\DeclareMathOperator{\supp}{supp}
\DeclareMathOperator{\Vol}{Vol}
\DeclareMathOperator{\sign}{sgn}
\DeclareMathOperator{\sgn}{sgn}
\DeclareMathOperator{\Hess}{Hess}
\newcommand{\jBra}[1]{\langle #1 \rangle}
\newcommand{\bbZ}{\mathbb{Z}}
\newcommand{\bbR}{\mathbb{R}}
\newcommand{\bbT}{\mathbb{T}}
\newcommand{\bbC}{\mathbb{C}}
\newcommand{\bbOne}{\mathds{1}}
\newcommand{\cS}{\mathcal{S}}
\newcommand{\cT}{\mathcal{T}}
\newcommand{\cR}{\mathcal{R}}
\newcommand{\cF}{\mathcal{F}}
\newcommand{\fs}{\mathfrak{s}}
\newcommand{\fhPi}{\frac{\pi}{2}}
\newcommand{\fC}{\mathfrak{C}}
\newcommand{\fL}{\mathfrak{L}}
\newcommand{\rmI}{\mathrm{I}}
\newcommand{\rmII}{\mathrm{II}}
\newcommand{\rmIII}{\mathrm{III}}
\newcommand{\rmIV}{\mathrm{IV}}
\newcommand{\SSCutoff}[1][\xi]{\rho(t^\frac{1}{5} #1)}
\newcommand{\mSS}{\mathit{SS}}
\newcommand{\discrAvg}{\Psi}
\newcommand{\linProp}[1][t]{e^{i#1\discrAvg}}
\newcommand{\invLinProp}[1][t]{e^{-i#1\discrAvg}}
\newcommand{\deles}{\nabla_{\eta,\sigma}}
\newcommand{\bigO}[1]{O\left({#1}\right)}
\newcommand{\tmkdv}{\text{mKdV}}
\newcommand{\phimKdV}{\phi_{\tmkdv}}
\newcommand{\tcts}{\text{cts}}
\newcommand{\fProjc}[1]{{Q^{\tcts}_{#1}}}
\newcommand{\Rncp}[1]{R_n^{c,{#1}}}
\newcommand{\nablaes}{\nabla_{\eta,\sigma}}
\newcommand{\stPhErr}{{\frac{1}{52}}}
\newcommand{\btext}[1]{\left\{\textup{#1}\right\}}
\newcommand{\bbetter}{\btext{better}}
\newcommand{\bsim}{\btext{similar terms}}
\newcommand{\bsime}{\btext{similar or easier terms}}
\newtheorem{thm}{Theorem}
\newtheorem{lemma}[thm]{Lemma}
\newtheorem{cor}[thm]{Corollary}
\newtheorem{rmk}{Remark}
\crefname{equation}{}{}
\title{Asymptotics for small data solutions of the Ablowitz-Ladik equation}
\author{Gavin Stewart}
\address{Department of Mathematics, Rutgers University, 110 Frelinghuysen Rd., Piscataway, NJ, 08854, USA}
\email{\href{mailto:gavin.stewart@rutgers.edu}{gavin.stewart@rutgers.edu}}
\begin{document}
\maketitle
\begin{abstract}
    We study the asymptotics for the Ablowitz-Ladik equation.  By taking appropriate continuum limits, it can be shown that the behavior of the equation near degenerate frequencies is well approximated by a complex modified Korteweg-de Vries equation.  Using this connection, we use the method of space-time resonances to derive a description of the modified scattering behavior of the Ablowitz-Ladik equation, which includes two regions where the solution behaves like a self-similar solution to the complex mKdV equation.
\end{abstract}

\section{Introduction\label{sec:intro}}

\subsection{Background}

We will study the small data asymptotics for Ablowitz-Ladik equation
\begin{equation}\label{eqn:AL-eqn}
    \left\{\begin{aligned}
    i\partial_t v_n + v_{n+1} - 2 v_n + v_{n-1} &= (v_{n+1} + v_{n-1}) |v_n|^2\\ 
    v_n(0) &= v^{*}_n
\end{aligned}\right.
\end{equation}
where $v_n(t) : \bbZ \times \bbR \to \mathbb{C}$.  This equation was first considered by Ablowitz and Ladik in~\cite{ablowitzNonlinearDifferentialdifferenceEquations1976} as a discretization of the cubic Schr\"odinger equation which retains the complete integrability of the continuum equation.  With respect to the non-standard Poisson brackets
\begin{equation*}	\begin{split} 
    \{v_m,\overline{v_n}\} = i \left(1 - \overline{v_n}v_m\right) \delta_{n,m}\\
        \{v_n,v_m\} = \{\overline{v_n}, \overline{v_m}\} = 0
\end{split}\end{equation*}
the equation has a Hamiltonian structure, with Hamiltonian
\begin{equation*}
    H(v_n, \overline{v_n}) =  \sum \overline{v_n}\left( v_{n+1} - v_{n-1}\right) - 2 \sum \log \left(1 - |v_n|^2 \right)
\end{equation*}

As a completely integrable equation, Ablowitz-Ladik has infinitely many conserved quantities.  A particularly useful one is $c_{-\infty}$, which plays a role similar to the mass:
\begin{equation}
    c_{-\infty} = \prod (1 - |v_n|^2)
\end{equation}
(see~\cite{ablowitzDiscreteContinuousNonlinear2004}).  

In principle, the integrability of~\eqref{eqn:AL-eqn} means the inverse scattering transform provides an explicit solution formula~\cite{ablowitzDiscreteContinuousNonlinear2004}.  Indeed, certain modified scattering results have been found, see~\cite{yamaneLongtimeAsymptoticsIntegrable2019,yamaneLongtimeAsymptoticsDefocusing2014,yamaneLongTimeAsymptoticsDefocusing2015,chenLongTimeAsymptoticBehavior2020,meisenLongtimeAsymptoticAnalysis2022a}.	Historically, the complexity of the calculations required means that the modified scattering can often only be shown if the initial data is very strongly localized.  There has been recent progress in lowering the regularity requirements~\cite{chenL2SobolevSpaceBijectivity2023,meisenLongtimeAsymptoticAnalysis2022a}.  However, to the best of the author's knowledge, the self-similar asymptotics for $|n| \approx 2t$ have not yet been proved for low regularity data.  Thus, it is desirable to find other methods to treat the long-time behavior of solutions.

The slow dispersive decay due to the discreteness of the problem makes it more challenging to prove asymptotic results.  Like the complex mKdV equation, the Ablowitz-Ladik equation has a sharp $\ell^1 \to \ell^\infty$ decay rate of $t^{-1/3}$\cite{stefanovAsymptoticBehaviourSmall2005}.  This means that nonlinear effects will tend to be larger in the discrete systems than in the continuous case.  The weaker linear decay may seem surprising, especially since solution to~\eqref{eqn:AL-eqn} converge solutions of the cubic Schr\"odinger equation under the appropriate continuum limit~\cite{killipContinuumLimitAblowitz2022a}.  The apparent contradiction can be reconciled by noting that taking this continuum limit corresponds to focusing on frequencies near $0$ (which \emph{do} decay like $t^{-1/2}$ for Ablowitz-Ladik).  The degenerate decay for Ablowitz-Ladik occurs at frequencies near $\pm \frac{\pi}{2}$, so it is important to understand the limiting behavior near these frequencies.  To obtain a continuum limit near frequency $\fhPi$, we let $u_n(t) = (i)^n e^{-2it} h W(h(n-2t), h^3t)$.  Then, $W$ satisfies
\begin{equation}\label{eqn:W-mKdV-eqn}
    \partial_t W + \frac{1}{3}\partial_x^3 W = 2 |W|^2 \partial_x W + O(h)
\end{equation}
To leading order,~\eqref{eqn:W-mKdV-eqn} is invariant under rescaling $h$, so if we (formally) send $h \to 0$, we expect that $u_n$ should behave like a solution to the complex mKdV equation~\eqref{eqn:W-mKdV-eqn} translated to the right by $2t$ and modulated by $i^ne^{-2it}$.  If we instead take $u_n(t) = (-i)^n e^{-2it} h W(h(n+2t), -h^3t)$, a similar manipulation shows that
$ u_n(t) \approx i^{n}e^{-2it} \tilde{W}(n + 2t,-t)$
for a different solution $\tilde{W}$ to~\eqref{eqn:W-mKdV-eqn}.  This relation to complex mKdV will allow us to use arguments similar to those from~\cite{stewartLongTimeDecay2021} to obtain asymptotics for~\eqref{eqn:AL-eqn}.
			
\subsection{Main result}			
		
Before stating our results, it will be helpful to re-write equation~\eqref{eqn:AL-eqn} in a form which will simplify both the statement of the results and some later arguments. We first apply a simple change of variables $u_n(t) = e^{2it} v_n(t)$, which eliminates factors of $e^{-2it}$ which would otherwise make our formulas more cumbersome.  We also choose to pose the equation with initial data given at time $t = 1$, since this simplifies certain statements about decay rates without greatly changing the problem (it is not difficult to evolve from $t = 0$ to $t = 1$).  Thus, we are left to consider the equation
\begin{equation}\label{eqn:AL-eqn-rotated}
	\left\{\begin{aligned}
	    i\partial_t u_n + u_{n+1} + u_{n-1} &= (u_{n+1} + u_{n-1}) |u_n|^2\\
	    u_n(1) &= e^{-i\Psi} u^*_n
    \end{aligned}\right.
\end{equation}
where
\begin{equation*}
    \Psi u_n = u_{n+1} + u_{n-1}
\end{equation*}
Let us reiterate that the differences between~\eqref{eqn:AL-eqn-rotated} and~\eqref{eqn:AL-eqn} are purely cosmetic, and that any asymptotic result concerning~\eqref{eqn:AL-eqn-rotated} transfers naturally to an asymptotic result for~\eqref{eqn:AL-eqn}.  With this in mind, we now state the main result of this chapter:
\begin{thm}\label{thm:main-thm-AL}
	There exists an $\epsilon_0$ such that for all $\epsilon < \epsilon_0$, if $\lVert \jBra{n} u^*_n \rVert_{\ell^2} \leq \epsilon$, then the solution $u_n$ to~\eqref{eqn:AL-eqn-rotated} exists for all $t \geq 1$.  Moreover, the solution has the following asymptotics:
	
	If $|n| - 2|t| \gtrsim t^{1/3}$, then $u_n$ decays rapidly as $|n| \to \infty$:
	\begin{equation}\label{eqn:main-thm-al-decay-rgn}
	    |u_n(t)| \lesssim t^{-1/3} \left(\frac{t^{1/3}}{|n| - 2|t|}\right)^{3/4}
	\end{equation}
		
	If $2|t| - |n| \gtrsim t^{1/3}$, then we have modified scattering:  that is, there exists a function $f_\infty(\xi)$ independent of $t$ such that
	\begin{equation}\label{eqn:main-thm-al-osc-rgn}\begin{split}
	    u_n(t) =& \sum_{r=1}^2 \frac{1}{\sqrt[4]{4t^2 - n^2}} \exp\left(-i(-1)^r\left( \sqrt{4t^2 - n^2} + \frac{\pi}{4} + \frac{i}{4} \int_1^t \frac{|\hat{u}(\xi_r,s)|^2}{s}\;ds\right)\right) f_\infty(\xi_r)\\ &+ O\left(\epsilon t^{-1/3} |t^{1/3}\cos \xi_r|^{-\frac{5}{9}}\right)
	\end{split}\end{equation}
	where $\xi_r$ is defined by $\sin \xi_r = \frac{n}{2t}$, $\sgn \cos \xi_r = (-1)^r$.
	
	If $|n \mp 2t| \lesssim t^{1/3}$, then we have self-similar behavior
	\begin{equation}\label{eqn:main-thm-al-ss-rgn}
	    u_n(t) = S^\pm(n \pm 2t, \pm t) + O(\epsilon t^{-2/5})
	\end{equation}
	where $S^\pm$ are self-similar solutions to the complex mKdV equation
	\begin{equation}\label{eqn:cmkdv}
	    \partial_t S^\pm + \frac{1}{3} \partial_x^3 S^\pm = |S^\pm|^2 \partial_x S^\pm
	\end{equation}
\end{thm}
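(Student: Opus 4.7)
The plan is to combine the space-time resonance method with the complex mKdV reduction described in the introduction. Introduce the linear profile $f_n(t) = (\invLinProp u)_n$ with Fourier transform $\hat f(\xi,t)$; Duhamel's formula on the Fourier side gives
\[
\partial_t \hat f(\xi,t) = \int e^{it\Phi(\xi,\eta,\sigma)}\, m(\xi,\eta,\sigma)\, \hat f(\eta)\overline{\hat f(\sigma)}\hat f(\xi - \eta + \sigma)\,d\eta\,d\sigma,
\]
with $\Phi(\xi,\eta,\sigma) = \discrAvg(\xi) - \discrAvg(\eta) + \discrAvg(\sigma) - \discrAvg(\xi-\eta+\sigma)$ and $\discrAvg(\xi) = 2\cos\xi$. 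The space-time resonant set, where both $\nablaes \Phi = 0$ and $\Phi = 0$, is the diagonal $\eta = \sigma = \xi$; on this diagonal the resonant part of the nonlinearity is precisely what drives the logarithmic phase correction in~\eqref{eqn:main-thm-al-osc-rgn}, while the vanishing of $\discrAvg''$ at $\xi = \pm\fhPi$ is the source of the self-similar region~\eqref{eqn:main-thm-al-ss-rgn}.

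Global existence follows from a bootstrap in a norm of the form $\sup_{t\geq 1}\|\jBra{n} f_n(t)\|_{\ell^2}$ together with a weighted $L^\infty$ bound on $\hat f(\xi,t)$ that encodes the expected phase correction, following the scheme of~\cite{stewartLongTimeDecay2021}. The pointwise asymptotics then come from inverting the Fourier transform and doing stationary phase on $u_n(t) = \int \hat f(\xi,t) e^{i(n\xi + t\discrAvg(\xi))}\,d\xi$, whose phase has derivative $n - 2t\sin\xi$. In the decay region $|n| - 2|t| \gtrsim t^{1/3}$ there is no real critical point and non-stationary phase against the $\ell^2$ control of $f$ yields~\eqref{eqn:main-thm-al-decay-rgn}. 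In the oscillatory region $2|t| - |n| \gtrsim t^{1/3}$ the two non-degenerate critical points $\xi_r$ with $\sin\xi_r = n/(2t)$ and $\sgn\cos\xi_r = (-1)^r$ produce the leading stationary-phase terms with amplitude $(4t^2 - n^2)^{-1/4}$; the logarithmic phase $\exp\bigl(\tfrac14 \int_1^t |\hat u(\xi_r,s)|^2\,ds/s\bigr)$ arises from the resonant contribution $\partial_t \hat f(\xi) \approx \tfrac{i}{2t}|\hat f(\xi)|^2\hat f(\xi)$ to the profile equation, integrated along the trajectory. In the self-similar region $|n \mp 2t| \lesssim t^{1/3}$ the two critical points coalesce at $\pm\fhPi$ and ordinary stationary phase fails; here I would localize $\hat f$ to a window of width $\sim t^{-1/3}$ around $\pm\fhPi$ and apply the ansatz $u_n(t) \approx i^{\pm n} h W^\pm(h(n\mp 2t), \mp h^3 t)$ with $h \sim t^{-1/3}$ to reduce the dynamics to the complex mKdV equation~\eqref{eqn:W-mKdV-eqn}. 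The attracting self-similar solution $S^\pm$ of~\eqref{eqn:cmkdv} is then selected by matching its far-field asymptotics to the stationary-phase expansion from the oscillatory region, which simultaneously identifies $f_\infty$ near $\pm\fhPi$.

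The hard part will be closing the bootstrap uniformly across the transition $|n \mp 2t| \sim t^{1/3}$, where the dispersive decay degrades from $t^{-1/2}$ to $t^{-1/3}$. The weighted $L^\infty$ norm on $\hat f$ must be tuned so that all three types of cubic interaction can be controlled with a single set of bounds: the resonant piece (driving the log phase), the purely non-resonant piece (integrable directly in time), and the space-resonant but time-non-resonant piece (handled by a normal form / integration by parts in $t$). A second delicate point is showing that the $O(h) = O(t^{-1/3})$ error in the mKdV approximation~\eqref{eqn:W-mKdV-eqn}, once integrated over the long time-scale of interest, contributes only $O(\epsilon t^{-2/5})$ to $u_n$; this is what pins down the precise exponents in the frequency cutoffs used to separate the three regions (in particular the constant $\stPhErr$ appearing later) and forces a careful balancing of approximation error against sharpness of frequency localization.
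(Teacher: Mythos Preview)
Your outline captures the general space-time resonance framework, but there is a genuine gap in the bootstrap strategy. You propose to close on $\sup_{t\geq 1}\|\jBra{n} f_n(t)\|_{\ell^2}$, but this quantity cannot stay bounded: the paper shows $\|n f_n\|_{\ell^2}$ grows like $t^{1/6}$, precisely because of the self-similar behavior near $\xi=\pm\fhPi$. More concretely, your stationary-phase analysis of $\partial_t\hat f$ at the degenerate frequencies only yields $\partial_t\hat f(\pm\fhPi,t)=O(\epsilon^3 t^{-1})$, which is not integrable and so does not give uniform control of $\hat f$ in $L^\infty$ for $t\gg\exp(C\epsilon^{-2})$. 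The coalescence of stationary points at $\xi=\pm\fhPi$ is not merely a nuisance in extracting asymptotics; it is a genuine obstruction to the bootstrap as you have stated it.

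The paper's resolution is not to use the mKdV ansatz only as a local approximation for reading off asymptotics, but to build it into the bootstrap itself. One defines an explicit approximate self-similar solution $\mSS_n(t;\alpha_+(t),\alpha_-(t))$, with modulation parameters $\alpha_\pm(t)=\hat f(\pm\fhPi,t)$ and a frequency cutoff at scale $t^{-1/5}$ (not $t^{-1/3}$), and decomposes $u_n=\mSS_n+w_n$. The weighted estimate is then proved for $g_n=\invLinProp w_n$, with $\|n g_n\|_{\ell^2}\lesssim\epsilon t^{1/10}$ allowed to grow slowly. The point is that $\hat g(\pm\fhPi,t)=0$ by construction, so $w_n$ has genuinely better decay at the degenerate frequencies; this is what upgrades the estimate on $\partial_t\alpha_\pm$ to $O(\epsilon^3 t^{-16/15})$, which \emph{is} integrable. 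The self-similar asymptotic~\eqref{eqn:main-thm-al-ss-rgn} then falls out because $w_n$ is lower order in the region $|n\mp 2t|\lesssim t^{1/3}$. Your proposal of matching $S^\pm$ to the oscillatory region a posteriori does not supply this mechanism for closing the estimates.
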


\subsection{Idea of the proof}

\subsubsection{Linear estimates}
We begin by considering estimates for the linear problem.  Suppose $u_n^L$ solves the linear equation
\begin{equation*}
    \left\{\begin{array}{l}
        i \partial_t u_n^L + u_{n+1}^L + u_{n-1}^L = 0\\
        u_n^L(0) = f_n
    \end{array}\right.
\end{equation*}
Then, taking the Fourier transform, we are lead to the solution formula
\begin{equation*}
    u_n^L(t) = e^{it\Psi} f_n = \frac{1}{\sqrt{2\pi}} \int_{\bbT} e^{-in \xi -2t \cos \xi} \hat{f}(\xi)\;d\xi
\end{equation*}
In particular, the second derivative of the phase vanishes at the degenerate frequencies $\xi =\pm \fhPi$, leading to a decay rate of $t^{-1/3}$ by the van der Korput lemma.  On the other hand, we also have the formula
\begin{equation*}
    u_{n+1}^L(t) + u_{n-1}^L(t) = \frac{2}{\sqrt{2\pi}} \int_{\bbT} \cos \xi e^{-in \xi -2t \cos \xi} \hat{f}(\xi)\;d\xi
\end{equation*}
with the vanishing of the $\cos \xi$ multiplier compensating for the degeneracy of the phase.  We will see in~\Cref{sec:lin-est} that this leads to the improved bilinear estimate
\begin{equation*}
    |(u_{n+1}^L + u_{n-1}^L) u_n^L| \lesssim t^{-1}
\end{equation*}
In particular, if we assume that $u_n$ satisfies the same sorts of estimates as $u_n^L$, then the nonlinear term in~\eqref{eqn:AL-eqn-rotated} decays like $t^{-1} |u_n|$, which is compatible with the long range corrections we see in~\Cref{thm:main-thm-AL}.  Let us define
$$f_n = \invLinProp u_n$$
If we want $u_n$ to decay like a linear solution, we need $f_n$ to be well-behaved.  We will see that it is enough to show that $\lVert f \rVert_X \lesssim \epsilon$, where $X$ is the space defined by the norm
\begin{equation}\label{eqn:X-norm-def}
    \lVert f_n \rVert_X = \sup_{t > 1} \left(\lVert \hat{f}(\xi,t) \rVert_{L^\infty_{\xi}} + t^{-1/6}\lVert n f_n \rVert_{\ell^2_n}\right)
\end{equation}

\subsubsection{Step 1: The stationary phase estimate}

We first consider the problem of proving that $\hat{f}$ is bounded.  Observe that $\hat{f}$ satisfies the equation
\begin{equation}\label{eqn:dt-f-hat}
    \partial_t \hat{f}(\xi,t) = \iint e^{it\phi(\xi,\eta,\sigma)}\cos(\xi - \eta - \sigma) {\hat f(\xi-\eta-\sigma,t)}\hat f(\eta, t) \overline{\hat f(-\sigma, t)}\;d\eta d\sigma
\end{equation}
where the variables $\xi, \eta, \sigma$ belong to the frequency space $\bbT = \bbR / 2\pi\bbZ$, and
\begin{equation}
    \phi(\xi,\eta,\sigma) = -\mathcal{F}\discrAvg(\xi) + \mathcal{F}\discrAvg(\eta) + \mathcal{F}\discrAvg(\xi - \eta - \sigma) - \mathcal{F}\discrAvg(-\sigma) = 8\sin \frac{\xi - \eta}{2} \cos \frac{\xi - \sigma}{2} \sin\frac{\eta + \sigma}{2}
\end{equation}
is the phase function, which describes how nonlinear interactions lead to the production of a frequency $\xi$ through the interactions of the frequencies $\xi-\eta-\sigma, \eta$ and $-\sigma$. The right-hand side of~\eqref{eqn:dt-f-hat} is an oscillatory integral, so we expect the leading order contributions to come from the stationary points of $\phi$ with respect to $\eta$ and $\sigma$.   A quick calculation shows that
\begin{equation}\label{eqn:phi-derivs}\begin{split}
    \partial_\xi \phi &=  4\sin \frac{\eta + \sigma}{2} \cos\left( \xi - \frac{\eta + \sigma}{2}\right)\\
    \partial_\eta \phi &= -4\cos\frac{\xi - \sigma}{2} \sin \left(\frac{\xi - \sigma}{2} - \eta\right)\\
    \partial_\sigma \phi &= -4 \sin \frac{\xi - \eta}{2} \cos\left(\frac{\xi - \eta}{2} - \sigma\right)
\end{split}\end{equation}
Holding $\xi \neq \pm \fhPi$ fixed, we see that in the $\eta$ and $\sigma$ variables $\phi(\xi,\eta,\sigma)$ has six stationary points given by
\begin{equation*}
\begin{array}{ll}
    P_1 = \left(\xi, -\xi\right) & P_2 = \left(\frac{\xi}{3} + \pi, \frac{\xi}{3}\right)\\
    P_3 = \left(\frac{\xi - \pi}{3}, \frac{\xi + 2\pi}{3}\right) & P_4 = \left(\frac{\xi + \pi}{3}, \frac{\xi - 2\pi}{3}\right)\\
    P_5 = \left(\pi-\xi, \xi + \pi\right) & P_6 = \left(\xi, \pi + \xi \right)
\end{array}
\end{equation*}
Thus, we have that
\begin{equation*}
    \partial_t \hat{f}(\xi,t) = \sum_{r = 1}^6 \frac{1}{t \sqrt{\det D^2 \phi(P_j)}} e^{i\frac{\pi}{4}\sign D^2 \phi(P_j) + it \phi(P_j)} \cos(\xi - \eta_j - \sigma_j) \hat{f}(\eta_j) \hat{f}(\xi - \eta_j - \sigma_j) \overline{\hat{f}(-\sigma_j)} + \btext{error}
\end{equation*}
The contributions from $P_5$ and $P_6$ cancel by symmetry.  The phases for the remaining stationary points are
\begin{equation*}\begin{split}
    \phi(P_1) =& 0\\
    \phi(P_2) =& -8 \sin^3 \left( \frac{\xi}{3} - \frac{\pi}{6}\right)\\
    \phi(P_3) =& 8 \cos^3 \frac{\xi}{3}\\
    \phi(P_2) =& -8 \sin^3 \left( \frac{\xi}{3} + \frac{\pi}{6}\right)
\end{split}
\end{equation*}
In particular, $|\phi(P_2)| \sim \left|\xi - \fhPi\right|^3$, $|\phi(P_4)| \sim \left| \xi + \fhPi\right|^3$, and $\phi(P_3)$ is nonvanishing over the entire frequency domain.  Thus, provided $\xi$ is at a distance at least $t^{-1/3}$ away from the degenerate frequencies, we expect that the oscillations in these terms will lead to addition cancellation,  preventing them from affecting the asymptotics.  Thus, only the point $P_1$ is asymptotically relevant, so we can write
\begin{equation*}
    \partial_t \hat{f}(\xi,t) = \frac{i \sgn(\cos \xi)}{4t}|\hat{f}(\xi,t)|^2 \hat{f}(\xi,t) + \btext{better terms}
\end{equation*}
Dropping all lower order terms leaves us with an ODE which can be explicitly integrated.  Doing so, we find that
\begin{equation*}
    \hat{f}(\xi,t) \approx \exp\left(\frac{i}{4} \int_1^t \frac{|\hat{f}(\xi,s)|^2}{s}\;ds\right) f_\infty(\xi)
\end{equation*}
which is compatible with the logarithmic phase shift given in~\Cref{thm:main-thm-AL}.

\subsubsection{Step 2: Behavior near the degenerate frequencies}

The above argument entirely breaks down near the frequencies $\xi = \pm \fhPi$.  There is no longer enough oscillatory cancellation for us to ignore the contribution from the points $P_2$ and $P_4$, and in addition the error term in the stationary phase approximation becomes unacceptably large due to the coalescence of the stationary points.  Thus, the above argument only gives us the bound
\begin{equation*}
    \partial_t \hat{f}(\xi,t) = O(\epsilon^3 t^{-1})
\end{equation*}
which is insufficient to control $\hat{f}$ for $t \gg \exp\left(C \epsilon^{-2}\right)$.  

To control $\hat{f}$ at these near-degenerate frequencies, we must understand and take advantage of the nonlinear behavior there.  First, we note that the problem in the above argument occurs at the degenerate stationary point $\xi = \eta = -\sigma = \pm \fhPi$.  Based on the group velocity relation, this corresponds to the region $n \approx \mp 2t$ in physical space where $u_n$ exhibits self-similar behavior.  Thus, we introduce the decomposition
\begin{equation*}
    u_n(t) = \mSS_n(t) + w_n(t)
\end{equation*}
where $\mSS_n(t)$ is the approximate self-similar solution defined by
\begin{equation}\label{eqn:mSS-def-intro}\begin{split}
    \widehat{\mSS}(\xi, t) =& \rho(t^{1/5} (\xi - \pi/2)) e^{-2i(\xi - \fhPi) t}\hat{\fs}(\xi - \pi/2,t;\hat{f}(\fhPi/2,t))\\
    &+ \rho(t^{1/5} (\xi + \pi/2)) e^{2i(\xi + \fhPi) t}\hat{\fs}(\xi + \pi/2,-t;\hat{f}(-\fhPi/2,t))
\end{split}\end{equation}
where $\fs(x,t;\alpha)$ is a self-similar solution to the complex mKdV equation~\eqref{eqn:cmkdv} with $\hat{\fs}(0,t;\alpha) = \alpha$ and $\rho$ is a smooth bump function supported in a small neighborhood of $0$.  Then, we can write
\begin{equation}\label{eqn:f-hat-degen-decomp}\begin{split}
    \partial_t \hat{f}\left(\pm \fhPi, t\right) =& \sum_{n \in \bbZ} e^{\mp i\fhPi n} \left(|u_n|^2 (u_{n+1} + u_{n-1})\right)\\
    =& \sum_{n \in \bbZ} e^{\mp i\frac{\pi}{2}n} |\mSS_n|^2(\mSS_{n+1} + \mSS_{n-1})\\
    &+ \sum_{n \in \bbZ} e^{\mp i \fhPi n} |u_n|^2 (w_{n+1} + w_{n-1})\\
    &+ \bsim
\end{split}\end{equation}
Now, for the first term, 
\begin{equation*}\begin{split}
    \sum_{n \in \bbZ} e^{\mp i\frac{\pi}{2}n} |\mSS_n|^2(\mSS_{n+1} + \mSS_{n-1}) =& \pm \int |\fs\left(x,t; \pm \hat{f}\left(\pm \fhPi,t\right)\right)|^2 \partial_x \fs\left(x,t;\left(\pm \fhPi,t\right)\right)\;dx + \bigO{t^{-6/5}}\\
    =& 0 + \bigO{t^{-6/5}}
\end{split}\end{equation*}
since $\fs$ is a self-similar solution to complex mKdV.  Thus, we will be able to control $\hat{f}(\pm \fhPi, t)$ provided we can control the second term of~\eqref{eqn:f-hat-degen-decomp}.

Since $\hat{w}$ vanishes at the degenerate frequencies $\xi = \pm \fhPi$, we expect $w_n$ to have better dispersive decay at the degenerate frequencies provided it also satisfies some weighted $\ell^2$ bound.  The final step in our argument is to show that this is indeed the case.

\subsubsection{Step 3: The weighted \texorpdfstring{$\ell^2$}{l2} bound for \texorpdfstring{$w_n$}{w\_n}}

Now, we use a Gr\"onwall argument to prove that $g_n = \invLinProp w_n$ satisfies the weighted $\ell^2$ bound
\begin{equation*}
    \lVert n g_n \rVert_{\ell^2} \lesssim \epsilon t^{\frac{1}{10}}
\end{equation*}
By Plancherel's theorem, the bound for $ng_n$ will follow if we can show $\partial_\xi \hat{g}$ grows at most like $t^{\frac{1}{10}}$ in $L^2(\bbT)$.  Observe that we have the bound
\begin{equation}\begin{split}
    \frac{1}{2}\partial_t \lVert \partial_\xi \hat g(\xi)\rVert_{L^2}^2 =& \Im\left\langle\partial_\xi \hat{g}, \frac{\partial_\xi}{\pi} \iint e^{it\phi(\xi,\eta,\sigma)}\cos(\xi - \eta - \sigma) \hat g(\xi-\eta-\sigma)\hat f(\eta) \overline{\hat f(-\sigma)}\;d\eta d\sigma \right\rangle_{L^2_\xi}\\
    &+ \left\langle\partial_\xi \hat{g},\hat{R}(\xi) \right\rangle_{L^2_\xi} + \bsim
\end{split}\end{equation}
where $\hat{R}(\xi)$ is the error term arising because $\mSS_n$ is not an exact solution to~\eqref{eqn:cmkdv}, and $\bsim$ denotes integral terms like the first one with one $\hat{f}$ factor replaced by $\widehat{\mSS}$.  We will focus on the integral term: the argument for the equation error is in many ways similar, but more involved.  This integral term can be written as
\begin{equation}\label{eqn:intro-duhamel-division}\begin{split}
    &\left\langle \partial_\xi g, \frac{1}{\pi} \iint e^{it\phi} \cos(\xi - \eta - \sigma) \partial_\xi \hat g(\xi-\eta-\sigma)\hat f(\eta) \overline{\hat f(-\sigma)}\;d\eta d\sigma \right\rangle\\
    \qquad&-t\left\langle \partial_\xi g, \iint e^{it\phi} \partial_\xi \phi \cos(\xi - \eta - \sigma) \hat g(\xi-\eta-\sigma)\hat f(\eta) \overline{\hat f(-\sigma)}\;d\eta d\sigma ds\right\rangle\\
    \qquad &+ \btext{better terms}
\end{split}\end{equation}
The first term initially appears problematic, since it can be written in physical space as
\begin{equation*}
    \langle Lw_n, (\Psi L w_n)|u_n|^2 \rangle_{\ell^2}
\end{equation*}
where
\begin{equation}\label{eqn:L-def}
    L := e^{it\Psi} n \invLinProp
\end{equation}
is the pushforward of the weight $n$ under $e^{it\Psi}$.  Sincethe $\Psi$ multiplier falls on a weighted term,  we cannot use the bilinear estimate directly to get extra decay.  However, by writing the inner product in the Fourier domain and using trigonometric identities it is possible, in essence, to move the portion of the $\cos$ multiplier that is responsible for the bilinear decay from $nw_n$ onto the $|u_n|^2$ term (see~\Cref{cor:bilin-sin-est}), recovering the $t^{-1}$ decay and letting us control this term.  Although this identity might appear miraculous, it should also be expected, since a similar identity holds for complex mKdV using integration by parts (see~\cite{stewartLongTimeDecay2021}).

To control the second term in~\eqref{eqn:intro-duhamel-division}, we use the method of space-time resonances.  This method was first developed developed by Germain, Masmoudi, and Shatah in~\cite{germainGlobalSolutions3D2008,germainGlobalSolution2D2012} and independently by Gustafson, Nakanishi, and Tsai in~\cite{gustafsonScatteringTheoryGross2009}, and has since been applied to a range of problems involving asymptotics for equations with a dispersive character~\cite{katoNewProofLongrange2011,ifrimGlobalBoundsCubic2015,germainGlobalExistenceEulerMaxwell2014,germainGlobalExistenceCoupled2011,ionescuGlobalSolutionsQuasilinear2014,ionescuEulerPoissonSystem2013,ionescuNonlinearFractionalSchrodinger2014,elgindiSharpDecayEstimates2015, ionescuGlobalAnalysisModel2016, ionescuGlobalRegularity2d2018}.  A related method, the method of testing with wave packets developed by Ifrim and Tataru, has also seen wide use~\cite{ifrimGlobalBoundsCubic2015,ifrimTwoDimensionalWater2016,harrop-griffithsLongTimeBehavior2016}.

The space-time resonance method can be seen as a generalization from the normal form method for PDEs introduced by Shatah~\cite{shatahNormalFormsQuadratic1985} and Klainerman's method of vector fields~\cite{klainermanUniformDecayEstimates1985}.  Since the second integral term in~\eqref{eqn:intro-duhamel-division} can be seen as an oscillatory integral in $s$, $\eta$, and $\sigma$, it is also natural to assume that the leading order contribution to $f_n$ occur for interactions where $\nabla_{s,\eta,\sigma}(s\phi) = 0$.  For this to happen, we must have
\begin{enumerate}
    \item $\phi(\xi,\eta,\sigma) = 0$, which corresponds to a resonances in the classical ODE sense of the word.
    \item $\nabla_{\eta,\sigma}\phi(\xi,\eta,\sigma) = 0$, which occurs when to the group velocities of the input frequencies $\xi - \eta - \sigma$, $\eta$, and $\sigma$ are all equal.
\end{enumerate}
Thus, the leading order contribution to the long time dynamics should come from interactions where $\phi = 0$ and $\nabla_{\eta,\sigma} \phi = 0$.  Not all of these interaction are asymptotically relevant: if the symbol $m(\xi,\eta,\sigma) = \cos(\xi - \eta - \sigma) \partial_\xi \phi$ co-vanishes with $\phi$ or $\nabla_{\eta,\sigma} \phi$, we can expect to get better decay.  This latter effect was first noted by Klainerman in~\cite{klainermanNullConditionGlobal1986} in the vector field context, where the symbol $m$ having these vanishing properties is called a null form.

Based on the above discussion, we define the set of time resonances $\mathcal{T}$ to be the set of all frequencies where $\phi$ vanishes to a higher order than $m$.  Similarly, we define the set of space resonances $\mathcal{S}$ to be the set of all frequencies where $\nabla_{\eta,\sigma} \phi$ vanishes to a higher order than $m$, and the set of space-time resonances, $\mathcal{R}$, to be the intersection of $\mathcal{T}$ and $\mathcal{S}$.  We calculate these sets to be
\begin{equation}\label{eqn:space-time-resonance-sets}\begin{split}
    \mathcal{T} &= \{\xi = \eta\} \cup \{\xi = \pi + \sigma\}\\
    \mathcal{S} &= \{\xi = 2\eta + \sigma\} \cap\{ \xi = \eta + 2\sigma + \pi\}\\
    \mathcal{R} &= \left\{\xi = \eta = \pi + \sigma = \pm \frac{\pi}{2}\right\}
\end{split}\end{equation}
with all equalities understood mod $2\pi$.  

Now, we introduce a partition of unity and write
\begin{equation*}
    \iint e^{is\phi} \partial_\xi \phi \cos(\xi - \eta - \sigma) \hat g(\xi-\eta-\sigma)\hat f(\eta) \overline{\hat f(-\sigma)}\;d\eta d\sigma = I_\mathcal{S} + I_\mathcal{T} + I_\mathcal{R}
\end{equation*}
where $I_\mathcal{S}$ contains is supported away from $\mathcal{S}$, $I_\mathcal{T}$ is supported away from $\mathcal{T}$, and $I_\mathcal{R}$ is supported within a distance $t^{-1/3}$ of $\mathcal{R}$.  The contribution from $I_\mathcal{R}$ can be controlled directly using the smallness of $\hat{f}$, $\hat{g}$, and $m$ near $\mathcal{R}$.  For $I_\mathcal{S},$ we can integrate by parts in $\eta$ and $\sigma$ to obtain terms of the form
\begin{equation*}\begin{split}
    I_\mathcal{S} = t^{-1}T(u_n, w_n, \overline{u_n}) + t^{-1}T(u_n, \Psi Lw_n, \overline{u_n}) + t^{-1}T(\overline{L\mSS_n}, \Psi w_n, \overline{u_n}) + \bsim
\end{split}\end{equation*}
where $T(\cdot,\cdot,\cdot)$ is a cubic pseudoproduct.  For the second pseudoproduct term, if we choose the support of $I_\mathcal{S}$ to be a small neighborhood of $\mathcal{T}$, then $|\xi - \eta - \sigma \pm \fhPi| \ll \max\{|\eta \pm \fhPi|,|\sigma \pm \fhPi|\}$, which effectively allows us to transfer the $\Psi$ multiplier from $Lw_n$ to one of the $u_n$ factors.  By adapting the argument for the bilinear bound $|u_n \Psi u_n| \lesssim t^{-1}$ to the pseudoproduct framework, we find that
\begin{equation*}
    \lVert T(u_n, \Psi Lw_n, \overline{u_n}) \rVert_{\ell^2} \lesssim \epsilon^2 t^{-1} \lVert ng_n \rVert_{\ell^2}
\end{equation*}
A similar argument applies for the first term, since the fact the $\hat{g}(\pm\fhPi) = 0$ gives us the Hardy-type inequality
\begin{equation*}
    \left\lVert \frac{1}{\Psi} g_n \right\rVert_{\ell^2} \lesssim \lVert n g_n \rVert_{\ell^2}
\end{equation*}
which lets us bound the first pseudoproduct term in effectively the same manner.  The final pseudoproduct term, which comes from writing $Lu_n = Lw_n + L\mSS_n$, can be controlled in a similar manner using decay estimates for the self-similar solution.  Thus, we obtain the bound
\begin{equation*}
    \int_1^t \langle Lw_n, I_\cR + I_\cS \rangle\;ds \lesssim \int_1^t \epsilon^2 s^{-1}\lVert ng_n \rVert_{\ell^2}^2\;ds
\end{equation*}
which is compatible with the slow growth of $\lVert ng_n \rVert_{\ell^2}$.

For $I_\cT$, we integrate by parts in time to obtain
\begin{equation*}\begin{split}
    \int_1^t \Re\langle Lw_n, I_\cT\rangle\;ds =& \int_1^t \Re\langle Lw_n, T(u_n, w_n, \overline{u_n})\rangle \;ds + \int_1^t s\Re\langle Lw_n, T(u_n, e^{it\Psi} \partial_s g_n, \overline{u_n})\rangle\;ds\\
    &+ \int_1^t s\Re\langle L\partial_s w_n, T(u_n, w_n, \overline{u_n})\rangle\;ds + \bsim
\end{split}\end{equation*}
The first term is similar to the pseudoproduct terms considered for $I_\cS$.  For the second term, we observe that
\begin{equation*}
    e^{it\Psi} \partial_s g_n = |u_n|^2 \Psi w_n + R_n + \bsim
\end{equation*}
A careful analysis also shows that
\begin{equation*}
    \lVert T(u_n, |u_n|^2 \Psi w_n, \overline{u_n}) \rVert_{\ell^2} \lesssim s^{-2} \lVert ng_n \rVert_{\ell^2}
\end{equation*}
and a more in-depth analysis shows that we can also control the term containing $R_n$.  Finally, for the term containing $\partial_s Lw_n$, we expand out the derivative using the equation.  The majority of the terms can be shown to decay rapidly.  The only remaining term is
\begin{equation*}
    J = \int_1^t s^2 \Re \langle I_\cT, T(u_n, w_n, \overline{u_n}) + \bsim\rangle\;ds
\end{equation*}
Integrating by parts in time, we get terms
\begin{equation}\begin{split}
    J =& \left.s^2 \Re\langle T(u_n, w_n, \overline{u_n}) + \bsim, T(u_n, w_n, \overline{u_n}) + \bsim \rangle \right|_{s=1}^{s=t}\\
    &- \int_1^t 2s \Re\langle T(u_n, w_n, \overline{u_n}) + \bsim, T(u_n, w_n, \overline{u_n}) + \bsim \rangle\;ds\\
    &- \int_1^t s^2 \Re\langle T(u_n, w_n, \overline{u_n}) + \bsim, \partial_s (T(u_n, w_n, \overline{u_n}) + \bsim )\rangle\;ds
\end{split}\label{eqn:J-ident}\end{equation}
In the last term, we recognize
\begin{equation*}
    \partial_s (T(u_n, w_n, \overline{u_n}) + \bsim ) = I_\cT + T(u_n, \partial_s w_n, \overline{u_n}) + \bsim
\end{equation*}
Notice that the first term gives us a term $-J$ on the right hand side of~\eqref{eqn:J-ident}.  Thus, moving this $-J$ term to the left-hand, we obtain the bound
\begin{equation*}
    J \lesssim \int_1^t \epsilon^2 s^{-1} \lVert ng_n \rVert_{L^2} \;ds + \epsilon^2 t^{1/5}
\end{equation*}
which gives the slow growth of $\lVert n g_n \rVert_{\ell^2}$.

\subsubsection{Step 4: Verifying the asymptotics}

Finally, we show that the previous estimates are enough to obtain the asymptotics from~\Cref{thm:main-thm-AL}. Observe that the bound on $ng_n$ together with the bound
\begin{equation*}
    \lVert n \invLinProp \mSS_n \rVert_{\ell^2} \lesssim \epsilon t^{1/6}
\end{equation*}
imply that 
\begin{equation*}
    \lVert nf_n \rVert_{\ell^2} \lesssim \epsilon t^{1/6}
\end{equation*}
so $\lVert f_n \rVert_X \lesssim \epsilon$.  This is already sufficient to prove that asymptotics in the region $|n| - 2|t| \gg t^{1/3}$, since the linear solution $u_n^L$ also has these asymptotics.  Similarly, the asymptotics in the region $2|t| - |n| \gg t^{1/3}$ follow from the linear estimates once we take into account the logarithmic phase correction for $\hat{f}$.  Finally, in the region $|n| - 2|t| = O(t^{1/3})$, we see that $w_n$ is of lower order than $\mSS_n$, which combined with the integrable decay of $\partial_t \hat{f}(\pm\fhPi,t)$ yields~\eqref{eqn:main-thm-al-ss-rgn}.

\subsection{Organization of the paper}

The plan for the rest of this paper is as follows.  We begin by defining notation and giving some basic properties of pseudoproduct operators in~\Cref{sec:notation-sec}.   Next, in~\Cref{sec:lin-est}, we present some refined estimates for the linear discrete Schr\"odinger equation.  We then give a definition for and some properties of the approximate self-similar solution in~\Cref{sec:approx-ss}. In~\Cref{sec:prof-red}, we show how the main theorem follows from a bootstrap argument involving bounds on $\lVert ng_n\rVert_{\ell^2}$, $\hat{f}$, and $\partial_t \hat{u}(\pm \fhPi,t)$.  Finally, we verify that the bootstrap holds by proving the weighted $\ell^2$ estimate for $g_n$ in~\Cref{sec:n-ell2}, the bounds for the modulation parameters $\partial_t \hat{u}(\pm \fhPi,t)$ in~\Cref{sec:degen-freq-decay} and the logarithmic phase rotation for $\hat{f}$ in~\Cref{sec:L-infty}.

\section{Preliminaries\label{sec:notation-sec}}

\subsection{Conventions and notation}

We consider the Fourier transform $\mathcal{F} : \ell^2(\bbZ) \to L^2(\bbT)$ given by
\begin{equation}
    \mathcal{F}(f_n)(\xi) := \hat{f}(\xi) = \frac{1}{\sqrt{2\pi}} \sum_{n \in \bbZ} f_n e^{in\xi}  
\end{equation}
where $\bbT = \bbR / (2\pi \bbZ)$ is the torus.  The inverse Fourier transform is given by
\begin{equation}
    (\mathcal{F}^{-1} g)_n = \check{g}_n = \frac{1}{\sqrt{2\pi}} \int_{\bbT} g(\xi) e^{-in\xi}\;d\xi
\end{equation}
We now define the Littlewood-Paley multipliers.  Let $\psi_j(x) = \psi(2^{-j}x)$ be a dyadic partition of unity, where $\psi: \bbR \to [0,1]$ is a smooth bump function supported on $\{\frac{1}{2} \leq |x| \leq 2\}$ such that
    \begin{equation*}
        \sum_{j =-\infty}^\infty \psi(2^{-j}x) = 1 \qquad\qquad \text{for all } x \in \bbR \setminus \{0\}
    \end{equation*}  
    Let us also consider a smooth $2\pi$-periodic function $\tilde{\psi}:\bbR \to [0,1]$ which is equal to $1$ for $|x| \leq 1$, vanishes for $|x- \pi| \leq 1$, and satisfies
    \begin{equation*}
        \tilde{\psi}(x) + \tilde{\psi}(x+\pi) = 1 \qquad\qquad \text{for all } x \in \bbR
    \end{equation*}
Using these functions, we define define the functions
\begin{equation}\label{eqn:tilde-psi-def}
    \tilde{\psi}_j(\xi) = \begin{cases}
                    \sum_{n \in \bbZ} \psi_j(\xi - 2\pi n) & j < 0\\
                    \tilde{\psi}(\xi) - \sum_{j < 0} \tilde{\psi}^{\pm}_j(\xi) & j = 0
        \end{cases}
\end{equation}
For a real number $a$, we also define
\begin{equation*}
    \tilde{\psi}_{\leq a} = \sum_{\ell \leq a} \tilde{\psi}_\ell
\end{equation*}
and let
\begin{equation*}
    \tilde{\psi}^{[a]}_j(\xi) = \begin{cases}
        \tilde{\psi}_j(\xi) & 2^{j+1} > a\\
        \tilde{\psi}_{\leq j}(\xi) & 2^j < a \leq 2^{j+1}\\
        0 & \text{else}
    \end{cases}
\end{equation*}

Since the $\tilde{\psi}_j$ are $2\pi$-periodic, so we can naturally consider them as functions on the torus.  Thus, we can define the Fourier projectors
\begin{equation*}
    Q^\pm_j = \begin{cases}
        \tilde{\psi}_j(D \mp \fhPi) & 2^j > t^{-1/3}\\
        \tilde{\psi}_{\leq j}(D \mp \fhPi) & 2^{j} \leq t^{-1/3} < 2^{j+1}\\
        0 & \text{else}
    \end{cases}
\end{equation*}
which act on sequences.  We will also make use of Littlewood-Paley projectors on $\bbR$ when we are discussing the approximately self-similar solutions.  These are given by
\begin{equation*}
    Q_j^\tcts =
    Q^\pm_j = \begin{cases}
        {\psi}_j(D) & 2^j > t^{-1/3}\\
        {\psi}_{\leq j}(D) & 2^{j} \leq t^{-1/3} < 2^{j+1}\\
        0 & \text{else}
    \end{cases}
\end{equation*}
We define the projectors $Q^\pm_{\leq j}$, $Q^\pm_{<j}$, and so on in the natural way.  In working with dispersive estimates, it will be useful to augment these frequency projectors with physical space multipliers.  We begin by defining $\chi^\pm_n(t)$ to be a partition of unity on $\bbZ$ such that $\chi_n^\pm(t) = 1$ for $\mp n < t$ and $\chi_n^\pm(t) = 0$ otherwise.  We also define $\chi^\tcts_k(x;t)$ to be the continuous bump function defined by
\begin{equation*}
    \chi_k^\tcts(x;t) = \begin{cases}
        \psi(x/(t2^{2k})) & 2^k > t^{-1/3}\\
        \sum_{2^\ell \leq t^{-1/3}} \psi(x/(t2^{2\ell})) & 2^{k} \leq t^{-1/3} < 2^{k+1}\\
        0 & 2^{k+1} \leq t^{-1/3}
    \end{cases}
\end{equation*}
Then, we define
\begin{equation*}
    \chi^\pm_{k;n}(t) = \tilde{\chi}^\pm_n(t) \chi^\tcts(n \pm 2t;t)
\end{equation*}
Henceforth, we will drop the index $n$ and the argument $t$ whenever they are clear from the context.  We also define the shorthand
\begin{equation*}\begin{split}
    \chi_{[k \pm N]}^\pm = \sum_{\ell = k -M}^{k+M} \chi_{\ell}^\pm\\
    Q_{[j \pm N]}^\pm = \sum_{\ell = j -M}^{j+M} Q_{\ell}^\pm
\end{split}\end{equation*}
    
\subsection{Multilinear harmonic analysis}

We define the pseudoproduct operator with symbol $M$ to be the operator by
\begin{equation*}
    T_M(f_n,g_n,h_n) = \mathcal{F}^{-1} \iint M(\xi, \eta, \sigma) \hat f(\xi - \eta - \sigma) \hat g(\eta) \hat h(\sigma)\;d\eta d\sigma
\end{equation*}
for $M: \bbT^3 \to \bbC$.  As the following lemma shows, these operators will have the same boundedness properties as those given by the H\"{o}lder inequality, provided $\check{M} \in \ell^1$.

\begin{lemma}\label{thm:multiplier-lemma}
    If $\check{M} \in \ell^1(\bbZ^3)$, then $T_M : \ell^{p_1} \times \ell^{p_2} \times \ell^{p_3} \to \ell^{q}$ provided $\frac{1}{p_1} + \frac{1}{p_2} + \frac{1}{p_3} = \frac{1}{q}$, $1 \leq p_1, p_2, p_3, q \leq \infty$.  Moreover, 
    \begin{equation}
        \lVert T_M(f_n,g_n,h_n) \rVert_{L^q} \leq \lVert \check{M}\rVert_{\ell^1(\bbZ^3)}\lVert f_n \rVert_{\ell^{p_1}}\lVert g_n \rVert_{\ell^{p_2}}\lVert h_n \rVert_{\ell^{p_3}}
    \end{equation}
\end{lemma}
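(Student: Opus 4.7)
The plan is to express $T_M$ in physical space as an $\ell^1$-weighted superposition of pointwise products of translates of $f$, $g$, and $h$, at which point the estimate reduces to the triangle inequality together with a three-term Hölder. Since $\check{M}\in\ell^1(\bbZ^3)\subset\ell^2(\bbZ^3)$, the symbol admits the absolutely convergent Fourier representation
\begin{equation*}
    M(\xi,\eta,\sigma)=c_0\sum_{(a,b,c)\in\bbZ^3}\check{M}_{a,b,c}\,e^{i(a\xi+b\eta+c\sigma)}
\end{equation*}
where $c_0$ is the constant fixed by the Fourier conventions of the paper.

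I would substitute this series into the definition of $T_M(f,g,h)_n$ and interchange the summation with the three frequency integrals (justified by Fubini, the $\ell^1$ hypothesis, and the pointwise absolute bounds on $\hat{f}\,\hat{g}\,\hat{h}$ after first treating the case of finitely supported $\check{M}$). After the substitution $\zeta=\xi-\eta-\sigma$, the exponential factors as $e^{i(a-n)\zeta}\,e^{i(a-n+b)\eta}\,e^{i(a-n+c)\sigma}$, so the triple integral separates into a product of three one-dimensional integrals, each of which is the value of the relevant sequence at a shifted index via the one-dimensional inverse Fourier formula. Collecting constants (which the normalization is arranged to cancel) produces the representation
\begin{equation*}
    T_M(f,g,h)_n=\sum_{(a,b,c)\in\bbZ^3}\check{M}_{a,b,c}\,f_{n-a}\,g_{n-a-b}\,h_{n-a-c}
\end{equation*}
expressing $T_M(f,g,h)$ as an $\ell^1(\bbZ^3)$-weighted combination of pointwise products of translates of $f$, $g$, and $h$.

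With this identity in hand, the conclusion follows immediately: by Minkowski's inequality in $\ell^q_n$, followed by the three-factor Hölder inequality and the translation invariance of the $\ell^p$ norms,
\begin{equation*}
    \lVert T_M(f,g,h)\rVert_{\ell^q}\leq\sum_{(a,b,c)}|\check{M}_{a,b,c}|\,\lVert f_{n-a}\,g_{n-a-b}\,h_{n-a-c}\rVert_{\ell^q_n}\leq\lVert\check{M}\rVert_{\ell^1}\lVert f\rVert_{\ell^{p_1}}\lVert g\rVert_{\ell^{p_2}}\lVert h\rVert_{\ell^{p_3}}
\end{equation*}
where Hölder requires precisely the hypothesis $\frac{1}{p_1}+\frac{1}{p_2}+\frac{1}{p_3}=\frac{1}{q}$. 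There is no real obstacle in this argument; the only minor technical point is justifying the exchange of summation and integration, which is handled by first establishing the physical-space identity for symbols with finitely supported $\check{M}$ (where every step is a finite sum) and then passing to the limit by $\ell^1$-density, using the already-established bound to control the approximation error.
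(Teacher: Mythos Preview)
Your proof is correct and follows essentially the same strategy as the paper: derive a physical-space representation of $T_M$ as an $\ell^1(\bbZ^3)$-weighted sum of pointwise products of translates of $f$, $g$, $h$, and then apply Minkowski plus the three-factor H\"older inequality. The only cosmetic difference is that the paper expands the \emph{inputs} $\hat f,\hat g,\hat h$ as Fourier series and integrates, while you expand the \emph{symbol} $M$ as a Fourier series and integrate; both routes yield the same convolution-type identity (up to relabeling indices).
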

\begin{proof}
    Using Fourier inversion, we find that
    \begin{equation}
        \widehat{T}_M(f_n,g_n,h_n)(\xi) = \iint M(\xi, \eta, \sigma) \sum_{n_1=-\infty}^\infty f_{n_1} e^{-in_1(\xi - \eta - \sigma)}\sum_{n_2=-\infty}^\infty  g_{n_2} e^{-in_2\eta}\sum_{n_3=-\infty}^\infty  h_{n_3}e^{-in_3\sigma}\;d\eta d\sigma
    \end{equation}
    Interchanging the order of summation and integration, we find that
    \begin{equation}
        \widehat{T}_M(f_n,g_n,h_n)(\xi) = \sum_{n_1, n_2, n_3 \in \bbZ} f_{n_1}g_{n_2}h_{n_3} e^{-i\xi n_1} \iint M(\xi, \eta, \sigma)e^{-i\eta(n_2 - n_1)} e^{-i\sigma(n_3 - n_1)} \;d\eta d\sigma
    \end{equation}
    If we now invert the Fourier transform in $\xi$, we get
    \begin{equation}\label{eqn:pseudoprod-phys-def}\begin{split}
        T_M(f_n,g_n,h_n)_m &= \sum_{n_1, n_2, n_3 \in \bbZ} f_{n_1}g_{n_2}h_{n_3}  \iiint e^{-i\xi (n_1 + m)} M(\xi, \eta, \sigma)e^{-i\eta(n_2 - n_1)} e^{i\sigma(n_3 - n_1)} \;d\eta d\sigma d\xi \\
        &= \sum_{n_1, n_2, n_3 \in \bbZ} f_{n_1}g_{n_2}h_{n_3} \check{M}_{n_1 + m, n_2 - n_1, n_3 - n_1}\\
        &= \sum_{k_1, k_2, k_3 \in \bbZ} f_{k_1 - m}g_{k_2 + k_1 - m}h_{k_3 + k_1 - m} \check{M}_{k_1, k_2, k_3}
    \end{split}\end{equation}
    The result now follows by taking the $\ell^q$ norm in $m$ and applying the Minkowski and H\"{o}lder inequalities:
    \begin{align*}
        \left\lVert T_M(f_n,g_n,h_n) \right\rVert_{\ell^q} =& \left\lVert \sum_{k_1, k_2, k_3 \in \bbZ} f_{k_1 - m}g_{k_2 + k_1 - m}h_{k_3 + k_1 - m} \check{M}_{k_1, k_2, k_3} \right\rVert_{\ell^q_m}\\
        \leq& \sum_{k_1, k_2, k_3 \in \bbZ} |\check{M}_{k_1, k_2, k_3}| \left\lVert f_{k_1 - m}g_{k_2 + k_1 - m}h_{k_3 + k_1 - m}  \right\rVert_{\ell^q_m}\\
        \leq& \lVert \check{M} \rVert_{\ell^1(\bbZ^3)} \lVert f \rVert_{\ell^{p_1}} \lVert g \rVert_{\ell^{p_2}}\lVert h \rVert_{\ell^{p_3}}\qedhere
    \end{align*}
\end{proof}
\begin{rmk}\label{rmk:freq-loc-symbol-bds}
    In our analysis, we will often consider symbols $m$ which are compactly supported in a region of frequency space with volume $O(2^{j + k + \ell})$ and satisfy the symbol bounds
    \begin{equation*}
        |\partial_\xi^a \partial_\eta^b \partial_\sigma^c m(\xi,\eta,\sigma)| \lesssim_{a,b,c} A 2^{-aj - bk - c\ell}
    \end{equation*}
    with an implicit constant independent of $m$.  For such symbols,
    \begin{equation}\label{eqn:m-check-decay-bds}
        \left|\check{m}_{n,k,r}\right| \lesssim A \frac{2^j}{(1 + |2^{j}n|)^N}\frac{2^{k}}{(1 + |2^{\ell}m|)^N}\frac{2^{\ell}}{(1 + |2^{\ell}r|)^N}
    \end{equation}
    which shows that $m$ satisfies the hypotheses of~\Cref{thm:multiplier-lemma}.
\end{rmk}

In general, pseudoproducts are not local operators.  However, it can be shown that (at least after localizing in frequency) pseudoproducts with smooth symbols are \emph{pseudolocal}, which roughly speaking means that they are local up to rapidly decaying error terms.
\begin{lemma}\label{thm:pseudolocality-lemma}
    Suppose that $f_n,g_n,$ and $h_n$ are functions such that $\supp f_n$ and $\supp h_n$ separated by a distance $R$.  Let $m_{j,k,\ell}$ be a symbol supported on a region of size $O(2^{j+k+\ell})$ and satisfying the symbol bounds
\begin{equation*}
    |\partial_{\xi}^a \partial_\eta^b \partial_\sigma^c m(\xi,\eta,\sigma)| \lesssim_{a,b,c} 2^{-aj - bk - c\ell}
\end{equation*}
  Then, for $\frac{1}{p_1} + \frac{1}{p_2} + \frac{1}{p_3} = \frac{1}{p}$, we have
\begin{equation}\label{eqn:pseudoloc-1}
    \lVert T_{m_{j,k,\ell}}(f_n,g_n,h_n)  \rVert_{\ell^p} \lesssim_N \jBra{2^{\min(j,k,\ell)} R}^{-N} \lVert f \rVert_{\ell^{p_1}} \lVert g \rVert_{\ell^{p_2}} \lVert h \rVert_{\ell^{p_3}}
\end{equation}
Similarly, if $\supp k_n$ and $\supp f_n$ are separated by a distance $R$, then
\begin{equation}\label{eqn:pseudoloc-2}
    \lVert k_nT_{m_{j,k,\ell}}(f_n,g_n,h_n)  \rVert_{\ell^p} \lesssim_N \jBra{2^{\min(j,k,\ell)} R}^{-N} \lVert f \rVert_{\ell^{p_1}} \lVert g \rVert_{\ell^{p_2}} \lVert h \rVert_{\ell^{p_3}} \lVert k \rVert_{\ell^{p_4}}
\end{equation}
for all $p, p_1, p_2, p_3$ such that $\frac{1}{p_1} + \frac{1}{p_2} + \frac{1}{p_3} + \frac{1}{p_4} = \frac{1}{p}$, and analogous results apply if we interchange the roles of $f_n$, $g_n$, and $h_n$.
\end{lemma}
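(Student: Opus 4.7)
The plan is to work directly in physical space using the representation~\eqref{eqn:pseudoprod-phys-def} together with the pointwise kernel decay~\eqref{eqn:m-check-decay-bds} from~\Cref{rmk:freq-loc-symbol-bds}. The key observation is that the separation hypothesis forces one of the three kernel arguments of $\check{m}$ to have magnitude at least $R$; combined with the rapid decay of $\check m$ in that variable, this produces the prefactor $\langle 2^{\min(j,k,\ell)}R\rangle^{-N}$ while leaving behind a remainder still controlled by the same $\ell^1$ bound used to prove~\Cref{thm:multiplier-lemma}.

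For~\eqref{eqn:pseudoloc-1}, I would start from the physical-space formula and note that any nonzero summand requires $n_1\in\supp f$ and $n_3\in\supp h$, so the kernel argument tracking the difference $n_3-n_1$ has magnitude at least $R$ by hypothesis. Denoting this argument by $k_3$, I would split
\[
\frac{2^\ell}{(1+|2^\ell k_3|)^{2N}} \leq \langle 2^\ell R\rangle^{-N}\cdot\frac{2^\ell}{(1+|2^\ell k_3|)^{N}},
\]
pulling the factor $\langle 2^\ell R\rangle^{-N}$ outside and leaving a residual kernel whose $\ell^1$ norm is still bounded uniformly in $j,k,\ell,R$. The Minkowski--H\"{o}lder argument from the proof of~\Cref{thm:multiplier-lemma} then closes the bound, and~\eqref{eqn:pseudoloc-1} follows since $2^{\min(j,k,\ell)}\leq 2^\ell$ and $\langle\cdot\rangle^{-N}$ is decreasing.

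For~\eqref{eqn:pseudoloc-2}, the only change is that multiplying the output by $k_M$ and using the separation of $\supp k$ and $\supp f$ now forces the first kernel argument, which tracks the difference $M - n_1$, to have magnitude at least $R$; extracting the prefactor from the $2^j$-scale decay in this variable yields $\langle 2^j R\rangle^{-N}\leq\langle 2^{\min(j,k,\ell)}R\rangle^{-N}$, and the additional $\ell^{p_4}$ factor for $k$ is absorbed by a four-factor H\"older at the end. The analogous statements obtained by permuting the roles of $f,g,h$ follow by the same reasoning: if the separated pair is the ``$g$--$h$'' pair, for instance, then $|k_2-k_3|\geq R$, so at least one of $|k_2|,|k_3|$ is $\geq R/2$, and splitting into two regions produces the factor $\langle 2^{\min(k,\ell)}R\rangle^{-N}\leq\langle 2^{\min(j,k,\ell)}R\rangle^{-N}$.

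The main obstacle is bookkeeping rather than anything conceptual: one must choose the exponent in~\eqref{eqn:m-check-decay-bds} at least twice as large as the desired output exponent $N$ so that the residual kernel remains $\ell^1$-summable after the splitting, and track signs carefully in passing between the physical-space formula and the kernel arguments to make sure the separation condition lands on the intended kernel variable.
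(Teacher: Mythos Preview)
Your proposal is correct and follows essentially the same approach as the paper: use the physical-space representation~\eqref{eqn:pseudoprod-phys-def}, observe that the separation hypothesis forces the relevant kernel argument of $\check m$ to be at least $R$, and extract the $\langle 2^{\min(j,k,\ell)}R\rangle^{-N}$ factor from the decay bound~\eqref{eqn:m-check-decay-bds} before applying Minkowski--H\"older as in \Cref{thm:multiplier-lemma}. Your explicit splitting of the kernel decay and your treatment of the permuted cases are slightly more detailed than the paper's version, which simply restricts the sum to $|k_3|\geq R$ and reads off the bound, but the substance is identical.
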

\begin{proof}
    We will focus on proving~\eqref{eqn:pseudoloc-1}: the argument for~\eqref{eqn:pseudoloc-2} is similar.  Using the support assumptions on $f_n$ and $h_n$ and~\eqref{eqn:pseudoprod-phys-def}, we can write
    \begin{equation*}\begin{split}
        T_M(f_n,g_n,h_n)_m =& \sum_{\substack{k_1, k_2, k_3 \in \bbZ\\ |k_3| \geq R}} f_{k_1 - m}g_{k_2 + k_1 - m}h_{k_3 + k_1 - m} \check{M}_{k_1, k_2, k_3}
    \end{split}\end{equation*}
    Thus, taking an $\ell^p$ norm and using the bounds from~\eqref{eqn:m-check-decay-bds}, we see that
    \begin{equation*}\begin{split}
        \lVert T_M(f_n,g_n,h_n) \rVert_{\ell^p} \lesssim_N&  \sum_{\substack{n_1, n_2\\ |n_3| \geq R}} \lVert f_{n_1 - m}g_{n_2 + n_1 - m}h_{n_3 + n_1 - m}\rVert_{\ell^p_m} 2^{j+k+\ell} \jBra{2^j n_1}^{-10}\jBra{2^k n_2}^{-10}\jBra{2^\ell n_3}^{-10-N}\\
        \lesssim_N& \jBra{2^{\ell}R}^{-N} \lVert f \rVert_{\ell^{p_1}} \lVert g \rVert_{\ell^{p_2}} \lVert h \rVert_{\ell^{p_3}}\qedhere
    \end{split}\end{equation*}
\end{proof}

    A similar pseudolocality estimate also holds the Littlewood-Paley projectors:
    \begin{lemma}\label{lem:LP-pseudoloc-lemma}
        Suppose $f_n$ and $g_n$ are functions whose supports are separated by a distance $R$.  Then,
        \begin{equation*}
            \lVert f P_{\leq j} g \rVert_{\ell^p} + \lVert f P_{j} g \rVert_{\ell^p} + \lVert f P_{> j} g \rVert_{\ell^p} \lesssim_N \jBra{2^j R}^{-N}\lVert f \rVert_{\ell^{p_1}}\lVert f \rVert_{\ell^{p_2}}
        \end{equation*}
        for $\frac{1}{p} = \frac{1}{p_1} + \frac{1}{p_2}$.
    \end{lemma}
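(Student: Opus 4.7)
The plan is to exploit the fact that each of $P_{\leq j}, P_j$ is a convolution operator whose kernel decays rapidly on scale $2^{-j}$, and then convert the separation of supports into a factor of $\jBra{2^j R}^{-N}$ by a standard kernel-splitting trick.

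First, I would record the kernel estimates. If $K_{\leq j}$ and $K_j$ denote the convolution kernels of $P_{\leq j}$ and $P_j$ (so that $(P_j g)_m = \sum_k K_j(m-k) g_k$), then integrating by parts $N$ times in the formula $K_j(n) = \frac{1}{2\pi}\int_{\bbT} \tilde\psi_j(\xi) e^{-in\xi}\,d\xi$ and using the symbol bounds $|\partial_\xi^N \tilde\psi_j| \lesssim 2^{-jN}$ together with the fact that $\tilde\psi_j$ is supported on a set of measure $O(2^j)$, I obtain
\begin{equation*}
    |K_j(n)| + |K_{\leq j}(n)| \lesssim_N 2^j \jBra{2^j n}^{-N}
\end{equation*}
for every $N \geq 0$. (For the shifted projectors used in the paper, the kernel only acquires a phase factor $e^{\pm i \pi n/2}$ which does not affect the bound.)

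For the $P_j$ bound, the separation of supports means that $(f P_j g)_m = f_m \sum_{|m-k|\geq R} K_j(m-k) g_k$. When $|m-k| \geq R$, we have $\jBra{2^j(m-k)} \geq \jBra{2^j R}$, and so the kernel bound with exponent $2N$ can be split as
\begin{equation*}
    |K_j(m-k)| \lesssim 2^j \jBra{2^j R}^{-N} \jBra{2^j (m-k)}^{-N}
\end{equation*}
Setting $\tilde K_j(n) := 2^j \jBra{2^j n}^{-N}$, which has $\ell^1$ norm bounded uniformly in $j$ once $N \geq 2$, I obtain the pointwise bound
\begin{equation*}
    |(f P_j g)_m| \lesssim \jBra{2^j R}^{-N} |f_m|\, (\tilde K_j * |g|)_m
\end{equation*}
H\"older's inequality followed by Young's convolution inequality $\|\tilde K_j * |g|\|_{\ell^{p_2}} \lesssim \|g\|_{\ell^{p_2}}$ then gives the claimed bound for $P_j$, and the identical argument with $K_{\leq j}$ in place of $K_j$ handles $P_{\leq j}$.

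For $P_{>j}$, I would use the identity $P_{>j} = I - P_{\leq j}$. Since the supports of $f$ and $g$ are separated (hence disjoint), $(fg)_m = 0$ pointwise, so $f P_{>j} g = -f P_{\leq j} g$, and the estimate follows from the $P_{\leq j}$ case. There is no real obstacle in this argument: the only point requiring any care is the split of the kernel decay into the $\jBra{2^j R}^{-N}$ factor (times the summable kernel $\tilde K_j$), which is exactly analogous to the split used in the proof of \Cref{thm:pseudolocality-lemma} just above.
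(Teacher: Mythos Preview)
Your proof is correct and follows exactly the approach the paper intends: the lemma is stated without proof, prefaced by ``A similar pseudolocality estimate also holds [for] the Littlewood-Paley projectors,'' so the intended argument is precisely the kernel-decay-plus-separation trick from the proof of \Cref{thm:pseudolocality-lemma}, which is what you do. Your handling of $P_{>j}$ via $P_{>j}=I-P_{\leq j}$ and the vanishing of $fg$ is clean and correct.
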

	
\section{Linear and multilinear estimates\label{sec:lin-est}}

\subsection{Linear estimates}
We now derive dispersive decay estimates for the Ablowitz-Ladik propagator.  Let $X$ be the Banach space defined by the norm
\begin{equation*}
    \lVert f \rVert_X := \sup_{t} \lVert \hat{f}(\xi,t) \rVert_{L^\infty} + t^{-1/6} \lVert n f_n \rVert_{\ell^2}
\end{equation*}
and define 
\begin{equation*}\begin{split}
    \lVert f \rVert_{X_j^\pm} =& \lVert Q_j^\pm f \rVert_{X}\\
    \lVert f \rVert_{X_{\leq \log t^{-1/3}}^\pm} =& \Bigl\lVert \sum_{2^j \leq t^{-1/3}} Q_{j}^\pm f \Bigr\rVert_{X}\\
\end{split}\end{equation*}    
Then, we have the following linear estimate:
\begin{thm}\label{thm:lin-decay-al}
    Let $f_n \in X$.  Then, for $t > 0$ $u_n(t) = \linProp f_n$ satisfies
    \begin{equation}\begin{split}\label{eqn:refined-linear-est}
        Q_{j}^{\pm} u_n(t) =& \sum_{r = 1}^2\frac{1}{\sqrt[4]{4t^2 - n^2}} e^{-i(-1)^r\sqrt{4t^2 - n^2}+i(-1)^r\frac{\pi}{4}} \hat{P_j^\pm f(\xi_r)} \bbOne_{|n| < 2t - O(t^{1/3})}\\
        &\qquad+ O(t^{-1/3} (t^{1/3} 2^j)^{-4/5})\lVert f \rVert_{X}\chi_{\sim j}^\pm(t)\\
        &\qquad+ O\left(t^{-1/3} \left(t^{1/3}2^{-j/3} (2^j + 2^{k})^{4/3}\right)^{-3/2} \right)\lVert f \rVert_{X_j^\pm}
    \end{split}\end{equation}
    where the $\xi_r$, are solutions\footnote{Strictly speaking, these equations have no solutions in $\bbT$ if $|n| \geq 2|t|$. Since the $\xi_r$ only appear in the formula if $|n| < 2t$, this is not an issue.} to $\sin \xi_r = n/2t$, $\sgn(\cos \xi_r) = (-1)^r$, and $2^{2k} \sim \frac{n}{2t} \mp 1$.   Moreover,
    \begin{equation}\label{eqn:low-freq-est}
        |Q_{\leq \log t^{-1/3}}^{\pm} u_n(t)| \lesssim t^{-1/3}(1 + t^{1/3} 2^{k})^{-2} \lVert f \rVert_{X_{\leq t^{-1/3}}}
    \end{equation}
\end{thm}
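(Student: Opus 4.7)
The plan is to apply stationary phase and van der Corput analysis to the Fourier representation
$$u_n(t) = \frac{1}{\sqrt{2\pi}}\int_\bbT e^{i\Phi(\xi)}\hat f(\xi)\,d\xi,\qquad \Phi(\xi) = 2t\cos\xi - n\xi,$$
localized to dyadic shells $|\xi \mp \pi/2|\sim 2^j$ via the projectors $Q_j^\pm$. The stationary points $\xi_r$ satisfy $\sin\xi_r = n/(2t)$, exist precisely when $|n|\le 2t$, and have $|\Phi''(\xi_r)| = 2t|\cos\xi_r| = \sqrt{4t^2 - n^2}$. The symmetry $\xi\mapsto\xi+\pi$ reduces the $-$ case to the $+$ case, so I would fix a shell $|\xi-\pi/2|\sim 2^j$ and substitute $\xi = \pi/2 + y$, giving $\Phi = \text{const} - (n\mp 2t)y + ty^3/3 + O(ty^5)$. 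With $2^{2k}\sim |n\mp 2t|/t$, the shell contains a stationary point exactly when $2^j\sim 2^k$.

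When $2^j\sim 2^k$ and $|n|\le 2t - Ct^{1/3}$, I would apply the classical stationary phase formula; since $|\Phi''(\xi_r)|\sim t\,2^j$, the leading contribution is
$$\sqrt{\tfrac{2\pi}{|\Phi''(\xi_r)|}}\,e^{i\Phi(\xi_r) + i\pi/4\,\sgn\Phi''(\xi_r)}\,\widehat{Q_j^+ f}(\xi_r),$$
which, after absorbing the $e^{-in\xi_r}$ factor, reproduces the main term of~\eqref{eqn:refined-linear-est}. When $2^j\not\sim 2^k$, the phase is non-stationary, and the identity $\sin\xi - \sin\xi_r = 2\cos\frac{\xi+\xi_r}{2}\sin\frac{\xi-\xi_r}{2}$ yields $|\Phi'|\sim t(2^j+2^k)^2$ on the shell. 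A single integration by parts then gives an amplitude of size
$$(t(2^j+2^k)^2)^{-1}\,\lVert\partial_\xi \hat f\rVert_{L^1(|\xi-\pi/2|\sim 2^j)} \lesssim (t(2^j+2^k)^2)^{-1}\,2^{j/2}\,t^{1/6}\,\lVert f\rVert_{X_j^\pm},$$
matching the third error term of~\eqref{eqn:refined-linear-est} after rewriting.

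For the low-frequency piece $Q^\pm_{\le \log t^{-1/3}}$, I would perform the Airy rescaling $y = t^{-1/3}z$ directly, reducing the integral to
$$t^{-1/3}\int_{|z|\lesssim 1}e^{i(z^3/3 - Nz)}\,\hat f(\pi/2 + t^{-1/3}z)\,dz,\qquad N\sim t^{1/3}2^{2k}.$$
For $N\lesssim 1$ (so $t^{1/3}2^k\lesssim 1$), the integral is bounded trivially by $\lVert\hat f\rVert_{L^\infty}$, giving the $t^{-1/3}\lVert f\rVert_X$ piece of~\eqref{eqn:low-freq-est}. For $N\gg 1$, the cutoff ensures $\hat f(\pi/2+t^{-1/3}z)$ is smoothly compactly supported in $z$, so two integrations by parts using $|\partial_z(z^3/3 - Nz)|\gtrsim N$ on $|z|\ll \sqrt N$ yield the $(1+t^{1/3}2^k)^{-2}$ improvement.

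The main obstacle is deriving the $4/5$ exponent in the error from the stationary-phase regime of~\eqref{eqn:refined-linear-est}. Neither the $L^\infty$ bound $\lVert\hat f\rVert_{L^\infty}\le \lVert f\rVert_X$ nor the weighted $L^2$ bound $\lVert\partial_\xi \hat f\rVert_{L^2}\le t^{1/6}\lVert f\rVert_X$ used naively gives better than $O(t^{-1/3})$ on the stationary shell. The gain $(t^{1/3}2^j)^{-4/5}$ should emerge from introducing an auxiliary cutoff at scale $\delta$ around $\xi_r$ and balancing two contributions: the inner contribution, where $\hat f(\xi) - \hat f(\xi_r)$ is controlled by the $C^{1/2}$ modulus of continuity $|\hat f(\xi)-\hat f(\xi_r)|\le |\xi-\xi_r|^{1/2}t^{1/6}\lVert f\rVert_X$, and the outer contribution, where one or more further integrations by parts use the lower bound $|\Phi'|\gtrsim t\,2^j|\xi-\xi_r|$. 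The delicate optimization over $\delta$ is where the specific exponent $4/5$ should appear, and getting this balance right is the most technically demanding part of the argument.
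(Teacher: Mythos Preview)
Your proposal is correct and follows essentially the same approach as the paper. The paper handles the non-stationary shells $2^j\not\sim 2^k$ by one integration by parts, and the stationary shell $2^j\sim 2^k$ by a further dyadic decomposition around $\xi_r$ down to a scale $2^{\ell_0}\sim t^{-1/3}(t^{1/3}2^j)^{-1/5}$ (your $\delta$), using integration by parts on the outer annuli and, on the innermost piece, the $C^{1/2}$ continuity of $\hat f$ together with a quadratic stationary phase expansion --- exactly the inner/outer balance you describe.
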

\begin{proof}
    We first consider~\eqref{eqn:refined-linear-est}.  Note that we can write
    \begin{equation*}
        Q^{\pm}_j u_n(t) = \frac{1}{\sqrt{2\pi}} \int_\bbT e^{it\phi(\xi)} \hat{f}^{\pm}_j(\xi) \;d\xi
    \end{equation*}
    where $\phi(\xi) = n/t - 2 \cos(\xi)$ and $\hat{f}^{\pm}_j = \mathcal{F}(Q^\pm_j f)$.  Let  $|\frac{n}{2t} \mp 1| \sim 2^{2k}$.  There are three cases depending on the relative sizes of $2^{j}$ and $2^k$.
    
    \paragraph{Case $2^k \ll 2^{j}$} In this case, we have $|\partial_\xi \phi| \sim 2^{2j}$ and $|\partial_\xi^2 \phi| \sim 2^j$, so, integrating by parts,
    \begin{equation*}\begin{split}
        \left|Q^\pm_j u_n(t)\right| &\leq t^{-1} \int_{\bbT} \left| \partial_\xi \frac{1}{\partial_\xi \phi}\right| |\hat f^\pm_j(\xi)|\;d\xi + t^{-1} \int_{\bbT} \left|\frac{1}{\partial_\xi \phi}\right| |\partial_\xi \hat f^\pm_j(\xi)|\;d\xi\\
        \lesssim& t^{-1} 2^{-2j} \lVert \hat{f}^{\pm}_j \rVert_{L^\infty} + t^{-1} 2^{-3/2j} \lVert \partial_\xi \hat{f}^\pm_j \rVert_{L^2}\\
        \lesssim& t^{-1/3} (t^{1/3} 2^j)^{-3/2} \lVert f \rVert_{X^\pm_j}
   \end{split}\end{equation*}
   as required.  
   
   \paragraph{Case $2^k \gg 2^{j}$} In this case, $|\partial_\xi \phi| \sim 2^{2k}$, and an integration by parts argument like above gives
   \begin{equation*}\begin{split}
       \left|Q^\pm_j u_n(t)\right| \lesssim& t^{-1} 2^{2j-4k} \lVert \hat{f}^{\pm}_j \rVert_{L^\infty} + t^{-1} 2^{j/2 - 2k} \lVert \partial_\xi \hat{f}^\pm_j \rVert_{L^2}\\
       \lesssim& t^{-1/3} (t^{1/3} 2^{-j/3} 2^{4/3k})^{-3/2})\lVert f \rVert_{X^\pm_j}
   \end{split}\end{equation*}
   
   \paragraph{Case $2^k \sim 2^{2j}$} In this case, the analysis is more delicate due to the vanishing of $\partial_\xi \phi$.  Let us write
   \begin{equation}
       Q^\pm_j u_n(t) = I_j + \sum_{r=1}^2\sum_{\ell = \ell_0}^{j-10} I_{j,\ell,r}
   \end{equation}
   where
   \begin{equation*}\begin{split}
       I_j =& \frac{1}{\sqrt{2\pi}} \int_{\bbT} (1 - \psi_{\leq j-10}^\pm(\xi - \xi_1) - \psi_{\leq j-10}^\pm(\xi - \xi_2)) e^{it\phi} \hat{f}^{\pm}_j(\xi)\;d\xi\\
       I_{j,\ell,r} =& \frac{1}{\sqrt{2\pi}} \int_{\bbT} \psi_{\ell}^\pm(\xi - \xi_k) e^{it\phi} \hat{f}^{\pm}_j(\xi)\;d\xi, \qquad \ell > \ell_0\\
       I_{j,\ell_0,r} =& \frac{1}{\sqrt{2\pi}} \int_{\bbT} \psi_{\leq \ell_0}^\pm(\xi - \xi_k) e^{it\phi} \hat{f}^{\pm}_j(\xi)\;d\xi\\
   \end{split}\end{equation*}
   and $\ell_0$ is defined such that $2^{\ell_0} \sim t^{-1/3}(t^{1/3} 2^j)^{-1/5}$.  We will show how the estimate the $I_{j,\ell,r}$ terms: the estimate for $I_j$ is similar.  First, suppose $\ell > \ell_0$.  Then, on the support of the integrand of $I_{j,\ell,r}$, we have that $|\partial_\xi \phi| \sim 2^{j + \ell}$, so integration by parts gives
   \begin{equation*}\begin{split}
       |I_{j,\ell,r}| \lesssim& t^{-1} 2^{-j-\ell} \lVert \hat{f}^\pm_j\rVert_{L^\infty} + t^{-1} 2^{-j - \ell/2} \lVert \partial_\xi \hat{f}^\pm_j \rVert_{L^2}\\
       \lesssim& (t^{-1} 2^{-j-\ell} + t^{-1} 2^{-j - \ell/2}) \lVert f \rVert_{X^\pm_j}
   \end{split}\end{equation*}
   For $\ell = \ell_0$, we introduce a further decomposition
   \begin{equation*}\begin{split}
       I_{j,\ell_0,r} =& \frac{1}{\sqrt{2\pi}} \int_{\bbT} \psi_{\leq \ell_0}(\xi - \xi_r) e^{it\phi} \left(\hat{f}^\pm_j(\xi) - \hat{f}^\pm_j(\xi_r)\right)\;d\xi\\
       &\qquad + \frac{\hat{f}^{\pm}_j(\xi_r)}{\sqrt{2\pi}} \int_{\bbT} \psi_{\leq \ell_0}(\xi - \xi_r) \left(e^{it\phi} - e^{it(\phi(\xi_r) + \phi''(\xi_r) \xi^2}\right)\;d\xi\\
       &\qquad + \frac{\hat{f}^{\pm}_j(\xi_r)}{\sqrt{2\pi}} e^{it\phi(\xi_k)} \int_{\bbT} \psi_{\leq \ell_0}(\xi - \xi_r) e^{it\phi''(\xi_r) \xi^2}\;d\xi\\
       &= I_{j,\ell_0,r}^1 + I_{j,\ell_0,r}^2 + I_{j,\ell_0,r}^3
   \end{split}\end{equation*}
   Using the Morrey-Sobolev embedding $\dot{H}^1 \to C^{1/2}$, we find that
   \begin{equation*}
       |I^1_{j,\ell_0,r}| \lesssim 2^{3/2\ell_0} t^{1/6} \lVert f \rVert_{X}
   \end{equation*}
   Moreover, since $\phi(\xi) = \phi(\xi_r) + \phi''(\xi_r) (\xi-\xi_r)^2 + O(2^j |\xi-\xi_r|^3)$, we have that
   \begin{equation*}
       |I^2_{j,\ell_0, r}| \lesssim t 2^j 2^{4\ell_0}
   \end{equation*}
   Finally, for $I^3_{j,\ell_0,r}$, we use the classical method of stationary phase.  Using the inclusion $\bbT \sim [0,2\pi] \to \bbR$, we find that
   \begin{equation*}\begin{split}
       I^3_{j,\ell_0,r} =& \frac{\hat{f}^{\pm}_j(\xi_k)}{\sqrt{2\pi}} e^{it\phi(\xi_r)} \int_{\bbR} \psi_{\leq \ell_0}(\xi - \xi_r) e^{it\phi''(\xi_r) (\xi-\xi_r)^2}\;d\xi\\
       =& 2^{\ell_0}\frac{\hat{f}^{\pm}_j(\xi_r)}{\sqrt{2\pi}} e^{it\phi(\xi_r)} \int_{\bbT} \psi_{\leq 0}(\xi) e^{it\phi''(\xi_r)2^{2\ell_0} \xi^2}\;d\xi\\
       =& \frac{\hat{f}^{\pm}_j(\xi_r)}{\sqrt{t|\phi''(\xi_r)|}} e^{it\phi(\xi_r) + i \frac{\pi}{4} \sgn(\phi''(\xi_r))} + O(t^{-3/2} 2^{-2\ell_0} 2^{-3/2j}\lVert f \rVert_{X})
   \end{split}\end{equation*}
   Recalling that $\cos \xi_k = (-1)^k \sqrt{4 - (n/t)^2}$ gives the desired result.
\end{proof}

From this estimate, we can readily derive the following bilinear decay estimate:	
\begin{cor}\label{cor:decay-estim}
	Let $u_n(t) = \linProp f_n$ with $f_n \in X$.  Then,
	\begin{equation*}
	    |u_n \Psi u_n| \lesssim t^{-1} \lVert f \rVert_X^2
	\end{equation*}
\end{cor}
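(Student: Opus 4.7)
The plan is to apply the refined linear estimate from \Cref{thm:lin-decay-al} separately to $u_n$ and $\Psi u_n$, then multiply the pointwise bounds.  The crucial gain is that the Fourier symbol of $\Psi$, namely $2\cos\xi$, vanishes exactly at the degenerate frequencies $\pm\fhPi$ where the dispersive decay is weakest, so $\Psi u_n$ inherits strictly better decay than $u_n$.  Since $\Psi$ commutes with $\linProp$, we can write $\Psi u_n = \linProp (\Psi f)_n$, and the commutator computation $[n,\Psi]f_n = f_{n-1} - f_{n+1}$ immediately gives $\lVert \Psi f \rVert_X \lesssim \lVert f \rVert_X$.

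Next I would decompose $u_n = \sum_j Q_j^\pm u_n$ and $\Psi u_n = \sum_j Q_j^\pm \Psi u_n$ and apply \Cref{thm:lin-decay-al} to each piece.  On the support of $Q_j^\pm$ the symbol $2\cos\xi$ has size $\sim 2^j$, so in the stationary-phase formula \eqref{eqn:refined-linear-est} for $Q_j^\pm \Psi u_n$ the leading term picks up an additional factor $2\cos\xi_r$, with $|\cos\xi_r| \sim 2^k$, where $2^{2k}\sim |n/(2t) \mp 1|$ as in the linear theorem.  In the dominant regime $j \sim k$ with $2^k \gtrsim t^{-1/3}$ (so that a stationary point exists), the leading contributions obey
\begin{equation*}
    |Q_j^\pm u_n| \lesssim (4t^2-n^2)^{-1/4}\lVert f \rVert_X \sim t^{-1/2} 2^{-k/2}\lVert f \rVert_X, \qquad |Q_j^\pm \Psi u_n| \lesssim t^{-1/2} 2^{k/2}\lVert f \rVert_X,
\end{equation*}
since $\sqrt{4t^2-n^2} = 2t|\cos\xi_r|$.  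Multiplying these bounds recovers exactly $t^{-1}\lVert f \rVert_X^2$.

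The remaining cross-terms involve pairings of a main term with an error term, or two error terms, pieces with $j \not\sim k$, or the regions $|n|\geq 2t - O(t^{1/3})$ and the low-frequency range $Q^\pm_{\leq \log t^{-1/3}}$.  In each of these cases at least one of the two factors carries a non-stationary error bound from \eqref{eqn:refined-linear-est} or \eqref{eqn:low-freq-est}, which decays strictly faster than $t^{-1/2}2^{-k/2}$, and the extra factor of $2^j$ acquired when passing from $u_n$ to $\Psi u_n$ on the corresponding side ensures that the product is at worst $t^{-1}\lVert f \rVert_X^2$ after summation in $j$.  Off-diagonal interactions $j_1 \not\sim j_2$ between the two factors contribute nothing extra, since $Q_{j_1}^\pm u_n$ and $Q_{j_2}^\pm \Psi u_n$ share the same physical-space localization about $n$ and their individual decay estimates improve rapidly away from $j \sim k$.

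I expect the main obstacle to be the tight book-keeping of the physical-space error term $O(t^{-1/3}(t^{1/3}2^j)^{-4/5}\lVert f \rVert_X)$ in \eqref{eqn:refined-linear-est} when $j \sim k$ is near the threshold $t^{-1/3}$: the $4/5$ exponent is sharp and leaves little room, so one must invoke $2^k \gtrsim t^{-1/3}$ carefully to check that pairing this error with the (enhanced) bound on the $\Psi u_n$ side still yields the desired $t^{-1}$ rate.
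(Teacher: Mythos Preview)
Your approach is correct and is precisely the one the paper has in mind: the corollary is stated immediately after \Cref{thm:lin-decay-al} with only the remark ``From this estimate, we can readily derive the following bilinear decay estimate,'' and the introduction already singles out the mechanism you exploit---that the symbol $2\cos\xi$ of $\Psi$ vanishes at the degenerate frequencies, compensating for the slow decay there. One small sharpening: to get the extra $2^j$ in the \emph{error} terms of \eqref{eqn:refined-linear-est} you need more than $\lVert \Psi f\rVert_X \lesssim \lVert f\rVert_X$; rather, write $Q_j^\pm \Psi = 2^j \tilde Q_j^\pm$ with $\tilde Q_j^\pm$ a multiplier obeying the same symbol bounds as $Q_j^\pm$, and rerun the proof of \Cref{thm:lin-decay-al} (equivalently, verify $\lVert \Psi f\rVert_{X_j^\pm}\lesssim 2^j\lVert f\rVert_X$, which holds once $2^j\gtrsim t^{-1/3}$). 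With that in hand your case analysis closes; the worry about the $4/5$ exponent is unfounded, since pairing any two of the bounds yields at worst $t^{-1}(t^{1/3}2^k)^{-3/10}\lVert f\rVert_X^2$.
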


We also have the following bilinear estimate, which will be useful later:
\begin{cor}\label{cor:bilin-sin-est}
    If $u_n(t) = \linProp f_n$, $f_n \in X$, then
    \begin{equation*}
        \left| \sin\left( \frac{D}{2}\right) |u_n|^2\right| \lesssim t^{-1} \lVert f \rVert_X^2
    \end{equation*}
\end{cor}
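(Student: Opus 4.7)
The plan is to express $(\sin(D/2)|u_n|^2)_n$ as a two-dimensional oscillatory integral in the Fourier variables and then apply the method of stationary phase. The key observation is that the multiplier $\sin(\kappa/2)$ (with $\kappa$ the output frequency) vanishes at precisely those critical points of the phase where the Hessian would force a degenerate $|\cos\xi_r|^{-1}$ amplitude, and at the remaining critical points it takes the value $\pm\cos\xi_1$, which exactly cancels that very factor and delivers the clean $t^{-1}$ decay.

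Concretely, starting from $\hat u(\xi,t) = e^{2it\cos\xi}\hat f(\xi)$ and writing out the convolution formula for $\widehat{|u|^2}$, one obtains
\begin{equation*}
(\sin(D/2)|u|^2)_n = \frac{1}{(2\pi)^{3/2}}\iint_{\bbT^2} e^{-in\kappa + 2it[\cos\eta - \cos(\kappa+\eta)]}\sin(\kappa/2)\,\hat f(\kappa+\eta)\overline{\hat f(\eta)}\, d\eta\, d\kappa.
\end{equation*}
The phase $\Phi(\kappa,\eta)= -n\kappa+2t[\cos\eta-\cos(\kappa+\eta)]$ has critical points where $\sin\eta = \sin(\kappa+\eta) = n/(2t)$. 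Writing $\xi_1 = \arcsin(n/(2t))$ and $\xi_2 = \pi-\xi_1$ for the two solutions, one finds four critical points: two at $\kappa=0$ (with $\eta=\xi_r$) and two at $\kappa = \pm(\pi-2\xi_1)$. A direct computation gives $|\det D^2\Phi| = 4t^2\cos^2\xi_1$ at each of them, and one checks that the symbol $\sin(\kappa/2)$ vanishes at the $\kappa=0$ critical points while taking the values $\pm \cos\xi_1$ at the remaining ones. The standard 2D stationary-phase formula therefore gives a leading contribution from the latter of magnitude $\pi|\cos\xi_1|/(2t|\cos\xi_1|)\cdot|\hat f(\xi_1)\hat f(\xi_2)| = O(t^{-1}\|f\|_X^2)$.

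To implement this rigorously, I would introduce a smooth partition of unity in $(\kappa,\eta)$ isolating a neighborhood of each critical point and apply the 2D stationary phase formula to each piece in the non-degenerate regime $|\cos\xi_1|\gtrsim t^{-1/3}$, integrating by parts in $\kappa$ or $\eta$ on the non-stationary remainder to absorb extra powers of $t^{-1}$. The $\kappa=0$ critical points contribute only through the first correction in the stationary-phase expansion, since the leading order vanishes; using $\sin(\kappa/2)=O(|\kappa|)$ together with the effective support width $|\kappa|\lesssim 1/\sqrt{t|\cos\xi_1|}$ gives a contribution of order $t^{-3/2}|\cos\xi_1|^{-3/2}$, which is absorbed by the $t^{-1}$ target on the non-degenerate range. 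Control of the error terms and of the non-stationary remainder uses the $X$-norm bounds on $\hat f$ and $\partial_\xi\hat f$.

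The main obstacle is the near-degenerate regime $|\cos\xi_1|\lesssim t^{-1/3}$, where all four critical points coalesce at $\kappa=0$, $\eta=\pm\pi/2$ and the clean trigonometric cancellation described above breaks down. Here I would replace the Gaussian stationary-phase formula by the refined Airy-type expansion packaged in \Cref{thm:lin-decay-al}, and use the fact that on the support of the frequency localization one still has $\sin(\kappa/2)=O(t^{-1/3})$ to recover the missing decay factor; the coalescence makes the analysis more delicate because the cancellation must now be extracted from the structure of the error terms in the refined estimate rather than from an exact trigonometric identity.
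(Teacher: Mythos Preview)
Your stationary-phase calculation is correct: the four critical points, the Hessian determinant $4t^2\cos^2\xi_1$, and the crucial observation that $\sin(\kappa/2)$ vanishes at the two $\kappa=0$ points while equaling $\pm\cos\xi_1$ at the other two are all right, and this does explain the clean $t^{-1}$ rate. However, the paper takes a different and considerably shorter route. Rather than redoing a two-dimensional stationary-phase analysis, it decomposes dyadically in the \emph{output} frequency, writing
\[
\bigl\lVert \sin(D/2)|u_n|^2 \bigr\rVert_{\ell^\infty} \lesssim \sum_{t^{-1/3}\leq 2^j \lesssim 1} 2^j \bigl\lVert Q_j^0 |u_n|^2 \bigr\rVert_{\ell^\infty},
\]
and then expands $Q_j^0|u_n|^2$ using Littlewood--Paley on the two $u_n$ factors relative to $\pm\pi/2$. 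The point is that if the output frequency is at distance $\sim 2^j$ from $0$, at least one input factor must sit at distance $\gtrsim 2^j$ from $\pm\pi/2$, and the refined linear estimate of \Cref{thm:lin-decay-al} then gives $\lVert Q_j^0|u_n|^2\rVert_{\ell^\infty}\lesssim t^{-1}2^{-j}\lVert f\rVert_X^2$, which sums. This approach buys uniformity: the near-degenerate regime $|\cos\xi_1|\lesssim t^{-1/3}$ that you flag as the main obstacle is handled automatically, since \Cref{thm:lin-decay-al} already contains the Airy-type analysis, and no separate coalescing-critical-point argument is needed. Your direct method is more self-contained and makes the mechanism of cancellation explicit, but the sketch you give for the near-degenerate case (``extract the cancellation from the structure of the error terms'') is where the real work would lie, and it is precisely this work that the paper's reduction to \Cref{thm:lin-decay-al} sidesteps.
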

\begin{proof}
    Let us define $Q_j^0$ to be the frequency projector which projects to frequencies within a distance $\sim 2^j$ from $0$ on the torus for $2^j \geq 2t^{-1/3}$, and to project to all frequencies within a distance $\lesssim t^{-1/3}$ of $0$ for $2^{j-1} < t^{-1/3} \leq 2^j$.  Then, we can write
    \begin{equation*}
        \lVert \sin\left( \frac{D}{2}\right) |u_n|^2 \rVert_{\ell^\infty} \lesssim \sum_{t^{-\frac{1}{3}} \leq 2^j \lesssim 1} 2^j \lVert Q_j^0 |u_n|^2 \rVert_{\ell^\infty}
    \end{equation*}
    Performing a further dyadic decomposition in frequency, we can write
    \begin{equation*}\begin{split}
        Q_j^0 |u_n|^2 =& Q_j^0(Q_{\gtrsim j}^+ u_n \overline{Q_{\gtrsim j}^+ u_n})\\
        &+ Q_j^0(Q_{\sim j}^+ u_n \overline{Q_{\ll j}^+ u_n})\\
        &+ \bsime
    \end{split}\end{equation*}
    By~\Cref{thm:lin-decay-al}, the leading order contribution comes from the first term.  Thus,
    \begin{equation*}
        \lVert Q_j^0 |u_n|^2 \rVert_{\ell^\infty} \lesssim t^{-1} 2^{-j} \lVert f_n \rVert_{X}^2
    \end{equation*}
    which completes the proof.
\end{proof}

\section{The approximate self-similar dynamics}\label{sec:approx-ss}

From our heuristic calculations in the introduction, we expect that wave-packets near the degenerate frequencies $\pm \fhPi$ should behave approximately like solutions to the complex mKdV equation (up to phase modulation and translation).  Since the low frequency component of complex mKdV exhibits nonlinear self-similar behavior, let us consider a family of self-similar solutions given by $\fs(x,t;\alpha)$, where $\fs$ satisfies
\begin{equation}\label{eqn:cmkdv-self-sim}
    \left\{\begin{array}{c}
        (\partial_t + \frac{1}{3}\partial_x^3) \fs(x,t;\alpha) = 2|\fs(x,t;\alpha)|^2 \partial_x \fs(x,t;\alpha)\\
        \hat{\fs}(0,t;\alpha) = \alpha
    \end{array}\right.
\end{equation}
To uniquely specify $\fs$, we also impose the condition that $\fs$ decay at the same rate a solution to the complex mKdV equation with localized initial data.  These functions play a key role in our arguments, so we will review their key properties now.

\subsection{Properties of self-similar solutions}

By writing $\fs(x,t;\alpha) = t^{-1/3} v(t^{-1/3} x;\alpha)$, we see that $v$ satisfies
\begin{equation}\label{eqn:cmdkv-self-sim-prof-expanded}
    \left\{\begin{array}{c}
        \partial_y^3 v(y;\alpha) -  y\partial_y v(y;\alpha) - v(y;\alpha) = 6|v(y;\alpha)|^2 \partial_y v(y;\alpha)\\
        \hat{v}(0;\alpha) = \alpha
    \end{array}\right.
\end{equation}
where $y = t^{-1/3}x$ is the self-similar variable.  Since we are interested in solutions $\fs$ which behave like linear solutions, let us assume that we can treat the nonlinearity in~\eqref{eqn:cmdkv-self-sim-prof-expanded} perturbatively as $y \to \infty$, giving us the expression
\begin{equation*}
    v(y;\alpha) = c_1(\alpha) \operatorname{Ai}(y) + c_2(\alpha) \operatorname{Bi}(y) + c_3(\alpha) \operatorname{Gi}(y) + o(1) \qquad\qquad \text{as } y \to \infty
\end{equation*}
where $\operatorname{Ai}$ and $\operatorname{Bi}$ are the Airy functions of the first and second kind and $\operatorname{Gi}$ is Scorer's function~\cite{scorerNumericalEvaluationIntegrals1950}, which satisfies
\begin{equation*}
    (\partial_y^2 - y) \operatorname{Gi}(y;\alpha) = \frac{1}{\pi}
\end{equation*}
Based on our requirement that $v$ be bounded, we can immediately conclude that $c_2(\alpha) = 0 $, and if we require that $(\partial^2_y - y)v \in L^2$ (so $\fs$ is localized), we see that $c_3(\alpha) = 0$ as well.  Thus, we seek solutions $v$ which are asymptotic to some multiple of the Airy function $\operatorname{Ai}$ as $y \to \infty$.

For $\alpha$ sufficiently small and real, these solutions were studied in~\cite{correiaAsymptoticsFourierSpace2020} (in comparing the results, note that our $\alpha$ corresponds to their $c$).  There, it was shown $v$ solves
\begin{equation}\label{eqn:cmkdv-self-sim-prof}
    \left\{\begin{array}{c}
        yv(y;\alpha) -  \partial_y^2 v(y;\alpha) = 6 |v(y;\alpha)|^2v(y;\alpha)\\
        \hat{v}(0;\alpha) = \alpha\\
        \hat{v}(\cdot;\alpha) \text{ is continuous at } 0
    \end{array}\right.
\end{equation}
(Actually, \cite{correiaAsymptoticsFourierSpace2020} considers a version of~\Cref{eqn:cmkdv-self-sim,eqn:cmkdv-self-sim-prof} with slightly different different constants, but this is easily handled by rescaling).  Since~\eqref{eqn:cmdkv-self-sim-prof-expanded} is phase rotation invariant, we see that the solution to~\eqref{eqn:cmdkv-self-sim-prof-expanded} for complex $\alpha$ is simply given by $v(y;r e^{ita}) =e^{ita} v(y;r)$, so we can also use~\eqref{eqn:cmkdv-self-sim-prof} to define $v$ for $\alpha$ small and complex.  

We now turn to extracting bounds and decay estimates for $\fs$.  The results of~\cite{correiaAsymptoticsFourierSpace2020} imply that the function 
\begin{equation*} 
    \theta(x,t) = e^{\frac{t}{3} \partial_x^3} \fs(x,t) = e^{\frac{1}{3} \partial_x^3} v(x)
\end{equation*}
satisfies the bounds
\begin{align}
    \lVert \hat{\theta} \rVert_{L^\infty} \lesssim& |\alpha|\label{eqn:theta-L-inf-bdd}\\
    \lVert x\theta \rVert_{L^2} \lesssim& t^{1/6} |\alpha|^3\label{eqn:theta-L2-bdd}
\end{align}
The second inequality comes from the identity
\begin{equation}\label{eqn:cmkdv-self-sim-fourier-ident}
    e^{-\frac{t}{3}\partial_x^3}xe^{\frac{t}{3} \partial_x^3} \fs = (x - t \partial_x^2) \fs(x,t;\alpha) = 6|\fs(x,t;\alpha)|^2 \fs(x,t;\alpha)
\end{equation}
which can be read off of~\eqref{eqn:cmkdv-self-sim-prof} using the scaling of $\fs$.  The bounds~\eqref{eqn:theta-L-inf-bdd} and~\eqref{eqn:theta-L2-bdd} imply that $\fs$ obeys the same sort of linear decay estimates as solutions to the linear KdV equation.  In particular, for $2^j \geq t^{-1/3}$
\begin{equation}\label{eqn:fs-decay-bdds}
    |Q_{j}^\tcts \fs(x,t;\alpha)| \lesssim |\alpha|\begin{cases}
        t^{-1/2} 2^{-j/2} & -x \sim t 2^{2j}\\
        t^{-5/6} 2^{-3/2 j} & |x| \ll t 2^{2j}\\
        t^{-5/6} 2^{j/2} 2^{-2k} & |x| \sim t 2^{2k} \gg t 2^{2j}
    \end{cases}
\end{equation}
and
\begin{equation}\label{eqn:fs-decay-bdds-low}
    |Q_{\leq \log t^{-1/3}}^\tcts\fs(x,t;\alpha)| \lesssim |\alpha|\begin{cases}
        t^{-1/3} & |x| \lesssim t^{1/3}\\
        t^{-1} 2^{-2k} & |x| \sim t 2^{2k} \gg t^{1/3}
    \end{cases}
\end{equation}
see~\cite[Section 3]{stewartLongTimeDecay2021}.  In our work, it is necessary to allow the profile of the self-similar solutions to change in time.  Given a solution $\fs(x;\alpha)$, we define $D_\alpha \fs$ by
\begin{equation}\label{eqn:D-alpha-def}
    \partial_s \fs(x,t;\alpha(t)) = [D_\alpha \fs] \cdot \alpha'(s)
\end{equation}
We employ the somewhat unusual notation to highlight the fact that $[D_\alpha \fs]$ is \emph{not} the derivative of $\fs$ with respect to $\alpha$ in the sense of complex variables.  (Indeed, $\fs$ is not complex differentiable in $\alpha$.)  Instead, we identify $\alpha = (a + ib) \in \bbC$ with $(a,b) \in \bbR^2$ and define $[D_\alpha \fs]$ to be the derivative of the corresponding function from $\bbR^2$ to $\bbC$.  By using the formula for $\hat{\fs}(\xi;\alpha)$ given in~\cite{correiaAsymptoticsFourierSpace2020} together with the decay estimates for the linear Airy equation, we compute that
\begin{equation}\label{eqn:D-alpha-fs-bdds}\begin{split}
    \lVert [D_\alpha Q_{j}^\tcts \fs] \rVert_{L^\infty} \lesssim& t^{-1/2} 2^{-j/2} \ln(2 + t^{1/3} 2^j)\\
    \lVert \chi_{\ll j} [D_\alpha Q_{j}^\tcts \fs] \rVert_{L^\infty} \lesssim& t^{-5/6} 2^{-3/2j} \ln(2 + t^{1/3} 2^j)\\
    \lVert \chi_{k} [D_\alpha Q_{j}^\tcts \fs] \rVert_{L^\infty} \lesssim& t^{-5/6} 2^{j/2 - 2k} \ln(2 + t^{1/3} 2^j)\qquad k > j + 20\\
    \lVert [D_\alpha \fs] \rVert_{L^p} \lesssim& t^{-\frac{1}{3} + \frac{1}{3p}}
\end{split}\end{equation}
for $4 < p \leq \infty$, see~\cite[Section 4]{stewartLongTimeDecay2021}.
\begin{rmk}\label{rmk:D-alpha-unif}
    In the above inequalities, the implicit constants can be taken to be uniform in $\alpha$ for $|\alpha| \ll 1$.
\end{rmk}
By combining this estimate with~\cref{eqn:fs-decay-bdds,eqn:fs-decay-bdds-low,eqn:cmkdv-self-sim-fourier-ident}, we see that
\begin{equation}\label{eqn:D-alpha-L-fs-bdds}
    \lVert (x - t\partial_x^2)[D_\alpha \fs] \rVert_{L^2} \lesssim t^{1/6} |\alpha|^2
\end{equation}

\subsection{The approximate self-similar solutions}
Using these self-similar solutions, we define the approximate self-similar solution to Ablowitz-Ladik by
\begin{equation}\begin{split}
    \widehat{\mSS}(\xi, t; \alpha_+, \alpha_-) =& \rho(t^{1/5} (\xi - \pi/2)) e^{-2i(\xi - \fhPi) t}\hat{\fs}(\xi - \pi/2,t;\alpha_+)\\
    &+ \rho(t^{1/5} (\xi + \pi/2)) e^{2i(\xi + \fhPi) t}\hat{\fs}(\xi + \pi/2,-t;\alpha_-)
\end{split}\end{equation}
where $\rho$ is a cut-off function with $\rho(\xi) = 1$ for $|\xi| < 1/2$ and $\rho(\xi) = 0$ for $|\xi| > 1$.  Occasionally, it will be useful to consider the parts of $\mSS_n$ concentrated at the positive and negative frequencies separately, so we also write
\begin{equation}\label{eqn:mssn-pm-def}
    \mSS_n^+(t) = \mSS_n(t, \alpha_+, 0)\qquad\qquad\qquad \mSS_n^-(t) = \mSS_n(t, 0, \alpha_-)
\end{equation}
for the parts of the approximately self-similar solution localized near frequencies $\pm \fhPi$, respectively, and denote the corresponding self-similar solutions by
\begin{equation*}
    \fs^\pm(x,t) = \fs(x,t;\alpha_\pm)
\end{equation*}
\begin{rmk}
    The choice to cut-off at $|\xi \mp \fhPi| \sim t^{-1/5}$ can be motivated as follows: 
\end{rmk}

We now consider estimates for $\mSS_n$.  For $2^j, 2^k \lesssim |t|^{1/5}$ and $\nu \in \{+, -\}$, we see that
\begin{equation*}\begin{split}
    \chi_{k}^\nu \mSS_n =& (\nu i)^n \chi_k^\tcts(2n-\nu t,t) \fs(2n-\nu t, \nu t, \alpha_\nu)\\
    Q_{j}^\nu \mSS_n =& (\nu i)^n Q_j^\tcts\fs(2n-\nu t, \nu t, \alpha_\nu)\\
    \chi_{k}^\nu Q_{j}^\nu \mSS_n =& (\nu i)^n \chi_k^\tcts(2n-\nu t,t) Q_j^\tcts \fs(2n-\nu t, \nu t, \alpha_\nu)\\
\end{split}\end{equation*}
so we can obtain decay estimates for $\mSS_n$ from those for $\fs$ in a straightforward manner.  Since we will use these estimates extensively later, we record them here for later reference:
\begin{equation}\label{eqn:mssn-lin-ests}\begin{split}
    \lVert \mSS_n \rVert_{\ell^p} \lesssim& (|\alpha_{+}| + |\alpha_{-}|)  t^{\frac{1}{3p}-\frac{1}{3}}\\
    \lVert Q^{\pm}_j \mSS_n \rVert_{\ell^p} \lesssim& |\alpha^\pm|\begin{cases}
        t^{\frac{1}{p} - \frac{1}{2}} 2^{\left(\frac{2}{p}-\frac{1}{2}\right)j} & t^{-1/3} \leq 2^j \lesssim t^{-1/5}\\
        0 & \text{else}
    \end{cases}\\
    \lVert \chi_{k}^{\pm} \mSS_n \rVert_{\ell^p} \lesssim& (|\alpha_{+}| + |\alpha_{-}|)\begin{cases}
        t^{\frac{1}{p} - \frac{1}{2}} 2^{\left(\frac{2}{p}-\frac{1}{2}\right)j} & t^{-1/3} \leq 2^k \lesssim t^{-1/5}\\
        t^{\frac{1}{p} - \frac{5}{6}} 2^{\left(-\frac{3}{2} + \frac{2}{p}\right)j} & 2^k \gg t^{-1/5}\\
    \end{cases}\\
    \lVert \chi_{k}^\mu Q_j^\nu \mSS_n \rVert_{\ell^p} \lesssim& |\alpha_{\nu}| \begin{cases}
        0 & 2^j \gg t^{-1/5}\\
        t^{\frac{1}{p} - \frac{1}{2}} 2^{\left(\frac{2}{p}-\frac{1}{2}\right)j} & \mu = \nu, 2^j \sim 2^k\\
        t^{\frac{1}{p} - \frac{5}{6}} 2^{\left(\frac{2}{p} - 2\right)k} 2^{j/2} & \mu = \nu, 2^j \ll 2^k\\
        t^{\frac{1}{p} - \frac{5}{6}} 2^{\left(-\frac{3}{2} + \frac{2}{p}\right)j} & \text{else}\\
    \end{cases}
\end{split}\end{equation}	
An important property of the self-similar solutions $\fs$ is the identity
\begin{equation*}\label{eqn:self-sim-vf-identity}
    e^{-\frac{1}{3}\partial_x^3 t}x e^{\frac{1}{3}\partial_x^3 t}\fs =e^{-\frac{1}{3}\partial_x^3 t}x \theta =  2t|\fs|^2\fs
\end{equation*}
A similar identity holds for $\mSS_n$, which allows us to estimate $L\mSS_n$ pointwise in space:
\begin{lemma}\label{lem:LSS-expression}
    We can write
    \begin{equation*}
        L \mSS_n = \fL_n^+ + \fL_n^- + \fC_n^+ + \fC_n^-
    \end{equation*}
    where $\fL_n^\pm$ and $\fC_n^\pm$ have Fourier transforms supported on $|\xi \pm \fhPi| \lesssim t^{-1/5}$ and satisfy the estimates
    \begin{equation}\label{eqn:fL-ests}\begin{aligned}
        \lVert \fL_n^\pm \rVert_{\ell^2} \lesssim& |\alpha_\pm| &\qquad
        \lVert \fL_n^\pm \rVert_{\ell^\infty} \lesssim& |\alpha_\pm| t^{-1/5}\\
        \lVert \Psi \fL_n^\pm \rVert_{\ell^\infty} \lesssim& |\alpha_\pm| t^{-2/5}&\qquad
        \lVert Q_{\lesssim j}^\pm \fL_n^\pm \rVert_{\ell^\infty} \lesssim& |\alpha_\pm| t^{1/2} 2^{7/2j}\\
        \lVert \chi_{k}^\pm \fL_n^\pm \rVert_{\ell^\infty} \lesssim& |\alpha_\pm| t^{1/2} 2^{7/2k}&\qquad
        \lVert \chi^\pm_{k} \fL^\pm_n \rVert_{\ell^\infty} \lesssim& |\alpha_\pm| t^{-11/15}2^{-2k}               
        \qquad\qquad 2^k \gg t^{-1/5}\\
        \lVert \chi_{k}^\pm \Psi \fL_n^\pm \rVert_{\ell^\infty} \lesssim& |\alpha_\pm| t^{1/2} 2^{9/2k} &\qquad
        \lVert \chi_{k}^\pm \Psi \fL^\pm_n \rVert_{\ell^\infty} \lesssim& |\alpha_\pm| t^{-14/15} 2^{-2k}	   \qquad\qquad 2^k \gg t^{-1/5}
    \end{aligned}\end{equation}
    and
    \begin{equation}\label{eqn:fC-ests}\begin{aligned}
        \lVert \fC^\pm_n \rVert_{\ell^2} \lesssim& |\alpha_\pm|^3 t^{1/6}&\qquad
        \lVert \fC^\pm_n \rVert_{\ell^\infty} \lesssim& |\alpha_\pm|^3\\
        \lVert \Psi \fC^\pm_n \rVert_{\ell^\infty} \lesssim& |\alpha_\pm|^3t^{-1/3}&\qquad
        \lVert Q_j^\pm \fC^\pm \rVert_{\ell^\infty} \lesssim& |\alpha_\pm|^3 t^{-1/2} 2^{-3/2j}\\
        \lVert \chi_{k}^\pm \fC^\pm_n \rVert_{\ell^\infty} \lesssim& |\alpha_\pm|^3 t^{-3/2} 2^{-3/2 k}&\qquad
        \lVert \chi_{k}^\pm \Psi \fC^\pm_n \rVert_{\ell^\infty} \lesssim& |\alpha_\pm|^3 t^{-3/2} 2^{-k/2}
    \end{aligned}\end{equation}
\end{lemma}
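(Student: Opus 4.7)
The plan is to compute $L\mSS^\pm$ directly in Fourier and then isolate the cubic part using the self-similar vector-field identity~\eqref{eqn:cmkdv-self-sim-fourier-ident}. The operator $L = \linProp n \invLinProp$ has Fourier symbol $2t\sin\xi - i\partial_\xi$. Applying this to $\widehat{\mSS^+}(\xi) = \rho(t^{1/5}\zeta)\,e^{-2it\zeta}\,\hat\fs(\zeta,t;\alpha_+)$ with $\zeta := \xi - \fhPi$, Leibniz distributes $\partial_\xi$ across the cutoff, the phase, and the profile. The phase-derivative contribution $-2t\rho e^{-2it\zeta}\hat\fs$ combines with $2t\sin\xi\,\widehat{\mSS^+} = 2t\cos\zeta\,\rho e^{-2it\zeta}\hat\fs$ to give $2t(\cos\zeta-1) = -t\zeta^2 + O(t\zeta^4)$; pairing the leading $-t\zeta^2\hat\fs$ with the profile-derivative piece $-i\rho e^{-2it\zeta}\hat\fs'$ matches (up to sign conventions) the Fourier transform of $(x - t\partial_x^2)\fs$, which by~\eqref{eqn:cmkdv-self-sim-fourier-ident} equals a cubic multiple of $|\fs|^2\fs$.

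I therefore set $\widehat{\fC^+}(\xi) := 6t\,\rho(t^{1/5}\zeta)e^{-2it\zeta}\mathcal{F}[|\fs^+|^2\fs^+](\zeta)$ and $\fL^+ := L\mSS^+ - \fC^+$, and analogously define $\fC^-,\fL^-$ near $-\fhPi$. By construction, $\widehat{\fL^+}$ is the sum of (i) the cutoff-derivative piece $-it^{1/5}\rho'e^{-2it\zeta}\hat\fs$, (ii) the higher-order phase remainder $O(t\zeta^4)\rho e^{-2it\zeta}\hat\fs$, and (iii) the commutator between $\rho(t^{1/5}\cdot)$ and the continuous vector-field identity. All four of $\fC^\pm, \fL^\pm$ then inherit Fourier support in $|\xi \mp \fhPi|\lesssim t^{-1/5}$.

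The $\fC^\pm$ estimates in~\eqref{eqn:fC-ests} follow by inserting the decay bounds~\eqref{eqn:fs-decay-bdds} and~\eqref{eqn:fs-decay-bdds-low} into the physical-space representation $\fC^+_n \approx 6t\,i^n\,|\fs^+|^2\fs^+$ evaluated at the translated argument dictated by the phase: for instance, $|\fs|\lesssim t^{-1/3}|\alpha|$ in the self-similar region gives $|\fC^\pm|\lesssim|\alpha|^3$ directly. The refined $\Psi\fC^\pm$ bounds exploit the fact that, after factoring out the $(\pm i)^n$ modulation, $\Psi$ acts as a centered discrete difference on the envelope $|\fs^\pm|^2\fs^\pm$; since this envelope varies on the self-similar scale $t^{1/3}$, each application of $\Psi$ gains a factor $t^{-1/3}$. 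The $\fL^\pm$ estimates~\eqref{eqn:fL-ests} come from Fourier-side analysis: the small support $|\zeta|\lesssim t^{-1/5}$ converts pointwise Fourier bounds (based on $\lVert\hat\fs\rVert_{L^\infty}\lesssim|\alpha|$, the $\hat\theta$ bounds~\eqref{eqn:theta-L-inf-bdd}--\eqref{eqn:theta-L2-bdd}, and the representation $\hat\fs = e^{\pm it\zeta^3/3}\hat\theta$) into the $\ell^p$ and $\ell^\infty$ bounds via Plancherel and Hausdorff--Young. The frequency-localized $Q_{\lesssim j}^\pm$ and spatially-localized $\chi_k^\pm$ bounds are obtained by further restricting this analysis to the appropriate bands and invoking the analogous decay estimates for $\hat\fs$.

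The main technical obstacle is that~\eqref{eqn:cmkdv-self-sim-fourier-ident} is an equality on $\bbR$, whereas the construction of $\mSS$ imposes a torus-Fourier cutoff $\rho(t^{1/5}\cdot)$. Substituting the identity inside the cutoff produces commutator errors that are essentially convolutions with the physical-space representative of $\rho$ on the scale $t^{1/5}$; verifying that these errors are absorbed into $\fL^\pm$ at the level of~\eqref{eqn:fL-ests} requires careful use of the fine structure of $\hat\fs$ near $\zeta = 0$, in particular the weighted $L^2$ control of $\partial_\zeta\hat\theta$ coming from~\eqref{eqn:theta-L2-bdd}, which is what prevents the naive $t$-dependence in $\lVert\fL^\pm\rVert_{\ell^2}$ from appearing.
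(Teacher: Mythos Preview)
Your decomposition is essentially the paper's: split $L\mSS^\pm$ in Fourier into the piece where the derivative hits the profile (which, via the self-similar identity, becomes the cubic $\fC^\pm$) and the remainder $\fL^\pm$. The paper streamlines the bookkeeping by writing $\widehat{\mSS^+}$ in terms of $\hat\theta$ rather than $\hat\fs$: then $\fC^+$ is exactly the term where $\partial_\xi$ lands on $\hat\theta$, and since $\partial_\xi\hat\theta$ is already (up to constants) the Fourier transform of $x\theta$, no $t\zeta^2\hat\fs$ piece needs to be peeled off the phase and no commutator with $\rho$ ever appears. Your item~(iii) is in fact absent: carrying out your own computation, the combination $-t\zeta^2\hat\fs - i\partial_\zeta\hat\fs$ sitting under $\rho\, e^{-2it\zeta}$ is \emph{exactly} $\rho\, e^{-2it\zeta}\,\widehat{(x-t\partial_x^2)\fs}$, so $\fC^+_n$ is cleanly $\rho(t^{1/5}D)$ applied to the translated, modulated cubic, and $\fL^+$ consists only of your pieces~(i) and~(ii).

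There is, however, a real gap in your $\ell^\infty$ bounds for $\fL^\pm$. Hausdorff--Young from pointwise control of $\widehat{\fL^\pm}$ gives only $\lVert\fL^\pm\rVert_{\ell^\infty}\leq\lVert\widehat{\fL^\pm}\rVert_{L^1}\lesssim|\alpha_\pm|$: for instance the cutoff-derivative piece $t^{1/5}\rho'(t^{1/5}\zeta)\hat\fs(\zeta)$ has $L^1$ norm $\sim t^{1/5}\cdot|\alpha_\pm|\cdot t^{-1/5}=|\alpha_\pm|$, missing the claimed $t^{-1/5}$. The paper instead observes that, after undoing the modulation $(\pm i)^n$ and the translation by $\mp 2t$, each constituent of $\fL^+$ is a Fourier multiplier applied to $\fs$ itself, and then feeds in the dispersive decay~\eqref{eqn:fs-decay-bdds}: the $O(t\zeta^4)$ piece at frequency $\sim 2^j$ carries a multiplier of size $t2^{4j}$, so $\lVert Q_j^+\fL^+\rVert_{\ell^\infty}\lesssim t2^{4j}\cdot t^{-1/2}2^{-j/2}|\alpha_\pm|=|\alpha_\pm|t^{1/2}2^{7j/2}$, and summing over $2^j\lesssim t^{-1/5}$ recovers $|\alpha_\pm|t^{-1/5}$. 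All of the refined $\ell^\infty$ bounds in~\eqref{eqn:fL-ests}, including the $\chi_k^\pm$ and $Q_{\lesssim j}^\pm$ versions, require this physical-space mechanism; size bounds on $\widehat{\fL^\pm}$ alone cannot detect the oscillation that produces the decay of $\fs$.
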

\begin{proof}
    Using the identity
    \begin{equation*}
        \mSS_n(t, \alpha_{+}, \alpha_{-}) = \mSS_n(t, \alpha_{+}, 0) + \mSS_n(t, 0, \alpha_{-})
    \end{equation*}
    we see that it suffices to consider the contributions from near $\fhPi$ and $-\fhPi$ separately.  A direct computation shows that
    \begin{equation*}\begin{split}
        \cF(L \mSS_n(t;\alpha_+, 0)) =& e^{2it\cos \xi} i \partial_\xi \left(e^{-2it\cos \xi} e^{it\left(2\xi + \frac{\xi - \fhPi)^3}{3}\right)} \rho(|t|^{1/5} (\xi - \fhPi)) \hat{\theta}(\xi - \fhPi, t; \alpha_{+}) \right)\\
        =& i e^{it \fhPi} e^{2it\cos\xi} \partial_\xi \left( E(\xi,t) \rho(|t|^{1/5} (\xi - \fhPi))\right) \hat{\theta}(\xi - \fhPi, t; \alpha_{+})\\
        &+ i e^{it \fhPi} e^{2it\cos\xi}  \left( E(\xi,t) \rho(|t|^{1/5} (\xi - \fhPi))\right) \partial_\xi\hat{\theta}(\xi - \fhPi, t; \alpha_{+})\\
        =:& \cF\left( \fL^+_n + \fC^+_n\right)
    \end{split}\end{equation*}
    where $E(\xi,t) = \exp\left(-it2\left(\cos \xi + (\xi - \fhPi) - \frac{\xi - \fhPi)^3}{6}\right)\right)$.  For $\fL^+_n$, arithmetic simplifications yield
    \begin{equation*}\begin{split}
        \cF(\fL^+_n) =& e^{-2it\xi} 2t \left(-\sin \xi + 1 - \frac{\left(\xi - \fhPi\right)^2}{2}\right) \rho\left(|t|^{\frac{1}{5}} \left(\xi - \fhPi\right)\right)\hat\fs(\xi - \fhPi, t; \alpha_{+})\\
        &+ i e^{-2it\xi} |t|^{\frac{1}{5}} \rho'\left(|t|^{\frac{1}{5}} \left(\xi - \fhPi\right)\right)\hat\fs(\xi - \fhPi, t; \alpha_{+})\\
        =:& \hat{\rmI} + \hat{\rmII}
    \end{split}\end{equation*}
    For the first term, we note that, since $\sin(\xi) = -1 + \frac{\left(\xi - \fhPi\right)^2}{2} + O\left( \left(\xi - \fhPi\right)^4\right)$, the frequency localization of $\fL^+_n$ implies that
    \begin{equation*}\begin{split}
        \lVert \rmI \rVert_{\ell^2} =& \lVert \hat{\rmI} \rVert_{L^2}\\
        \lesssim& \lVert \hat{s}(\xi,t;\alpha_\pm) \rVert_{L^\infty_\xi}\\
        \lesssim& |\alpha_{\pm}|
    \end{split}\end{equation*}
    and
    \begin{equation*}\begin{split}
        \lVert \rmII \rVert_{\ell^2} =& \lVert \hat{\rmII} \rVert_{L^2}\\
        \lesssim& \lVert \hat{s}(\xi,t;\alpha_\pm) \rVert_{L^\infty_\xi}\\
        \lesssim& |\alpha_{\pm}|
    \end{split}
    \end{equation*}
    which gives the first $\ell^2$ estimate for $\fL_n^+$.  For the $\ell^\infty$ estimates, we note that $\rmI$ and $\rmII$ can be seen as Fourier multipliers applied to $\fs$.  Thus, using the $L^\infty$ decay bounds for $Q_j^\tcts \fs$, we find that
    \begin{equation*}\begin{split}
        \lVert Q_j^+ \rmI \rVert_{\ell^\infty} \lesssim& t 2^{4j} \lVert \fs \rVert_{L^\infty} \lesssim |\alpha_{+}| t^{1/2} 2^{7/2j}
    \end{split}\end{equation*}
    and, since $\rmII$ is supported on the region $|\xi - \fhPi| \sim |t|^{-1/5}$, 
    \begin{equation*}\begin{split}
        \lVert \rmII \rVert_{\ell^\infty} \lesssim& \sum_{2^j \sim t^{-1/5}} |t|^{1/5} \lVert Q_j^+ \fs \rVert_{L^\infty} \lesssim |\alpha_{+}| t^{-1/5}
    \end{split}\end{equation*}
    which is enough to give the  $\ell^\infty$ estimates for $\fL^\pm$, $\Psi \fL^\pm$ and $Q_{\leq j}^\pm \fL^\pm$.  A similar argument gives the remaining bounds for $\chi_k^\pm \fL_n^\pm$ and $\chi_k^\pm \Psi \fL_n^\pm$.
    
    Turning to $\fC_n^+$, the identity~\eqref{eqn:cmkdv-self-sim-fourier-ident} gives us that
    \begin{equation*}
        \fC_n^+ = 6it Q_{\lesssim \log |t|^{1/5}}^\tcts |\fs|^2\fs(n - 2t;\alpha_+)
    \end{equation*}
    Thus,
    \begin{equation*}\begin{split}
        \lVert \fC_n^+ \rVert_{\ell^2} \lesssim& t \lVert \fs(x,t;\alpha_+) \rVert_{L^6}^3
        \lesssim |\alpha_+|^3 t^{1/6}
    \end{split}
    \end{equation*}
    which is enough to get the $\ell^2$ bound in~\eqref{eqn:fC-ests}.  The remaining $\ell^\infty$ estimates in~\eqref{eqn:fC-ests} follow from the estimates for $\fs$. For instance, we have
    \begin{equation*}\begin{split}
        \lVert Q_j^+ \fC^+_n \rVert_{\ell^\infty} \lesssim& t\lVert Q_j^\tcts |\fs|^2\fs \rVert_{L^\infty}
        \lesssim |\alpha_{+}|^3 t^{-1/2} 2^{-3/2j}
    \end{split}\end{equation*}
    which is enough to give the first three $\ell^\infty$ estimates, and, by~\Cref{lem:LP-pseudoloc-lemma}
    \begin{equation*}\begin{split}
        \lVert \chi_{k}^+ Q_j^+ \fC^+_n \rVert_{\ell^\infty} \lesssim& t\lVert \chi_k^\tcts Q_j^\tcts |\fs|^2\fs \rVert_{L^\infty}\\
        \lesssim& |\alpha_{+}|^3 t^{-1/2} 2^{-3/2k}
    \end{split}\end{equation*}
    which gives the last two inequalities.
\end{proof}


\subsection{Equation error for the approximate self-similar solution\label{sec:ss-eqn-error}}

To justify the name `approximate self-similar solution' for $\mSS_n$, we should verify that the error
\begin{equation}\label{eqn:R-n-def}
    R_n = \frac{d}{dt} \mSS_n(t; \alpha_+(t), \alpha_-(t)) -i\discrAvg \mSS_n + i|\mSS_n|^2 \discrAvg \mSS_{n}
\end{equation}
is small in some sense.  In this section, we will derive $\ell^2$ bounds for the error term.  (We will also need weighted $\ell^2$ bounds at a later stage in the proof, but since these require space-time resonance ideas, we postpone them until~\Cref{sec:weighted-l2-eqn-err}).  We begin by introducing the decomposition
\begin{equation*}
    R_n = R_n^+ + R_n^- + R_n^{\text{cross}}
\end{equation*}
where
\begin{equation}\label{eqn:R-n-pm-def}
    R_n^\pm = \frac{d}{dt} \mSS_n^\pm(t) - i \Psi \mSS_n^\pm + i |\mSS_n^\pm|^2 \Psi \mSS_n^\pm
\end{equation}
and
\begin{equation}\label{eqn:R-n-cross-def}
    R_n^{\text{cross}} = |\mSS_n|^2\Psi \mSS_n - |\mSS_n^+|^2\Psi \mSS_n^+ - |\mSS_n^-|^2\Psi \mSS_n^-
\end{equation}
We will show how to obtain bounds for the terms containing $R_n^+$.  The terms containing $R_n^-$ are identical (up to changing signs in the appropriate places), and the terms contained in $R_n^\text{cross}$ are easier to control because $\mSS_n^+$ and $\mSS_n^-$ are concentrated on sets which become increasingly separated as $t \to \infty$, producing additional decay.  Notice that
\begin{equation*}\begin{split}
    \partial_t \hat{\mSS^+}(\xi + \fhPi) =& \partial_t\left(\rho(t^{1/5}\xi)e^{-2i\xi t}\right)\hat{\fs}(\xi, t; \alpha_+(t))\\
    &+\rho(t^{1/5}\xi)e^{-2i\xi t}[D_\alpha\hat{\fs}(\xi, t; \alpha_+(t))]\cdot \partial_t \alpha_+(t)\\
    &+\rho(t^{1/5}\xi)e^{-2i\xi t}\partial_t\hat{\fs}(\xi - \pi/2, t; \alpha_+(t))\\
\end{split}\end{equation*}
Using~\eqref{eqn:cmkdv-self-sim} to expand $\partial_t \hat{\fs}$ and substituting into the expression for $R_n^+$, we obtain terms which are linear in $\fs$, a modulation term containing $D_\alpha \fs$, and terms which are cubic in $\fs$.  Motivated by this, we introduce the further decomposition
\begin{equation}\label{eqn:Rn-decomp}
    R_n^+ = R_n^{\text{lin},+} + R_n^{\text{mod},+} + R_n^{\text{cubic},+}
\end{equation}
where
\begin{align}
    \begin{split}\hat{R}^{\text{lin},+}(\xi+\pi/2) =& \biggl(\frac{t^{-4/5}}{5}\xi \rho'(t^{1/5}\xi) - 2i\left(\cos (\xi+\pi/2) + \xi - \frac{\xi^3}{6}\right)\rho(t^{1/5}\xi) \biggr) e^{-2i\xi t}\hat{\fs}(\xi,t;\alpha_+)\end{split}\label{eqn:R-n-lin-def}\\
    \hat{R}^{\text{mod},+}(\xi+\pi/2) =& \rho(t^{1/5}\xi) e^{-2i\xi t} [D_{\alpha} \hat{\fs}(\xi,t;\alpha_+)] \cdot \partial_t \alpha_+\label{eqn:R-n-mod-def}\\
    \hat{R}^{\text{cubic},+}(\xi+\pi/2) =& i \mathcal{F}(|\mSS_n^+|^2 \Psi \mSS_n^+)(\xi+\pi/2) + 2 \rho(t^{1/5}(\xi) e^{-2i\xi t} \mathcal{F}(|\fs|^2\partial_x \fs)(\xi+\pi/2)\label{eqn:R-n-cubic-def}
\end{align}
with $[D_\alpha \hat{\fs}] \cdot \partial_t \alpha$ interpreted in the same sense as in~\eqref{eqn:D-alpha-def}.  We now handle each term separately.
\subsubsection{The linear error term}
Observe that $R_n^{\text{lin}, +}$ has a similar form to $\fL_n^+$.  Thus, by adapting the reasoning used to prove~\eqref{eqn:fL-ests}, we find that
\begin{equation}\label{eqn:R_n-lin-ests}\begin{split}
\lVert \chi_{\sim k}^+ Q_{\lesssim j}^+ R_n^{\text{lin},+}\rVert_{\ell^2} \lesssim& \begin{cases}
        \epsilon 2^{11/2k} & k \leq j+20\\
        \epsilon t^{-1/3} 2^{\frac{11}{2} j - k} & k > j + 20
    \end{cases}\\
    \lVert \chi_{\sim k} R_n^{\text{lin},+} \rVert_{\ell^2} \lesssim& \begin{cases}
        \epsilon  2^{11/2k} & 2^k \lesssim s^{-1/5}\\
        \epsilon t^{-\frac{43}{30}} 2^{-k} & 2^k \gg s^{-1/5} 
    \end{cases}
\end{split}\end{equation}

\subsubsection{The modulation error term}
Here, we simply use the $L^\infty$ estimate from~\eqref{eqn:D-alpha-fs-bdds} to obtain the bounds
\begin{equation}\label{eqn:R_n-mod-ests}\begin{split}
    \lVert Q_{\lesssim j}^+ R_n^{\text{mod},+} \rVert_{\ell^2} \lesssim& 2^{j/2} \ln(2 + t^{1/3} 2^{j}) |\partial_t \alpha_+|\\
    \lVert \chi_{\sim k}^+ Q_{\lesssim j}^+ R_n^{\text{mod},+} \rVert_{\ell^2} \lesssim&\begin{cases}
        2^{k/2} \ln(2 + t^{1/3} 2^{k}) |\partial_t \alpha_+| & k \leq j + 20\\
        t^{-1/3} 2^{3/2j - 2k}\ln(2 + t^{1/3} 2^{j}) |\partial_t \alpha_+| & k > j + 20
    \end{cases}\\
    \lVert \chi_{\sim k}^+ R_n^{\text{mod},+} \rVert_{\ell^2} \lesssim&\begin{cases}
        2^{k/2} \ln(2 + t^{1/3} 2^{k}) |\partial_t \alpha_+| & 2^k \lesssim t^{-1/5}\\
        t^{-\frac{19}{30}} 2^{- 2k}\ln(2 + t^{\frac{2}{15}}) |\partial_t \alpha_+| & 2^k \gg t^{-1/5}
    \end{cases}\\
    \lVert \chi_{< j - 20}^+ Q_{\sim j}^+ R_n^{\text{mod},+} \rVert_{\ell^2} \lesssim& t^{-\frac{1}{3}} 2^{-j/2} \ln(2 + t^{1/3} 2^{j})|\partial_t \alpha_+|
\end{split}\end{equation}

\subsubsection{The cubic error term}
Finally, we consider the cubic error terms.  Defining 
\begin{equation*}
    m^R = 2\left(\cos (\sigma + \fhPi) \rho(t^{\frac{1}{5}}\eta) \rho(t^{\frac{1}{5}}(-\xi + \eta +\sigma)) \rho(t^{\frac{1}{5}}\sigma) + \sigma\rho(t^{\frac{1}{5}}\xi)\right)
\end{equation*}
we see that we can write $\hat{R}$ in terms of a cubic pseudoproduct involving $\fs^+$:
\begin{equation*}
    \hat{R}^{\text{cubic},+}(\xi + \fhPi) = ie^{-2i\xi t} \hat{T}_{m^R}(\fs^+, \fs^+, \overline{\fs^+})
\end{equation*}
Introducing the further decomposition $m^R = m^{R,1} + m^{R,2} + m^{R,3}$ with
\begin{equation*}\begin{split}
    m^{R,1} =& 2\SSCutoff\bigl(1-\SSCutoff[\eta]\SSCutoff[\sigma]\SSCutoff[(\xi - \eta - \sigma)] \bigr) \sigma\\
    m^{R,2} =& 2(\SSCutoff- 1) \SSCutoff[\eta]\SSCutoff[\sigma]\SSCutoff[(\xi - \eta - \sigma)] \sigma\\
    m^{R,3} =& 2\SSCutoff[\eta]\SSCutoff[\sigma]\SSCutoff[(\xi - \eta - \sigma)](\sigma - \sin \sigma)
\end{split}\end{equation*}
we can write
\begin{equation}\label{eqn:R-n-cubic-division}\begin{split}
    \hat{R}^{\text{cubic},+}(\xi + \fhPi)  =& ie^{-2i\xi t}\hat{T}_{m^{R, 1}}(\fs^+, \fs^+, \overline{\fs^+}) + ie^{-2i\xi t}\hat{T}_{m^{R,2}}(\fs^+, \fs^+, \overline{\fs^+}) +ie^{-2i\xi t}\hat{T}_{m^{R,3}}(\fs^+, \fs^+, \overline{\fs^+})\\
    =:& \hat{R}^{c, 1}(\xi + \fhPi) + \hat{R}^{c, 2}(\xi + \fhPi) + \hat{R}^{c, 3}(\xi + \fhPi)
\end{split}\end{equation}
\begin{rmk}
    Note that in~\eqref{eqn:R-n-cubic-division}, the left-hand side is a function defined for $\xi \in \bbT$, while the right-hand side is defined for $\xi \in \bbR$.  However, each of the symbols $m^{R,a}$ is compactly supported in $(-\pi, \pi)$, so we interpret~\eqref{eqn:R-n-cubic-division} by associating each $\xi \in \bbT = \bbR / \bbZ$ with its representative in the fundamental domain $(-\pi,\pi]$ and interpreting the equality pointwise almost everywhere for such $\xi$.  Since we will only be interested in $\ell^2$-based estimates, Plancherel's theorem assures us that it makes no difference whether we work in $\bbT$ or $\bbR$.
\end{rmk}

For $\Rncp{1}$, notice that $m^{R,1}$ is supported on the region $\max\{|\eta|,|\sigma|,|\xi - \eta - \sigma|\} \gtrsim t^{-1/5}$, so we can write
\begin{equation*}
    \Rncp{1} = (i)^n \sum_{2^\ell \gtrsim s^{-1/5}} 2^\ell T_{m_\ell}(Q_{\sim \ell}^\tcts\fs, \fs, \overline{\fs})(n - 2t) + \bsim
\end{equation*}
Similarly, for~$\Rncp{2}$, the symbol $m^{R,2}$ is supported in the region $|\xi| \sim \max\{|\eta|, |\sigma|, |\xi - \eta - \sigma|\} \sim s^{-\frac{1}{5}}$, so we can write
\begin{equation*}
    \Rncp{2} = (i)^n \sum_{2^\ell \sim t^{-1/5}} 2^\ell Q_{\sim \ell}^\tcts {T}_{m_\ell}(Q_{\sim \ell}^\tcts \fs, \fs, \overline{\fs})(n-2t) + \bsim
\end{equation*}
By using the decay properties of $\fs$, we obtain the estimate
\begin{equation}\label{eqn:rncp-1-2-decay-ests}\begin{split}
    \lVert \chi_{k}^+ (\Rncp{1} + \Rncp{2}) \rVert_{\ell^2} \lesssim& \sum_{2^\ell \gtrsim t^{-1/5}} 2^\ell \lVert \chi_{k}^\tcts T_{m_\ell}(Q_{\sim \ell}^\tcts\fs, \fs, \overline{\fs})\rVert_{L^2}\\
    \lesssim& \lVert \chi_{k}^\tcts T_{m_\ell} (\chi_{\sim k}^\tcts Q_{\sim \ell}^\tcts\fs, \chi_{\sim k}^\tcts\fs, \chi_{\sim k}^\tcts\overline{\fs})\rVert_{L^2} + \bbetter\\
    \lesssim& \begin{cases}
        \epsilon^3 t^{-\frac{37}{30}} & 2^k \ll t^{-1/5}\\
        \epsilon^3 t^{-\frac{11}{10}} & 2^k \sim t^{-1/5}\\
        \epsilon^3 t^{-4/3} 2^{-k/2} & 2^k \gg t^{-1/5}
    \end{cases}
\end{split}\end{equation}
Turning to $\Rncp{3}$, we observe that the symbol $m^{R,3}$ is supported on the region $|\xi| + |\eta| +| \sigma| \lesssim s^{-1/5}$, and decays like $|\sigma|^3$.  Thus, we can write
\begin{equation*}
    (i)^n\Rncp{3} = \sum_{2^\ell \lesssim t^{-1/5}}2^{3\ell} {T}_{m_\ell}(\fs,\fs,\overline{\fs})(n-2t)
\end{equation*}
which gives the estimate
\begin{equation}\label{eqn:rncp-3-decay-bds}\begin{split}
    \lVert \chi^+_k \Rncp{3} \rVert_{\ell^2} \lesssim& \epsilon^3 t^{-8/5} 2^{-k/2}\\
    \lVert \Rncp{3} \rVert_{\ell^2} \lesssim& \epsilon^3 t^{-\frac{19}{15}}
\end{split}\end{equation}
Combining all of these estimates, we see that
\begin{equation}\label{eqn:R-cubic-decay-bds}\begin{split}
    \lVert R_n^{\text{cubic},+} \rVert_{\ell^2} \lesssim& \epsilon^3 t^{-\frac{11}{10}}\\
    \lVert \chi_k^+ R_n^{\text{cubic},+} \rVert_{\ell^2} \lesssim& \begin{cases}
        \epsilon^3 t^{-\frac{37}{30}} & 2^k \ll t^{-1/5}\\
        \epsilon^3 t^{-\frac{11}{10}} & 2^k \sim t^{-1/5}\\
        \epsilon^3 t^{-4/3} 2^{-k/2} & 2^k \gg t^{-1/5}
    \end{cases}
\end{split}\end{equation}

\section{Bootstrap and basic estimates\label{sec:prof-red}} 

\subsection{Reduction to profile estimates}
We are now in a position to begin proving~\Cref{thm:main-thm-AL}.  Let us define 
\begin{align*}
    \begin{aligned}
    f_n(t) =& \invLinProp u_n(t)\\
    \alpha_+(t) =& \hat{f}(\fhPi,t)\\
    \alpha_-(t) =& \hat{f}(-\fhPi,t)
    \end{aligned}\qquad\qquad\qquad
    \begin{aligned}
        w_n(t) =& u_n(t) - \mSS_n(t; \alpha_+(t), \alpha_-(t))\\
        g_n(t) =& \invLinProp  w_n(t)\\ 
        h_n(t) =& \invLinProp  \mSS_n(t; \alpha_+(t), \alpha_-(t))
        \end{aligned}
\end{align*}
Then,~\Cref{thm:main-thm-AL} follows from the following theorem:
\begin{thm}\label{thm:prof-thm-A-L}
    There exists an $\epsilon_0 > 0$ such that for all $\epsilon \leq \epsilon_0$, if $\lVert \jBra{n} u^*_n \rVert_{\ell^2} \leq \epsilon$, then the solution to~\eqref{eqn:AL-eqn-rotated} exists for all $t \geq 1$ and satisfies the bounds
    \begin{align}
        \lVert n g_n(t) \rVert_{\ell^2} \lesssim \epsilon t^{\frac{1}{10}}\label{eqn:ngn-prof-est}\\ 
        \lVert \hat{f}(t) \rVert_{L^\infty} \lesssim \epsilon\label{eqn:f-hat-prof-est}
    \end{align}
    Moreover,
    \begin{equation}\label{eqn:alpha-pm-prof-est}
        |\partial_t \alpha_\pm(t)| \lesssim \epsilon^3 t^{-\frac{16}{15}}
    \end{equation}
    and there exists a bounded function $f_\infty$ such that
    \begin{equation}\label{eqn:f-hat-prof-asymp}
        \hat{f}(\xi,t) = \exp\left(\frac{i\sgn(\cos(\xi)}{4} \int_1^t \frac{|\hat{f}(\xi,s)|^2}{s}\;ds\right) f_\infty(\xi) + \bigO{\epsilon^3 |t^{\frac{1}{3}}\cos\xi|^{-\stPhErr}}
    \end{equation}
\end{thm}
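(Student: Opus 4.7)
The plan is a continuity/bootstrap argument. Local well-posedness for~\eqref{eqn:AL-eqn-rotated} provides a solution on an initial interval, and I run a bootstrap on the maximal time $T$ on which~\eqref{eqn:ngn-prof-est},~\eqref{eqn:f-hat-prof-est}, and~\eqref{eqn:alpha-pm-prof-est} hold with their constants doubled. Each bound will be upgraded to its stated form so that continuity yields $T = \infty$. The bootstrap assumptions, combined with the bounds on $\mSS_n$ from~\Cref{lem:LSS-expression} and the decomposition $f_n = g_n + h_n$, imply $\lVert f_n\rVert_X \lesssim \epsilon$, so the linear decay from~\Cref{thm:lin-decay-al} together with the bilinear bounds from~\Cref{cor:decay-estim} and~\Cref{cor:bilin-sin-est} apply to $u_n$ throughout the argument.

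The central estimate is~\eqref{eqn:ngn-prof-est}, treated as outlined in Step~3 of the introduction. Differentiating $\lVert\partial_\xi \hat g\rVert_{L^2}^2$ in time and using the equation for $g_n$ together with the error $R_n$, I will write the result as in~\eqref{eqn:intro-duhamel-division}. The pseudoproduct piece is controlled by transferring the null structure from the weighted factor to the unweighted $|u_n|^2$ factor, using the identity behind~\Cref{cor:bilin-sin-est}. The phase-derivative piece is split via a partition of unity into the three regimes determined by the resonance sets~\eqref{eqn:space-time-resonance-sets}. The away-from-$\cS$ part is handled by integration by parts in $(\eta,\sigma)$ and~\Cref{thm:multiplier-lemma}; the away-from-$\cT$ part by integration by parts in $s$, generating the self-referential term $J$ that must be reabsorbed through a second integration by parts as in~\eqref{eqn:J-ident}; and the near-$\cR$ part by the $t^{-1/3}$ volume of the cut-off combined with the vanishing of $\hat g$ at $\pm\fhPi$. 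The contributions of $R_n$ enter as analogous terms controlled by the bounds from Section~\ref{sec:ss-eqn-error}. A Gr\"onwall argument then closes~\eqref{eqn:ngn-prof-est}.

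For~\eqref{eqn:alpha-pm-prof-est} I use the decomposition~\eqref{eqn:f-hat-degen-decomp}: the pure $\mSS$ contribution is $O(t^{-6/5})$ by the self-similar identity, while mixed terms carry a factor of $w_n$ whose dispersive decay at the degenerate frequencies is improved via Bernstein and the weighted bound~\eqref{eqn:ngn-prof-est}. For~\eqref{eqn:f-hat-prof-est}, stationary phase on~\eqref{eqn:dt-f-hat} reduces the equation to
\begin{equation*}
    \partial_t \hat f(\xi,t) = \frac{i\,\sgn(\cos\xi)}{4t}|\hat f(\xi,t)|^2 \hat f(\xi,t) + \bbetter
\end{equation*}
away from $\pm \fhPi$, as in Step~1, while near $\pm\fhPi$ the bound~\eqref{eqn:alpha-pm-prof-est} is used directly. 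Since the leading term is purely imaginary, $|\hat f|$ is essentially conserved, improving~\eqref{eqn:f-hat-prof-est}. The asymptotic formula~\eqref{eqn:f-hat-prof-asymp} then follows by factoring out the logarithmic phase and showing the resulting profile has integrable time derivative, with the error rate $|t^{1/3}\cos \xi|^{-\stPhErr}$ tracking the sharp stationary-phase error at distance $|\cos\xi|$ from the degenerate frequencies.

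The principal obstacle is the weighted $\ell^2$ estimate~\eqref{eqn:ngn-prof-est}: closing the bootstrap demands both the delicate cancellation that absorbs $J$ in the time-resonant piece and the null-structure identity transferring the $\cos$ weight between factors, mirroring the complex mKdV argument of~\cite{stewartLongTimeDecay2021} but with additional care because $\mSS_n$ is only an approximate solution, so the $R_n$ error terms must be tracked through each layer of the argument.
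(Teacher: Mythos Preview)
Your proposal is correct and follows essentially the same approach as the paper: a continuity/bootstrap argument in which the weighted $\ell^2$ estimate~\eqref{eqn:ngn-prof-est} is established via the space-time resonance decomposition (with the time-resonant self-referential term reabsorbed as in~\eqref{eqn:J-ident} and the $R_n$ errors tracked through the same machinery), then~\eqref{eqn:alpha-pm-prof-est} via the self-similar cancellation and improved decay of $w_n$, and finally~\eqref{eqn:f-hat-prof-est} and~\eqref{eqn:f-hat-prof-asymp} by stationary phase. The paper's only structural difference is the explicit use of an auxiliary large parameter $M$ in the bootstrap hypotheses~\eqref{eqn:bootstrap-hyps-al} rather than simply doubling constants, which makes the gain at each step more transparent but is logically equivalent to what you describe.
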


We now show how~\Cref{thm:main-thm-AL} follows from~\Cref{thm:prof-thm-A-L}:
\begin{proof}[Proof of~\Cref{thm:main-thm-AL} assuming~\Cref{thm:prof-thm-A-L}]
    Note that~\eqref{eqn:f-hat-prof-est} implies that $|\alpha_\pm(t)| \lesssim \epsilon$.  Thus, the estimates from~\Cref{lem:LSS-expression} combined with~\eqref{eqn:ngn-prof-est} imply that $\lVert n f_n \rVert_{\ell^2} \lesssim \epsilon t^{1/6}$.  It follows that $u_n(t)$ obeys the linear estimates given in~\Cref{thm:lin-decay-al}.  This immediately verifies the decay estimate~\eqref{eqn:main-thm-al-decay-rgn}.  The estimate~\eqref{eqn:main-thm-al-osc-rgn} also follows once we use~\eqref{eqn:f-hat-prof-asymp}.  Thus, it only remains to verify the estimate~\eqref{eqn:main-thm-al-ss-rgn}.  Suppose that $n \mp 2t \lesssim t^{-1/3}$.  Then, we can write
    \begin{equation*}\begin{split}
        |u_n(t) - \fs(n \mp 2t, t; \alpha_{\pm})| \leq& |u_n(t) - Q_{\lesssim \log t^{-1/5}}^\pm u_n(t)|\\ 
        &+ |\fs(n - 2t, t; \alpha_\pm(t)) - Q_{\lesssim \log t^{-1/5}} \mSS_n(t;\alpha_{+}(t),\alpha_{-}(t))|\\ 
        &+ |Q_{\lesssim \log t^{-1/5}} w| + |\fs(n - 2t, t; \alpha_{\pm}(t)) - \fs(n - 2t, t; \alpha_{\pm,\infty})|
    \end{split}\end{equation*}
    where $\alpha_{\pm,\infty} = \lim_{t \to \infty} \alpha_\pm(t)$ (the limit must exist by~\eqref{eqn:alpha-pm-prof-est}).   From the linear estimates for $u_n$, we see that
    \begin{equation*}
        \lVert u_n(t) - Q_{\lesssim \log t^{-1/5}}^\pm u_n(t)\rVert_{\ell^\infty} \lesssim \epsilon t^{-2/5}
    \end{equation*}
    Similar reasoning allows us to conclude that 
    \begin{equation*}
        \lVert \fs(n - 2t, t; \alpha_{\pm}(t)) - Q_{\lesssim \log t^{-1/5}}^\pm \mSS_n(t;\alpha_+(t),\alpha_-(t))\rVert_{\ell^\infty} \lesssim \epsilon t^{-2/5}
    \end{equation*}
    once we observe that $Q_{\lesssim \log t^{-1/5}}^\pm \mSS_n(t;\alpha_+(t),\alpha_-(t)) = Q_{\lesssim \log t^{-1/5}}^\tcts \fs(n \mp 2t,t; \alpha_\pm)$ and use the decay bounds given by~\eqref{eqn:fs-decay-bdds}.  For the $w_n$ term, we observe that $|\hat{g}(\xi,t)| \lesssim \lVert ng_n \rVert_{\ell^2} |\xi|^{1/2}$, so~\Cref{thm:lin-decay-al} gives us that
    \begin{equation*}\begin{split}
        \lVert Q_{\lesssim \log t^{-\frac{1}{5}}}^\pm w_n \rVert_{\ell^\infty} \lesssim& \lVert n g_n \rVert_{\ell^2} t^{-1/2}\\
        \lesssim& \epsilon t^{-2/5}
    \end{split}\end{equation*}
    Finally,~\eqref{eqn:alpha-pm-prof-est} implies that $|\alpha_\pm(t) - \alpha_{\pm,\infty}| \lesssim \epsilon^3 t^{-\frac{1}{15}}$, so we find that
    \begin{equation*}\begin{split}
        |\fs(n - 2t, t; \alpha_{\pm}(t)) - \fs(n - 2t, t; \alpha_{\pm,\infty})| \lesssim& \sum_{t^{-1/3} \leq 2^j \lesssim t^{-1/5}} \lVert D_\alpha \fs \rVert_{\ell^\infty} |\alpha_\pm(t) - \alpha_{\pm,\infty}|\\
        \lesssim& \epsilon t^{-2/5}
    \end{split}
    \end{equation*}
    by~\eqref{eqn:D-alpha-fs-bdds}.
\end{proof}

\subsection{The bootstrap argument}

It only remains to prove~\Cref{thm:prof-thm-A-L}.  We now show that~\Cref{thm:prof-thm-A-L} follows from a certain bootstrap estimate.  The remainder of the paper is then devoted to verifying that this bootstrap estimate holds.

We first show that we can construct a local solution to~\eqref{eqn:AL-eqn-rotated} with $\lVert \hat{f} \rVert_{L^\infty}$ and $\lVert n f_n \rVert_{\ell^2}$ finite.  Note that by assumption $u_n^* \in \ell^2$.  Thus, it is straightforward to use Picard iteration to construct a local solution $u_n(t) \in C([1,T],\ell^2)$ for some $T > 1$.  (In fact, we can take this solution to be global using a conservation law argument, but this is not important for us here.)  Moreover, using the embedding $\ell^2 \hookrightarrow \ell^\infty$, we see that
\begin{equation*}
    \lVert n |u_n|^2 \Psi u_n \rVert_{\ell^2}  \lesssim (1 + \lVert u_n(t) \rVert_{\ell^2}^2) \lVert n u_n \rVert_{\ell^2}
\end{equation*}
Thus, we can upgrade this solution to a solution in $C([1,T], \jBra{n}^{-1} \ell^2)$.  By using the commutator identity $[n, e^{it\Psi}] = 2it \Psi'e^{it\Psi}$ where $\Psi' a_n= a_{n-1} - a_{n+1}$, we see that $n f_n \in C([1,T];\ell^2)$, as well.  The Sobolev embedding then implies that $\hat{f} \in C([1,T], L^\infty)$.   We also note that for this solution, 
\begin{equation*}
    |\partial_t \alpha^{\pm}| \leq \lVert |u_n|^2 \Psi u_n \rVert_{\ell^1} \lesssim \lVert u_n \rVert_{\ell^2}^3 \lesssim \lVert \hat{f} \rVert_{L^\infty}^3
\end{equation*}
which can be used to show that $\partial_t \alpha_\pm \in C([1,T])$.

In particular, all of the above quantities are continuous in time, so by taking $T$ sufficiently close to $1$, we can guarantee that
\begin{equation}\label{eqn:bootstrap-hyps-al}\begin{split}
    \sup_{1 \leq t \leq T} \left( \lVert \hat{f} \rVert_{L^\infty} + t^{-1/6} \lVert n f_n \rVert_{\ell^2} \right) \leq& CM\epsilon\\
    \sup_{1 \leq t \leq T} t^{\frac{16}{15}}|\alpha_{\pm}(t)| \leq& CM^3 \epsilon^3 
\end{split}
\end{equation}
where $C$ is some constant and $M \gg 1$ is a large fixed constant.  Intuitively, $C$ represents the constants that occur in the strong bounds~\eqref{eqn:bootstrap-hyps-al-improved}, while $M$ has the effect of producing weaker bounds which we use to start the bootstrap.  Assuming that $\epsilon \ll M^{-2}$, we will show that $\lVert n g_n \rVert_{\ell^2} \leq C \epsilon t^{\frac{1}{10}}$ for $1 \leq t \leq T$ in~\Cref{sec:n-ell2}.  Using this bound together with the bootstrap hypotheses, we will then show in~\Cref{sec:degen-freq-decay} that
\begin{equation*}
    |\partial_t \alpha_\pm(t)| \leq CM^2 \epsilon^3 t^{-\frac{16}{15}}
\end{equation*}
for $1 \leq t \leq T$ and that $\sup_{1 \leq t \leq T}\lVert \hat{f}(\xi,t) \rVert_{L^\infty_\xi} \leq C\epsilon$ in~\Cref{sec:L-infty}.  Combining these estimates, we can see that we have the improved bounds
\begin{equation}\label{eqn:bootstrap-hyps-al-improved}\begin{split}
    \sup_{1 \leq t \leq T} \left( \lVert \hat{f} \rVert_{L^\infty} + t^{-1/6} \lVert n f_n \rVert_{\ell^2} \right) \leq& 2C\epsilon\\
    \sup_{1 \leq t \leq T} t^{\frac{16}{15}} |\alpha_{\pm}(t)| \leq& CM^2 \epsilon^3 
\end{split}
\end{equation}
It follows that the bootstrap estimates~\eqref{eqn:bootstrap-hyps-al} actually hold up to a time $T' > T$.  Since $T$ was arbitrary, a continuity argument then shows that~\eqref{eqn:bootstrap-hyps-al-improved} holds for all time, so the solution is global and satisfies~\cref{eqn:ngn-prof-est,eqn:f-hat-prof-est,eqn:alpha-pm-prof-est}.  Moreover, as byproduct of our proof that $\hat{f}(\xi,t)$ is bounded, we will prove~\eqref{eqn:f-hat-prof-asymp}, completing the proof of~\Cref{thm:prof-thm-A-L}.

\subsection{Basic estimates for \texorpdfstring{$w_n$}{w\_n}}

We conclude this section by recording estimates for $w_n$ that we will use extensively in the remainder of the paper.  To begin, we observe that 
\begin{equation*}
    \lVert \hat{g} \rVert_{L^\infty} \leq \lVert \hat{f} \rVert_{L^\infty} + \lVert \hat{h} \rVert_{L^\infty} \lesssim M\epsilon
\end{equation*}
In addition, by the Sobolev embedding, $\hat{g}$ is $\frac{1}{2}$-H\"older in $\xi$, and by construction $\hat{g}(0) = 0$, so
\begin{equation}\label{eqn:hat-g-Holder}
    |\hat{g}(\xi)| \lesssim |\xi|^{\frac{1}{2}} \lVert n g_n \rVert_{\ell^2}
\end{equation}
Thus,~\Cref{thm:lin-decay-al} gives us the following $\ell^\infty$ decay estimates for $w_n$:
\begin{equation}\label{eqn:w-ellinf-al}\begin{split}
    \lVert w_n \rVert_{\ell^\infty} \lesssim& t^{-1/2} \lVert n g_n \rVert_{\ell^2}\\
    \lVert \chi^\pm_k w_n \rVert_{\ell^\infty} \lesssim& t^{-1/2} \lVert n g_n \rVert_{\ell^2} c_k\\ 
    \lVert \chi^\pm_k \Psi w_n \rVert_{\ell^\infty} \lesssim& t^{-1/2} 2^k \lVert n g_n \rVert_{\ell^2} c_k\\ 
    \lVert Q^\pm_j \Psi w_n \rVert_{\ell^\infty} \lesssim& t^{-1/2} 2^j \lVert ng_n \rVert_{\ell^2} c_j\\ 
    \lVert \chi_k^\pm Q_{\leq j}^\pm w_n \rVert_{\ell^\infty} \lesssim& t^{-5/6} 2^{j - 2k} \lVert ng_n \rVert_{\ell^2}\qquad\qquad  \text{for } k > j + 10\\
    \lVert \chi_{< j - 10}^\pm Q_{j}^\pm w_n \rVert_{\ell^\infty} \lesssim& t^{-5/6} 2^{-j} \lVert ng_n \rVert_{\ell^2}\\
    \lVert \chi_k^\pm Q_{j}^\pm w_n \rVert_{\ell^\infty} \lesssim& t^{-5/6} 2^{j - 2k} \lVert ng_n \rVert_{\ell^2}\qquad\qquad  \text{for } k > j + 10\\
    \lVert \chi_k^\pm Q_{\leq j}^\pm \Psi w_n \rVert_{\ell^\infty} \lesssim& t^{-5/6} 2^{2j - 2k} \lVert ng_n \rVert_{\ell^2}c_k \qquad\qquad \text{for } k > j + 10
\end{split}\end{equation}
By using these estimates or~\eqref{eqn:hat-g-Holder}, we can derive the following $\ell^2$ estimates
\begin{equation}\label{eqn:w-ell2-al}\begin{split}
    \lVert Q_{\leq j}^\pm w_n \rVert_{\ell^2} \lesssim& 2^j \lVert ng_n \rVert_{\ell^2}\\ 
    \lVert Q^\pm_j w_n \rVert_{\ell^2} \lesssim& 2^j \lVert ng_n \rVert_{\ell^2} c_j\\ 
    \lVert \chi^\pm_k w_n \rVert_{\ell^2} \lesssim& 2^k \lVert ng_n \rVert_{\ell^2} c_k\\
    \lVert \chi_k^\pm Q_{\leq j}^\pm w_n \rVert_{\ell^2} \lesssim& t^{-1/3} 2^{j - k} \lVert ng_n \rVert_{\ell^2} \qquad\qquad k > j + 10\\ 
    \lVert \chi_{>j + 10}^\pm Q_j^\pm w_n \rVert_{\ell^2} \lesssim& t^{-1/3} \lVert ng_n \rVert_{\ell^2}\\ 
    \lVert (1 - \chi^\pm_{\sim j}) Q^\pm_j w_n \rVert_{\ell^2} \lesssim& t^{-1/3} \lVert ng_n \rVert_{\ell^2} c_j
\end{split}
\end{equation}
			
\section{The weighted \texorpdfstring{$\ell^2$}{l2} estimate\label{sec:n-ell2}}

\subsection{Set-up and basic estimates}

We now turn to the task of obtaining weighted $\ell^2$ estimates for $g_n$.  In order to close the bootstrap, we will prove an energy estimate of the form
\begin{equation}\label{eqn:n-gn-desired-est}\begin{split}
    \lVert n g_n (t) \rVert_{\ell^2}^2 \lesssim M^2\epsilon^4t^{\frac{1}{5}} + M^2\epsilon^2 \lVert ng_n \rVert_{\ell^2}^2 + \int_1^t M^2 \epsilon^2 s^{-1} \lVert ng_n (s)  \rVert_{\ell^2}^2 \;ds
\end{split}\end{equation}
which gives the desired estimate for $\epsilon \ll M^{-2}$ after applying Gr\"onwall's inequality.  Observe that
\begin{equation}\label{eqn:n-gn-deriv}\begin{split}
    \partial_t ng =& n \invLinProp (\partial_t - i\discrAvg) (u_n - \mSS_n) \\
    =& -i\invLinProp(|u_n|^2 \Psi' w_n + 2\Re(w_n \overline{u_n})\Psi' \mSS_n\\
    &-i \invLinProp \left(|u_n|^2 \Psi Lw_{n} + 2\Re(w_n \overline{u_n})\Psi L\mSS_{n}\right) \\
    &-i T_{\partial_\xi e^{it\phi}}(f_n, \Psi g_n, \overline{f}_n) + T_{\partial_\xi e^{it\phi}}(f_n, \Psi h_{n}, \overline{g}_n) + T_{\partial_\xi e^{it\phi}}(g_n, \Psi h_{n}, \overline{f}_n) \\
    &-i n\invLinProp R_n
\end{split}\end{equation}
where $\Psi' v_n = v_{n-1} - v_{n+1}$, $L$ is the operator defined in~\eqref{eqn:L-def}, and $R_n$ is the error term introduced in~\eqref{eqn:R-n-def}.  Using~\eqref{eqn:n-gn-deriv}, we find that
\begin{subequations}\begin{align}
    \frac{1}{2} \frac{d}{dt} \lVert ng_n \rVert_{\ell^2}^2 =& \Im \left \langle |u_n|^2 \Psi' w_{n} + 2\Re(w_n \overline{u_n})\Psi'\mSS_{n}, Lw_n \right\rangle \label{eqn:ng-deriv-commutator}\\
    & + \Im\left \langle |u_n|^2 \discrAvg Lw_{n}, Lw_n\right\rangle \label{eqn:ng-deriv-weighted}\\
    &+2\Im \left\langle\Re(w_n \overline{u_n}) \discrAvg L\mSS_{n}, Lw_n\right\rangle \label{eqn:ng-deriv-psi-L-SS}\\
    \begin{split}
    &+t\Re \left\langle T_{\partial_\xi\phi e^{it\phi}}(f_n, \discrAvg g_{n}, \overline{f}_n) + T_{\partial_\xi e^{it\phi}}(f_n, \discrAvg h_{n}, \overline{g}_n) + T_{\partial_\xi e^{it\phi}}(g_n, \discrAvg h_{n}, \overline{f}_n), ng_n \right\rangle \label{eqn:ng-deriv-str-terms}
    \end{split}\\
    &+ \Im \left\langle L R_n, Lw_n \right\rangle \label{eqn:ng-deriv-eqn-error}
\end{align}\end{subequations}

We will now estimate terms~\Cref{eqn:ng-deriv-commutator,eqn:ng-deriv-weighted,eqn:ng-deriv-psi-L-SS}.  The term~\eqref{eqn:ng-deriv-str-terms} will be estimated in~\cref{sec:weighted-l2-str} and~\eqref{eqn:ng-deriv-eqn-error} will be estimated in~\cref{sec:weighted-l2-eqn-err}.  For~\eqref{eqn:ng-deriv-commutator}, we can use the linear estimates for $u_n$ and the estimates~\eqref{eqn:mssn-lin-ests} for $\mSS_n$ together with the  $\ell^2$ estimates for $w_n$ given in~\eqref{eqn:w-ell2-al} to find that
\begin{equation*}\begin{split}
    |\eqref{eqn:ng-deriv-commutator}| \lesssim& \left(\lVert |u_n|^2 \Psi'w_{n} \rVert_{\ell^2} + \lVert u_n \overline{w_n} \Psi'\mSS_{n}\rVert_{\ell^2}\right) \lVert Lw_n \rVert_{\ell^2}\\
    \lesssim& \lVert ng_n \rVert_{\ell^2}\left( \left(\sum_{k} \lVert \chi_{k}^+ |u_n|^2 \Psi'w_{n} \rVert_{\ell^2}^2\right)^{1/2} + \left( \sum_{k} \lVert \chi_{k}^+ u_n \overline{w_n} \Psi'\mSS_{n}\rVert_{\ell^2}^2\right)^{1/2}\right) + \bsim\\
    \lesssim& M^2 \epsilon^2 t^{-1}\lVert ng_n \rVert_{\ell^2}^2
\end{split}\end{equation*}
which gives the required estimate after integrating in time.  For the term~\eqref{eqn:ng-deriv-weighted}, we must take advantage of cancellations.  Using the Fourier transform and defining $q =  \mathcal{F}(|u_n|^2)$, $r = \mathcal{F}(Lw)$, we find that
\begin{equation*}\begin{split}
    \eqref{eqn:ng-deriv-weighted} =& \frac{1}{2}\sum_{n \in \bbZ} |u_n|^2 \left( \discrAvg Lw_n \overline{Lw}_n - Lw_n \discrAvg \overline{Lw}_n\right) \\
    =& \frac{1}{4\pi} \int_{\bbT^2} q(\eta) \left(\cos(\xi - \eta) - \cos(\xi)\right) r(\xi - \eta) \overline{r}(\xi) \;d\xi d\eta\\
    =& \frac{1}{2\pi}\int_{\bbT^2} \sin \frac{\eta}{2} q(\eta) \left(\sin \frac{\xi}{2} \cos \frac{\xi - \eta}{2} + \cos\frac{\xi}{2} \sin \frac{\xi - \eta}{2}\right) r(\xi - \eta) \overline{r}(\xi) \;d\xi d\eta\\
    =& \sum_{n \in \bbZ} \sin(D/2) |u_n|^2 2\Re \left(\sin(D/2)Lw_n \cos(D/2) \overline{Lw}_n\right)\\
    \lesssim& M^2 \epsilon^2 t^{-1}\lVert ng_n \rVert_{\ell^2}^2
\end{split}\end{equation*}
where on the last line we have used the enhanced bilinear decay from~\Cref{cor:bilin-sin-est}.  Finally, to estimate~\eqref{eqn:ng-deriv-psi-L-SS}, we use~\cref{lem:LSS-expression} to write $L \mSS_n = \fL_n^{+} + \fL_n^{-} + \fC_n^+ + \fC_n^-$.  We will show how to bound the terms with `$+$' apexes: the other terms are similar.  Using~\eqref{eqn:fL-ests} together with~\eqref{eqn:w-ell2-al}, we find
\begin{equation*}\begin{split}
    \lVert \chi_{\lesssim \log t^{-1/5}}^\pm w_n \overline{u_n} \Psi \fL_n^+ \rVert_{\ell^2}\lesssim&
    \sum_{t^{-1/3} \lesssim 2^k \lesssim t^{-1/5}} \lVert \chi_{k}^\pm w_n \rVert_{\ell^2} \lVert \chi_{\sim k}^\pm u_n\rVert_{\ell^\infty} \lVert\Psi \fL^{+}_n\rVert_{\ell^\infty}\\
    \lesssim& M \epsilon^2 t^{-1}\lVert ng_n \rVert_{\ell^2}    
\end{split}\end{equation*}
and
\begin{equation*}\begin{split}
    \lVert \chi_{\gg \log t^{-\frac{1}{5}}}^+\chi_{\gg \log t^{-\frac{1}{5}}}^- w_n \overline{u_n} \Psi \fL_n^+ \rVert_{\ell^2}\lesssim& \lVert w_n \rVert_{\ell^\infty} \lVert \chi_{\gg \log t^{-\frac{1}{5}}}^+\chi_{\gg \log t^\frac{1}{5}}^- u_n \rVert_{\ell^\infty} \lVert \chi_{\gg \log t^{-\frac{1}{5}}}^+ \Psi \fL^+_n \rVert_{\ell^2}\\
    \lesssim& M \epsilon^2 t^{-\frac{17}{15}}\lVert ng_n \rVert_{\ell^2}
\end{split}\end{equation*}
which gives us the bound for these terms.  For the term involving $\fC_n^+$, we use~\eqref{eqn:fC-ests} to derive the bound
\begin{equation*}\begin{split}
    \lVert w_n \overline{u_n} \Psi \fC_n^+ \rVert_{\ell^2} \lesssim& \left( \sum_{k} \lVert \chi_{k}^+   w_n \overline{u_n} \Psi \fC_n^+ \rVert_{\ell^2}^2\right)^{1/2}\\
    \lesssim& M \epsilon^4 t^{-1}\lVert ng_n \rVert_{\ell^2}
\end{split}\end{equation*}
Combining the estimates for the $\fL^\pm_n$ and $\fC^\pm_n$ terms, we find that
\begin{equation*}
    |\eqref{eqn:ng-deriv-psi-L-SS}| \lesssim M \epsilon^2(1 + \epsilon^2) t^{-1}\lVert ng_n \rVert_{\ell^2}^2
\end{equation*}
which is better than what we require by a factor of $M$.

\subsection{\label{sec:weighted-l2-str}Estimates for the space-time resonant terms}
\subsubsection{Division in frequency space} 

To control~\eqref{eqn:ng-deriv-str-terms}, we must take advantage of the resonance structure of $\phi$ to gain back an extra power of $t$.  Recalling the definition of space-time resonance sets given in~\eqref{eqn:space-time-resonance-sets}, we define $\cR_1$ to be the set of all points within distance $1$ of the space-time resonance set $\cR$, and write
\begin{equation*}\begin{split}
    \cR_1 =& \{|\xi - \fhPi|^2 + |\eta - \fhPi|^2 +|\sigma + \fhPi|^2 \leq 1\} \cup \{|\xi + \fhPi|^2 + |\eta + \fhPi|^2 +|\sigma - \fhPi|^2 \leq 1\}\\
    =:& \cR_1^+ \cup \cR_1^-
\end{split}\end{equation*}
Let $\chi_\mathcal{S}(\xi,\eta,\sigma;t)$, $\chi_\mathcal{T}(\xi,\eta,\sigma;t)$, and $\chi_\mathcal{R}(\xi,\eta,\sigma;t)$ form a smooth partition of unity on $\bbT^3$ such that for $t \geq 1$:
\begin{enumerate}[(i)]
    \item $\left.\chi_\cS\right|_{\bbT^3 \setminus \cR_1}$ and $\left.\chi_\cT\right|_{\bbT^3 \setminus \cR_1}$ are constant and $\left.\chi_\cR\right|_{\bbT^3 \setminus \cR_1} = 0$
    \item $\left.\chi_\cS\right|_{\bbT^3 \setminus \cR_1}$ is supported outside a neighborhood of $\cS$ and $\left.\chi_\cT\right|_{\bbT^3 \setminus \cR_1}$ is supported outside a neighborhood of $\cT$
    \item Within $\cR_1^{\pm}$, we have that for $\bullet = \cS, \cT, \cR$ 
    \begin{equation*}
        \chi_\bullet(\xi,\eta,\sigma; t) = \tilde{\chi}_\bullet(t^{1/3}(\xi \mp \fhPi),t^{1/3}(\eta \mp \fhPi), t^{1/3}(\sigma \pm \fhPi))
    \end{equation*}
    where $\tilde{\chi}_\cS, \tilde{\chi}_\cT, \tilde{\chi}_\cR$ form a partition of unity on $\bbR$ such that $\tilde{\chi}_\cS$ and $\tilde{\chi}_\cT$ are zero homogeneous outside a ball of radius $1/2$, and vanish within a ball of radius $1/4$, and $\tilde\chi_{\cT}$ vanishes in a neighborhood of $\tilde{\cT} = \{\xi = \eta\}\cup\{\xi = \sigma\} \subset \bbR^3$, and $\tilde\chi_\cS$ vanishes in a neighborhood of $\tilde{\cS} := \{\eta = \sigma = \xi/3\} \subset \bbR^3$.
\end{enumerate}
Using this partition of unity, we write
\begin{subequations}\begin{align}
        \eqref{eqn:ng-deriv-str-terms} =& t\Re \bigl\langle T_{\partial_\xi \phi \chi_\cR e^{it\phi}}(f_n, \discrAvg g_{n}, \overline{f}_n) + T_{\partial_\xi \phi \chi_\cR e^{it\phi}}(f_n, \discrAvg h_{n}, \overline{g}_n) + T_{\partial_\xi \phi \chi_\cR e^{it\phi}}(g_n, \discrAvg h_{n}, \overline{f}_n), ng_n \bigr\rangle
    \label{eqn:space-time-res-term}\\
    \begin{split}
        &\cramped{+ t\Re \bigl\langle T_{\partial_\xi \phi \chi_\cS e^{it\phi}}(f_n, \discrAvg g_{n}, \overline{f}_n) + T_{\partial_\xi \phi \chi_\cS e^{it\phi}}(f_n, \discrAvg h_{n}, \overline{g}_n) + T_{\partial_\xi \phi \chi_\cS e^{it\phi}}(g_n, \discrAvg h_{n}, \overline{f}_n), ng_n \bigr\rangle}
   \end{split}\label{eqn:space-non-res-term}\\
   \begin{split}
        &\cramped{+t\Re \bigl\langle T_{\partial_\xi \phi \chi_\cT e^{it\phi}}(f_n, \discrAvg g_{n}, \overline{f}_n) + T_{\partial_\xi \phi \chi_\cT e^{it\phi}}(f_n, \discrAvg h_{n}, \overline{g}_n) +
        T_{\partial_\xi\phi \chi_\cT e^{it\phi}}(g_n, \discrAvg h_{n}, \overline{f}_n), ng_n \bigr\rangle}
        \end{split}\label{eqn:time-non-res-term}
\end{align}\end{subequations}

\subsubsection{Space-time resonant estimates}

For the term~\eqref{eqn:space-time-res-term}, we have that
\begin{equation*}\begin{split}
    |\eqref{eqn:space-time-res-term}| \lesssim& t \lVert n g_n \rVert_{\ell^2} \left( \lVert T_{\partial_\xi \phi \chi_\cR e^{it\phi}}(f_n, \discrAvg g_{n}, \overline{f}_n)\rVert_{\ell^2} + \lVert T_{\partial_\xi \phi \chi_\cR e^{it\phi}}(f_n, \discrAvg h_{n}, \overline{g}_n)\rVert_{\ell^2}\right. \\
    &\qquad\qquad\qquad\left.+ \lVert T_{\partial_\xi \phi \chi_\cR e^{it\phi}}(g_n, \discrAvg h_{n}, \overline{f}_n)\rVert_{\ell^2}\right)
\end{split}\end{equation*}
We will show how to estimate $\lVert T_{\partial_\xi \phi \chi_\cR e^{it \phi}}(f_n, \Psi g_n, \overline{f}_n)\rVert_{\ell^2}$.  The estimates for the other terms are analogous.  Defining $m_\cR^{\pm}(\xi,\eta,\sigma;t) = \partial_\xi \phi \chi_\cR e^{it \phi} \cos(\xi -\eta - \sigma)$, Plancherel's theorem and the support condition for $\chi_\cR$ yield
\begin{equation*}\begin{split}
    \lVert T_{\partial_\xi \phi \chi_\cR e^{it \phi}}(f_n, \Psi g_n, \overline{f}_n) \rVert_{\ell^2} =& \left\lVert\int_{\bbT^2} m_\cR \hat{f}(\eta) \hat{g}(\xi - \eta - \sigma) \overline{\hat{f}}(-\sigma) \;d\eta d\sigma \right\rVert_{L^2_\xi}\\
    \lesssim&  \left\lVert m_\cR(\xi,\eta,\sigma;t) \hat{f}(\eta) \hat{g}(\xi - \eta - \sigma) \overline{\hat{f}}(-\sigma) \right\rVert_{L^2_\xi L^1_{\eta,\sigma}} + \bsim\\
    \lesssim& \left\lVert |\xi - \eta - \sigma - \fhPi|^{1/2} m_\cR  \right\rVert_{L^2_\xi L^1_{\eta,\sigma}} \lVert \hat{f} \rVert_{L^\infty}^2 \lVert \hat{g} \rVert_{C^{1/2}}\\
    \lesssim& M^2 \epsilon^2 t^{-2}\lVert ng_n \rVert_{\ell^2}
\end{split}\end{equation*}
It follows that
\begin{equation*}
    |\eqref{eqn:space-time-res-term}| \lesssim M^2 \epsilon^2 t^{-1} \lVert ng_n \rVert_{\ell^2}^2
\end{equation*}
which is consistent with~\eqref{eqn:n-gn-desired-est}.

\subsubsection{Space non-resonant estimates}

We now show how to control the terms~\eqref{eqn:space-non-res-term}.  We can write
\begin{equation*}
    \eqref{eqn:space-non-res-term} = \Re \left\langle \mathcal{N}^\mathcal{S}(f,g,h), ng_n \right\rangle
\end{equation*}
where
\begin{equation*}
     \mathcal{N}^\mathcal{S}(f,g,h) = t T_{\partial_\xi \phi \chi_\cS e^{it\phi}}(f_n, \discrAvg g_{n}, \overline{f}_n) + T_{\partial_\xi \phi \chi_\cS e^{it\phi}}(f_n, \discrAvg h_{n}, \overline{g}_n) + T_{\partial_\xi \phi \chi_\cS e^{it\phi}}(g_n, \discrAvg h_{n}, \overline{f}_n)
\end{equation*}
By Cauchy-Schwarz, the desired bound will follow if we can show that
\begin{equation*}
    \lVert \mathcal{N}^\mathcal{S}(f,g,h) \rVert_{\ell^2} \lesssim M^2\epsilon^2(1 + \epsilon^2) t^{-1} \lVert ng_n \rVert_{\ell^2}
\end{equation*}
To obtain this estimate, we integrate by parts in frequency using the identity 
\begin{equation*}
    e^{it\phi} = \frac{\deles \phi}{it|\deles \phi|^2} \cdot \deles e^{it\phi}
\end{equation*}
Defining 
\begin{equation*}
    \mu^\mathcal{S} = \partial_\xi \phi \frac{\deles \phi}{i|\deles \phi|^2} \chi_\cS e^{it\phi}
\end{equation*}
to be the vector-valued symbol with components $\mu^{\mathcal{S},\eta}$ and $\mu^{\mathcal{S},\sigma}$, we find that
\begin{equation}\label{eqn:m-S-first-div}\begin{split}
    \mathcal{N}^\mathcal{S}(f,g,h) =& - T_{\deles \cdot \mu^\cS \cos(\xi - \eta - \sigma)}(u_n, w_n, \overline{u}_n)\\
    &- T_{\mu^{\cS,\sigma}}(u_n, \discrAvg w_n, \overline{L\mSS}_n)\\
    &- T_{\mu^{\cS,\sigma}}(u_n, \discrAvg L w_n, \overline{u}_n)\\
    &- T_{\mu^{\cS,\sigma}}(u_n, \discrAvg \mSS_n, \overline{Lw}_n)\\
    &+ \bsime
\end{split}\end{equation}
Where we have used the fact that $u_n = w_n + \mSS_n$ to express$\mathcal{N}^\cS(f,g,h)$ in terms of pseudoproducts which contain exactly one $w_n$ or $Lw_n$ factor and do not include any factors of $Lu_n$. Dividing dyadically in frequency, we can write
\begin{subequations}\begin{align}
    \mathcal{N}^\cS(f,g,h) =& \sum_{2^j \gtrsim t^{-1/3}} T_{m^\cS_j}(u_n, w_n \overline{u}_n)\label{eqn:m-S-j-no-Psi}\\
    &\qquad + T_{m^\cS_j}(u_n, \discrAvg L w_n,  \overline{u}_n)\label{eqn:m-S-j-Psi-L-w}\\
    &\qquad + T_{m^\cS_j}(u_n, \discrAvg u_n,  \overline{Lw}_n)\label{eqn:m-S-j-Psi-SS-L-w}\\
    &\qquad + T_{m^\cS_j}(u_n, \discrAvg L \mSS_n,  \overline{w}_n)\label{eqn:m-S-j-Psi-L-SS}\\
    &\qquad + T_{m^\cS_j}(u_n, \discrAvg w_n,  \overline{L\mSS}_n)\label{eqn:m-S-j-Psi-w-L-SS}\\
    &\qquad + T_{m^\cS_j}(L\mSS_n, \discrAvg u_n,  \overline{w}_n)\label{eqn:m-S-j-Psi-u-L-SS}\\
    &\qquad + T_{m^\cS_j}(u_n,  \mSS_n,  \overline{w}_n)\label{eqn:m-S-j-u-SS-w}\\
    &+\bsime \nonumber
\end{align}\end{subequations}
where the $m^\cS_j$ represents a generic symbol (the value of which can change from line to line), which satisfies the support condition
\begin{equation}\label{eqn:m-S-j-supp-cond}
    \supp(m^\cS_j) \subset \left\{|\xi - \eta| \ll 2^j \text{ or }|\xi - \sigma + \pi| \ll 2^j\right\} \cap \{\cos^2 \xi + \cos^2 \eta + \cos^2 \sigma \sim 2^{2j}\}
\end{equation}
and the symbol bound
\begin{equation}\label{eqn:m-S-j-symbol-bound}
    |\partial_{\xi,\eta,\sigma}^\alpha m^\cS_j| \lesssim 2^{-|\alpha|j}
\end{equation}
In particular, of the support of $m_j^\cS$, $|\xi - \eta - \sigma \pm \fhPi| \lesssim \max \{|\eta \pm \fhPi|, |\sigma \pm \fhPi|\}$, so we can write
\begin{equation}\label{eqn:m-S-j-supp-division}\begin{split}
    T_{m^\cS_j}(p_n,q_n,\overline{r_n}) =& T_{m^\cS_j}(Q_{\ll j}^{+}p_n, Q_{\lesssim j}^{+}q_n, Q_{\sim j}^{+}\overline{r_n}) + T_{m^\cS_j}(Q_{\sim j}^{+}p_n, Q_{\lesssim j}^{+}q_n, Q_{\ll j}^{+}\overline{r_n)} \\
    &+ Q_{\sim j}^{+} T_{m^\cS_j}(Q_{\sim j}^{+}p_n, Q_{\sim j}^{+}q_n, Q_{\sim j}^{+}\overline{r_n})+ \bsime
\end{split}\end{equation}

We begin by considering the terms~\eqref{eqn:m-S-j-no-Psi} and~\eqref{eqn:m-S-j-u-SS-w}.  By using the support condition~\eqref{eqn:m-S-j-supp-cond} and noting that $\mSS_n$ and $u_n$ satisfy the same sorts of decay estimates, we find that
\begin{subequations}\begin{align}
    \lVert \cramped{\eqref{eqn:m-S-j-no-Psi} + \eqref{eqn:m-S-j-u-SS-w}} \rVert_{\ell^2} \lesssim& \bigg\lVert \sum_{j} Q_{\sim j}^+ T_{m^\cS_j} (Q_{\sim j}^+ u_n, Q_{\sim j}^+ w_n, \overline{Q_{\sim j}^+ u_n}) \bigg\rVert_{\ell^2} \label{eqn:u-w-u-1}\\
    +& \bigg\lVert \sum_{j} \chi_{[j \pm 30]}^+ T_{m^\cS_j} (\chi_{[j \pm 30]}^+ Q_{\ll j}^+ u_n, \chi_{[j \pm 30]}^+ Q_{\lesssim j}^+ w_n, \chi_{[j \pm 20]}^+\overline{Q_{\sim j}^+ u_n}) \bigg\rVert_{\ell^2} \label{eqn:u-w-u-2}\\
    +& \bigg\lVert \sum_{j}  T_{m^\cS_j} ( Q_{\ll j}^+ u_n,  Q_{\lesssim j}^+ w_n, (1 - \chi_{[j \pm 20]}^+) \overline{Q_{\sim j}^+ u_n}) \bigg\rVert_{\ell^2} \label{eqn:u-w-u-3}\\
    +& \bigg\lVert \sum_{j} Q_{\sim j}^+ T_{m^\cS_j} (\chi_{> j - 40}^+ Q_{\ll j}^+ u_n, \chi_{> j - 30}^+ Q_{\ll j}^+ \mSS_n, \overline{Q_{\sim j}^+ w_n}) \bigg\rVert_{\ell^2} \label{eqn:u-w-u-4}\\
    +& \bigg\lVert \sum_{j}\sum_{k \leq j - 30} Q_{\sim j}^+ T_{m^\cS_j} (\chi_{\sim k}^+ Q_{\ll j}^+ u_n, \chi_{k}^+ Q_{\ll j}^+ \mSS_n, \chi_{\sim k}^+\overline{Q_{\sim j}^+ w_n}) \bigg\rVert_{\ell^2} \label{eqn:u-w-u-5}\\
    +& \bsime \nonumber
\end{align}
\end{subequations}
The terms~\cref{eqn:u-w-u-1,eqn:u-w-u-2,eqn:u-w-u-4} can all be controlled using the almost orthogonality of the terms in the sum to obtain
\begin{equation*}
    \eqref{eqn:u-w-u-1} + \eqref{eqn:u-w-u-2} + \eqref{eqn:u-w-u-4} \lesssim M^3 \epsilon^4 t^{-4/5} 
\end{equation*}
For~\eqref{eqn:u-w-u-3}, we use the improved decay of $Q_{\sim j}^+ u_n$ away from the support of $\chi_{[j\pm 20]}^+$ to obtain
\begin{equation*}\begin{split}
    \eqref{eqn:u-w-u-3} \lesssim&  M^2 \epsilon^2 t^{-1}\lVert ng_n \rVert_{\ell^2}
\end{split}
\end{equation*}
For the last term, 
\begin{equation*}\begin{split}
    \eqref{eqn:u-w-u-5} \lesssim& \sum_{2^j \gtrsim t^{-1/3}}\sum_{2^k \leq 2^{j - 30}} \lVert \chi_{\sim k}^+ Q_{\ll j}^+ u_n \rVert_{\ell^\infty} \lVert \chi_{\sim k}^+ Q_{\ll j}^+ \mSS_n \rVert_{\ell^\infty} \lVert \chi_{k} Q_{\sim j}^+ w_n \rVert_{\ell^2}\\
    \lesssim& M\epsilon^2  t^{-4/3}  \sum_{2^j \gtrsim t^{-1/3}} \sum_{t^{-1/3} \lesssim 2^k \lesssim 2^j}2^{-j} \lVert n g_n \rVert_{\ell^2}\\
    \lesssim& M \epsilon^2 t^{-4/3} \sum_{2^j \gtrsim t^{-1/3}} 2^{-j} \log(t^{1/3} 2^j) \lVert n g_n \rVert_{\ell^2}\\
    \lesssim& M \epsilon^2 t^{-1} \lVert ng_n \rVert_{\ell^2}
\end{split}
\end{equation*}

Turning now to the terms~\eqref{eqn:m-S-j-Psi-L-w} and~\eqref{eqn:m-S-j-Psi-SS-L-w} which containing $Lw_n$ factors, we can write
\begin{subequations}\begin{align}
    \lVert \eqref{eqn:m-S-j-Psi-L-w} + \eqref{eqn:m-S-j-Psi-SS-L-w}\rVert_{\ell^2} \lesssim& \bigg\lVert \sum_j Q_{\sim j}^+ T_{m^\cS_j}(Q_{\sim j}^+ u_n, Q_{\sim j}^+ \Psi L w_n, \overline{Q_{\sim j}^+ u_n}) \bigg \rVert_{\ell^2}\label{eqn:Lw-terms-1}\\
    &+ \bigg\lVert \sum_j \chi_{[j\pm 30]}^+ T_{m^\cS_j}(\chi_{[j\pm 30]}^+ Q_{\ll j}^+ u_n, Q_{\sim j}^+ \Psi L w_n, \chi_{[j\pm 20]}^+\overline{Q_{\lesssim j}^+ u_n}) \bigg \rVert_{\ell^2}\label{eqn:Lw-terms-2}\\
    &+ \bigg\lVert \sum_j T_{m^\cS_j}( Q_{\ll j}^+ u_n, Q_{\lesssim j}^+ \Psi L w_n, (1 - \chi_[j\pm 20]^+)\overline{Q_{\sim j}^+ u_n}) \bigg \rVert_{\ell^2}\label{eqn:Lw-terms-3}\\
    &+ \bigg\lVert \sum_j Q_{\sim j}^+ T_{m^\cS_j}(\chi_{> j - 40}^+ Q_{\ll j}^+ u_n, \chi_{> j - 30}^+ Q_{\ll j}^+ \Psi u_n, \overline{Q_{\sim j}^+ L w_n}) \bigg \rVert_{\ell^2}\label{eqn:Lw-terms-4}\\
    &+ \bigg\lVert \sum_j \sum_{k \leq j - 30}  \chi_{\sim k} Q_{\sim j}^+ T_{m^\cS_j}(\chi_{\sim k}^+ Q_{\ll j}^+ u_n, \chi_{k}^+ Q_{\ll j}^+ \Psi u_n,\chi_{\sim k}^+ Q_{\sim j}^+\overline{ L w_n}) \bigg \rVert_{\ell^2}\label{eqn:Lw-terms-5}\\
    &+\bsime \nonumber
\end{align}
\end{subequations}
The arguments for~\cref{eqn:Lw-terms-1,eqn:Lw-terms-2,eqn:Lw-terms-3,eqn:Lw-terms-4} are similar to those for~\cref{eqn:u-w-u-1,eqn:u-w-u-2,eqn:u-w-u-3,eqn:u-w-u-4}, and for~\eqref{eqn:Lw-terms-5}, the almost orthogonality of $\chi_{\sim k}^+ Q_{\sim j}^+\overline{L w_n}$ in $j$ and $k$ gives us the bound
\begin{equation*}\begin{split}
    \eqref{eqn:Lw-terms-5} \lesssim& M^3\epsilon^3 t^{-1}\lVert ng_n \rVert_{\ell^2}
\end{split}
\end{equation*}

It only remains to control the terms~\eqref{eqn:m-S-j-Psi-L-SS},~\eqref{eqn:m-S-j-Psi-u-L-SS}, and~\eqref{eqn:m-S-j-Psi-w-L-SS} containing an $LSS_n$ factor.  Using~\Cref{lem:LSS-expression}, we can rewrite these terms as
\begin{equation*}
    \eqref{eqn:m-S-j-Psi-L-SS} + \eqref{eqn:m-S-j-Psi-u-L-SS} + \eqref{eqn:m-S-j-Psi-w-L-SS} = \rmI + \rmII +\bsime
\end{equation*}
where $\rmI$ contains all of the terms with an $\fL_n^+$ factor and $\rmII$ contains the terms with a $\fC_n^+$ factor.  Let us first consider to bound for $\rmI$.  There, we can write
\begin{subequations}\begin{align}
    \lVert \rmI \rVert_{\ell^2} \leq& \biggl\lVert \sum_j Q_{\sim j}^+ T_{m_j^\cS}(Q_{\sim j}^+ u_n, Q_{\sim j}^+ \Psi w_n, \overline{Q_{\sim j}^+ \fL_n^+}) \biggr\rVert_{\ell^2} \label{eqn:fL-term-1}\\
    &+ \biggl\lVert \sum_j T_{m_j^\cS}(Q_{\ll j}^+ u_n, Q_{\lesssim j}^+ \Psi w_n, \overline{Q_{\sim j}^+ \fL_n^+}) \biggr\rVert_{\ell^2} \label{eqn:fL-term-2}\\
    &+ \biggl\lVert \sum_j T_{m_j^\cS}(Q_{\sim j}^+ u_n, Q_{\lesssim j}^+ \Psi w_n, \overline{Q_{\ll j}^+ \fL_n^+}) \biggr\rVert_{\ell^2} \label{eqn:fL-term-3}\\
    &+ \biggl\lVert \sum_j T_{m_j^\cS}(Q_{\ll j}^+ u_n, Q_{\lesssim j}^+ \Psi \fL^+_n, \overline{Q_{\sim j}^+ w}_n) \biggr\rVert_{\ell^2} \label{eqn:fL-term-4}\\
    &+ \biggl\lVert \sum_j T_{m_j^\cS}(Q_{\sim j}^+ u_n, Q_{\lesssim j}^+ \Psi \fL^+_n, \overline{Q_{\ll j}^+ w}_n) \biggr\rVert_{\ell^2} \label{eqn:fL-term-5}\\
    &+ \biggl\lVert \sum_j T_{m_j^\cS}(Q_{\sim j}^+ \fL^+_n, Q_{\lesssim j}^+ \Psi u_n, \overline{Q_{\ll j}^+ w}_n) \biggr\rVert_{\ell^2} \label{eqn:fL-term-6}\\
    &+ \biggl\lVert \sum_j T_{m_j^\cS}(Q_{\ll j}^+ \fL^+_n, Q_{\lesssim j}^+ \Psi u_n, \overline{Q_{\sim j}^+ w}_n) \biggr\rVert_{\ell^2} \label{eqn:fL-term-7}\\
    &+ \bsime\nonumber
\end{align}
\end{subequations}
Recalling that $Q_{\sim j}^+ \fL^+_n$ vanishes for $2^j \gg t^{-1/5}$ and decays rapidly at low frequencies by~\eqref{eqn:fL-ests}, we immediately obtain the estimate
\begin{equation*}\begin{split}
    \eqref{eqn:fL-term-1} \lesssim& M\epsilon^2 \sum_{2^j \lesssim t^{-1/5}} 2^{5j} \lVert ng_n \rVert_{\ell^2}\\
    \lesssim& M^2 \epsilon^2 t^{-1}\lVert ng_n \rVert_{\ell^2}
\end{split}\end{equation*}
For the others terms containing a $Q_{\sim j}^+ \fL^+_n$ factor, we must also decompose dyadically in space.  Due to the rapid decay of $Q_{\lesssim j} u_n$ and $Q_{\lesssim j} w_n$ in the region $|x| \gg t 2^{2j}$, we see that the leading order terms are given by
\begin{equation*}\begin{split}
    \eqref{eqn:fL-term-2} + \eqref{eqn:fL-term-6} \lesssim& \biggl\lVert \sum_j \sum_{k < j + 20} T_{m_j^\cS}(\chi_{k}^+ Q_{\ll j}^+ u_n, \chi_{\sim k}^+Q_{\lesssim j}^+ \Psi w_n, \overline{Q_{\sim j}^+ \fL_n^+}) \biggr\rVert_{\ell^2}\\
    &+ \biggl\lVert \sum_j \sum_{k < j + 20} T_{m_j^\cS}(Q_{\sim j}^+ \fL_n^+ , \chi_{\sim k}^+Q_{\lesssim j}^+ \Psi  u_n, \chi_k^+ \overline{Q_{\ll j}^+ w_n}) \biggr\rVert_{\ell^2} + \bbetter\\
    \lesssim& M^2\epsilon^2 t^{-1} \lVert ng_n \rVert_{\ell^2}
\end{split}
\end{equation*}
For~\eqref{eqn:fL-term-3}, we split into the cases $2^j \lesssim t^{-1/5}$, $2^{j} \gg t^{-1/5}$ and divide dyadically in space to write
\begin{equation*}\begin{split}
    \eqref{eqn:fL-term-3} \leq& \sum_{2^j \lesssim t^{-1/5}} \lVert T_{m_j^\cS}(Q_{\sim j}^+ u_n, Q_{\lesssim j}^+ \Psi w_n, \overline{Q_{\ll j}^+ \fL_n^+}) \rVert_{\ell^2}\\
    &+ \sum_{2^j \gg t^{-1/5}} \sum_{2^k \lesssim t^{-1/5}} \lVert T_{m_j^\cS}(Q_{\sim j}^+ u_n, \chi_{\sim k}^+Q_{\lesssim j}^+ \Psi w_n, \chi_k^+\overline{Q_{\ll j}^+ \fL_n^+}) \rVert_{\ell^2}\\
    &+ \sum_{2^j \gg t^{-1/5}} \sum_{t^{-1/5} \ll 2^k \leq 2^{j+30}} \lVert T_{m_j^\cS}(Q_{\sim j}^+ u_n, \chi_{\sim k}^+Q_{\lesssim j}^+ \Psi w_n, \chi_k^+\overline{Q_{\ll j}^+ \fL_n^+}) \rVert_{\ell^2}\\
    &+ \sum_{2^j \gg t^{-1/5}} \sum_{k > j + 30} \lVert T_{m_j^\cS}(\chi_{> j + 20}^+Q_{\sim j}^+ u_n, \chi_{> j + 20}^+ Q_{\lesssim j}^+ \Psi w_n, \chi_{> j + 30}^+\overline{Q_{\ll j}^+ \fL_n^+}) \rVert_{\ell^2}\\
    \lesssim& M^2 \epsilon^2 t^{-1} \lVert ng_n \rVert_{\ell^2}
\end{split}
\end{equation*}
The argument for~\eqref{eqn:fL-term-5} is analogous, so it only remains to control the terms~\eqref{eqn:fL-term-4} and~\eqref{eqn:fL-term-7}.  We will give the argument for~\eqref{eqn:fL-term-7}; the argument for~\eqref{eqn:fL-term-4} is easier, since $\Psi$ falls on the $\fL_n^+$ factor, which has better frequency localization properties than $u_n$.  We have that
\begin{equation*}\begin{split}
    \eqref{eqn:fL-term-7} \leq& \sum_{2^j \lesssim t^{-1/5}} \lVert T_{m_j^\cS}(Q_{\ll j}^+ \fL_n^+, Q_{\lesssim j}^+ \Psi u_n, \overline{Q_{\sim j}^+ w_n}) \rVert_{\ell^2}\\
    &+ \sum_{2^j \gg t^{-1/5}} \sum_{t^{-1/3} \lesssim 2^k \lesssim t^{-1/5}} \lVert T_{m_j^\cS}(\chi_k^+ \fL_n^+, \chi_{\sim k}^+Q_{\lesssim j}^+ \Psi u_n, \chi_{\sim k}^+\overline{Q_{\sim j}^+ w_n}) \rVert_{\ell^2}\\
    &+ \sum_{2^j \gg t^{-1/5}} \sum_{2^k \gg t^{-1/5}} \lVert T_{m_j^\cS}(\chi_k^+ \fL_n^+, \chi_{\sim k}^+Q_{\lesssim j}^+ \Psi u_n, \chi_{\sim k}^+\overline{Q_{\sim j}^+ w_n}) \rVert_{\ell^2}\\
    &+ \bbetter\\
    \lesssim& M\epsilon^2 t^{-1} \lVert ng_n \rVert_{\ell^2}
\end{split}
\end{equation*}

We now turn to the term $\rmII$.  Here, we can write
\begin{subequations}\begin{align}
    \lVert \rmII \rVert_{\ell^2} \leq& \biggl \lVert \sum_{j} T_{m_j^\cS}(u_n, \Psi \fC_n^+, \overline{w_n}) \biggr\rVert_{\ell^2} \label{eqn:fC-term-1}\\
    &+ \biggl\lVert \sum_j T_{m_j^\cS}(Q_{\ll j}^+ u_n, Q_{\lesssim j}^+ \Psi w_n, \overline{Q_{\sim j}^+ \fC_n^+}) \biggr\rVert_{\ell^2} \label{eqn:fC-term-2}\\
    &+ \biggl\lVert \sum_j T_{m_j^\cS}(Q_{\sim j}^+ u_n, Q_{\lesssim j}^+ \Psi w_n, \overline{Q_{\ll j}^+ \fC_n^+}) \biggr\rVert_{\ell^2} \label{eqn:fC-term-3}\\
    &+ \biggl\lVert \sum_j T_{m_j^\cS}(Q_{\ll j}^+ \fC_n^+, Q_{\lesssim j}^+ \Psi u_n, \overline{Q_{\sim j}^+ w_n^+}) \biggr\rVert_{\ell^2} \label{eqn:fC-term-4}\\
    &+ \biggl\lVert \sum_j T_{m_j^\cS}(Q_{\sim j}^+ \fC_n^+, Q_{\lesssim j}^+ \Psi u_n, \overline{Q_{\ll j}^+ w_n^+}) \biggr\rVert_{\ell^2} \label{eqn:fC-term-5}\\
    &+ \bsim \notag
\end{align}
\end{subequations}
Since $\Psi \fC_n^+$ satisfies decay bounds similar to those for $\mSS_n$ and $u_n$, it is straightforward to obtain bounds for~\eqref{eqn:fC-term-1} by modifying the argument used to control the terms~\eqref{eqn:m-S-j-no-Psi} and~\eqref{eqn:m-S-j-u-SS-w}.  A similar argument applies to the terms~\eqref{eqn:fC-term-2} and~\eqref{eqn:fC-term-5}, since we can use the frequency localization to transfer the $\Psi$ multiplier onto the $Q_{\sim j}^+ \fC_n^+$ factor.  Thus, it only remains to control the terms~\eqref{eqn:fC-term-3} and~\eqref{eqn:fC-term-4}.  For~\eqref{eqn:fC-term-3}, we divide dyadically in frequency and use almost orthogonality to find that
\begin{equation*}\begin{split}
    \eqref{eqn:fC-term-3} \lesssim& \sum_j \sum_{k < j - 20} \lVert T_{m_j^\cS}(\chi_{\sim k}^+ Q_{\sim j}^+ u_n, \chi_{\sim k}^+ Q_{\lesssim j}^+ \Psi w_n, \chi_k^+ \overline{Q_{\ll j}^+ \fC_n^+}) \rVert_{\ell^2}\\
    &+ \biggl\lVert \sum_j \sum_{k < j - 20}\chi_{[j \pm 30]}^+ T_{m_j^\cS}(\chi_{[j \pm 30]}^+ Q_{\sim j}^+ u_n, \chi_{[j \pm 30]}^+ Q_{\lesssim j}^+ \Psi w_n, \chi_{[j \pm 20]}^+ \overline{Q_{\ll j}^+ \fC_n^+}) \biggr\rVert_{\ell^2}\\
    &+ \bbetter\\
    \lesssim& M \epsilon^4 t^{-1} \lVert ng_n \rVert_{\ell^2}
\end{split}
\end{equation*}
Similarly, we have that
\begin{equation*}\begin{split}
    \lVert \eqref{eqn:fC-term-4} \rVert_{\ell^2} \lesssim& \sum_j \sum_{k < j - 30} \lVert T_{m_j^\cS}(\chi_k^+ Q_{\ll j} \fC_n^+, \chi_{\sim k}^+ Q_{\lesssim j}^+ \Psi u_n, \chi_{\sim k}^+ \overline{Q_{\sim j}^+ w_n}) \rVert_{\ell^2}\\
    &+ \biggl\lVert \chi_{[j \pm 40]}^+ T_{m_j^\cS}(\chi_{[j \pm 30]}^+ Q_{\ll j} \fC_n^+,  Q_{\lesssim j}^+ \Psi u_n, \overline{Q_{\sim j}^+ w_n}) \biggr\rVert_{\ell^2}\\
    &+ \bbetter\\
    \lesssim& M\epsilon^4 t^{-1} \lVert ng_n \rVert_{\ell^2}
\end{split}
\end{equation*}
which completes the argument for the space non-resonant terms.

\subsubsection{Time non-resonant estimates}\label{sec:ng_n-tnr}

We now consider the time non-resonant terms~\eqref{eqn:time-non-res-term}.  If we write
\begin{equation*}
    \mu^\cT_s = \cos(\xi - \eta - \sigma) \frac{\partial_\xi \phi \chi^\cT_s}{\phi}
\end{equation*}
then we can integrate by parts in time to obtain
\begin{subequations}\begin{align}
   \int_1^t \eqref{eqn:time-non-res-term} \;ds =& \left. s \left\langle T_{\mu^\cT_s}(u_n, w_n, u_n), e^{is\discrAvg} n g_n(s) \right\rangle \right|_{s = 1}^{s=t} \label{eqn:tnr-bdy}\\
   &+ \int_1^t \left\langle T_{\mu^\cT_s}(u_n, w_n, u_n), e^{is\discrAvg} n g_n \right\rangle\;ds \label{eqn:tnr-no-s}\\
   &+ \int_1^t s \left\langle T_{\partial_s \mu^\cT_s}(u_n, w_n, u_n), e^{is\discrAvg} n g_n(s) \right\rangle\;ds \label{eqn:tnr-deriv-hits-symbol}\\
   &+ \int_1^t s \left\langle T_{\mu^\cT_s}(u_n, e^{is\discrAvg} \partial_s g_n, u_n), e^{is\discrAvg} n g_n(s) \right\rangle\;ds \label{eqn:tnr-deriv-hits-function}\\
   \begin{split}&+ \int_1^t s \left\langle T_{\mu^\cT_s}(u_n, w_n, \overline{u_n}) + T_{\mu^\cT_s}(u_n, \mSS_n, \overline{w}_n)\right.\\
   &\qquad\qquad\qquad \left.+ T_{\mu^\cT_s}(w_n, \mSS_n, \overline{u}_n), e^{is\discrAvg} n \partial_s g_n\right\rangle\;ds \end{split}\label{eqn:tnr-deriv-hits-IP}\\
   &+ \bsime\nonumber
\end{align}\end{subequations}

To help with bounding the terms, we will write $m_j$ for a time dependent symbol supported on $|\cos \xi| + |\cos \eta| + |\cos \sigma| \sim 2^j$ which satisfies the symbol bounds
\begin{equation*}
    |\partial_{\xi,\eta,\sigma}^\alpha m_j| \lesssim 2^{-|\alpha| j}
\end{equation*}
uniformly in time.  Using these symbols, we can write 
\begin{equation}\label{eqn:m-ct-decomp}
    \mu_s^\cT = \sum_{2^j \geq s^{-1/3}} m_j
\end{equation}
We now turn to the estimates.  We will estimate~\cref{eqn:tnr-bdy,eqn:tnr-no-s,eqn:tnr-deriv-hits-symbol,eqn:tnr-deriv-hits-function} in this section, and defer the estimate for~\eqref{eqn:tnr-deriv-hits-IP} until~\Cref{sec:final-ngn}.  For~\eqref{eqn:tnr-bdy}, we see by Cauchy-Schwarz that
\begin{equation*}
    \left|\eqref{eqn:tnr-bdy}\right| \lesssim t \left\lVert T_{\mu_s^\cT}(u_n, w_n, \overline{u_n})\right\rVert_{\ell^2} \left\lVert n g_n \right\rVert_{\ell^2} + \{\text{similar}\}
\end{equation*}
where $\{\text{similar}\}$ denotes the contribution from $s = 1$.  Thus, it suffices to bound the pseudoproduct $T_{\mu_s^\cT}(u_n, w_n, \overline{u_n})$ in $L^2$.  By~\eqref{eqn:m-ct-decomp}, we have
\begin{subequations}\begin{align}
    T_{\mu_s^\cT}(u_n, w_n, \overline{u_n}) =&  \sum_{2^j \geq s^{-1/3}} Q^+_{\sim j} T_{m_j}(Q^+_{\ll j} u_n, Q^+_{\sim j} w_n, \overline{Q^+_{\ll j} u_n})\label{eqn:tnr-pseudoprod-decomp-a}\\
    &+ \sum_{2^j \geq s^{-1/3}} T_{m_j}(Q^+_{\sim j} u_n, Q^+_{\lesssim j} w_n, \overline{Q^+_{\ll j} u_n})\label{eqn:tnr-pseudoprod-decomp-b}\\
    &+ \sum_{2^j \geq s^{-1/3}} T_{m_j}(Q^+_{\sim j} u_n, Q^+_{\lesssim j} w_n, \overline{Q^+_{\sim j} u_n})\label{eqn:tnr-pseudoprod-decomp-c}\\
    &+ \bsim\notag
\end{align}\end{subequations}
The terms~\eqref{eqn:tnr-pseudoprod-decomp-a} and~\eqref{eqn:tnr-pseudoprod-decomp-b} have the same form as those we obtained when considering~\eqref{eqn:m-S-j-no-Psi}.  The term~\eqref{eqn:tnr-pseudoprod-decomp-c} is new.  To handle it, we write
\begin{align*}
    \eqref{eqn:tnr-pseudoprod-decomp-c} =& \sum_{2^j \geq s^{-1/3}} \chi_{[j\pm 40]}T_{m_j}(Q^+_{\sim j} u_n, \chi_{[j\pm 30]} Q^+_{\lesssim j} w_n, \overline{Q^+_{\sim j} u_n})\\
    &+ \sum_{2^j \geq s^{-1/3}} T_{m_j}((1-\chi_{[j\pm 20]})Q^+_{\sim j} u_n, (1-\chi_{[j\pm 30]}) Q^+_{\lesssim j} w_n, (1-\chi_{[j\pm 20]})\overline{Q^+_{\sim j} u_n})\\
    &+ \bbetter\notag
\end{align*}
The second term is easily handled using the decay estimates for $(1-\chi_{[j\pm 20]})Q^+_{\sim j} u_n$, and the first term can be controlled using almost orthogonality.  Combining the estimates for~\cref{eqn:tnr-pseudoprod-decomp-a,eqn:tnr-pseudoprod-decomp-b,eqn:tnr-pseudoprod-decomp-c}, we find that
\begin{equation}\label{eqn:T-mu-s-bd}
    \lVert T_{\mu_s^\cT}(u_n, w_n, \overline{u_n}) \rVert_{L^2} \lesssim M^2\epsilon^2 s^{-1}\lVert ng_n \rVert_{\ell^2}
\end{equation}
and hence
\begin{equation*}
    |\eqref{eqn:tnr-bdy}| \lesssim M^2\epsilon^2 \lVert ng_n(t) \rVert_{\ell^2}^2
\end{equation*}
and
\begin{equation*}
    |\eqref{eqn:tnr-no-s}| \lesssim \int_1^t M^2\epsilon^2 s^{-1}\lVert ng_n(s) \rVert_{\ell^2}^2 \;ds
\end{equation*}
Furthermore, observing that we have the decomposition
\begin{equation*}
    s\partial_s \mu^\cT_s = \sum_{2^j \gtrsim s^{-1/3}} m_j
\end{equation*}
we see that~\eqref{eqn:tnr-deriv-hits-symbol} satisfies the same bounds as~\eqref{eqn:tnr-no-s}.  We now turn to~\eqref{eqn:tnr-deriv-hits-function}.  Using~\eqref{eqn:n-gn-deriv}, we can write
\begin{equation*}\begin{split}
    \left|\eqref{eqn:tnr-deriv-hits-function}\right| \lesssim& \int_1^t s\lVert T_{\mu^\cT_s}(u_n, |u_n|^2 \discrAvg w_n, \overline{u_n})\rVert_{\ell^2} \lVert n g_n \rVert_{\ell^2}\;ds\\
    &+ \int_1^t s\lVert T_{\mu^\cT_s}(u_n, w_n \overline{u}_n \discrAvg \mSS_n, \overline{u_n})\rVert_{\ell^2} \lVert n g_n \rVert_{\ell^2}\;ds\\
    &+ \int_1^t s\lVert T_{\mu^\cT_s}(u_n, R_n, \overline{u_n})\rVert_{\ell^2} \lVert n g_n \rVert_{\ell^2}\;ds\\
    &+\bsim
\end{split}\end{equation*}
so it suffices to show that
\begin{equation}\label{eqn:tnr-dhf-12-bound}
    \lVert T_{\mu^\cT_s}(u_n, |u_n|^2 \discrAvg w_n, \overline{u_n})\rVert_{\ell^2} + \lVert T_{\mu^\cT_s}(u_n, w_n \overline{u}_n \discrAvg \mSS_n, \overline{u_n})\rVert_{\ell^2} \lesssim M^4 \epsilon^4 s^{-2} \lVert ng_n \rVert_{\ell^2}
\end{equation}
and
\begin{equation}\label{eqn:tnr-dhf-3-bound}
    \lVert T_{\mu^\cT_s}(u_n, R_n, \overline{u_n})\rVert_{\ell^2} \lesssim M^2 \epsilon^2 (\epsilon + M^3\epsilon^3) s^{-\frac{19}{10}}
\end{equation}
since this implies that
\begin{equation*}
    \left|\eqref{eqn:tnr-deriv-hits-function}\right| \lesssim \int_1^t M^2\epsilon^2 s^{-1} \lVert ng_n(s)\rVert_{\ell^2}^2 \;ds + M^2 \epsilon^4 t^{1/5}
\end{equation*}
as required.  We begin with the bound for the term containing $|u_n|^2 \Psi w_n$.  From~\eqref{eqn:m-ct-decomp}, we have that
\begin{subequations}\begin{align}
    T_{\mu^\cT_s}(u_n, |u_n|^2 \discrAvg w_n, \overline{u_n}) 
    =& \sum_{2^j \geq s^{-1/3}} T_{m_j}(Q^+_{\sim j} u_n, |u_n|^2 \discrAvg w_n, \overline{Q^+_{\sim j} u_n})\label{eqn:tnr-dhf-1-1}\\
    &+ \sum_{2^j \geq s^{-1/3}} Q_{\sim j}^+ T_{m_j}\left(Q^+_{\ll j} u_n, Q_{\sim j}^+\left(|u_n|^2 \discrAvg w_n\right), \overline{Q^+_{\ll j} u_n}\right)\label{eqn:tnr-dhf-1-2}\\
    &+ \bsime\nonumber
\end{align}\end{subequations}
For the first term, using the dispersive estimates for $u_n$ and $w_n$ gives the bound
\begin{equation}\label{eqn:uu-Psi-w-bound}
    \lVert \chi_{\sim k}^+ |u_n|^2 \Psi w_n \rVert_{\ell^2} \lesssim M^2\epsilon^2 s^{-1} 2^k \lVert ng_n \rVert_{\ell^2} c_k
\end{equation}
which lets us write
\begin{equation*}\begin{split}
    \left\lVert \eqref{eqn:tnr-dhf-1-1} \right\rVert_{\ell^2}  \lesssim&\Bigl\lVert \sum_{2^j \geq s^{-1/3}} \chi_{[j-40,j+40]}^+ T_{m_j}( Q_{\sim j}^+ u_n, \chi_{[j-30,j+30]}^+ |u_n|^2 \Psi w_n, \chi_{[j-40,j+40]}^+\overline{Q_{\lesssim j}^+ u_n}) \Bigr\rVert_{\ell^2}\\
    &+\sum_{2^j \geq s^{-1/3}} \lVert T_{m_j}(\chi_{< j - 20}^+ Q_{\sim j}^+ u_n, \chi_{< j - 30}^+ |u_n|^2 \Psi w_n, \overline{Q_{\lesssim j}^+ u_n}) \rVert_{\ell^2}+ \bbetter\\
    \lesssim& M^4\epsilon^4 s^{-2}\lVert ng_n \rVert_{\ell^2}
\end{split}\end{equation*}
For the second term, note that $\chi_{\sim k}^+ Q^+_{\sim j} (|u_n|^2 \discrAvg w_n)$ obeys better bounds than~\eqref{eqn:uu-Psi-w-bound} when ${2^k \ll 2^j}$, since the frequency localization prevents the leading order interaction $|Q^+_{\sim k} u_n|^2 \discrAvg Q^+_{\sim k} w_n$:
\begin{equation}\label{eqn:uu-Psi-w-bound-refined}\begin{split}
    \bigl\lVert \chi_{k}^+ Q^+_{\sim j} \left( |u_n|^2 \discrAvg w_n \right) \bigr\rVert_{\ell^2} \lesssim& \bigl\lVert \chi_{k}^+ Q^+_{\sim j} ( \chi_{\sim k}^+ Q^+_{\gtrsim j} u_n \overline{u_n} \discrAvg w_n ) \bigr\rVert_{\ell^2}\\
    +& \bigl\lVert \chi_{k}^+ Q^+_{\sim j} ( \chi_{\sim k}^+ Q^+_{\ll j} u_n Q^+_{\gtrsim j} \overline{u_n} \discrAvg w_n ) \bigr\rVert_{\ell^2}\\
    &+ \bigl\lVert \chi_{k}^+ Q^+_{\sim j} ( \chi_{\sim k}^+ |Q^+_{\ll j} u_n|^2 Q^+_{\sim j}\discrAvg w_n ) \bigr\rVert_{\ell^2}\\
    &+ \bsime\\
    \lesssim& M^2\epsilon^2 s^{-4/3} 2^{-3/2j + 3/2 k} \lVert  n g_n \rVert_{\ell^2} c_k + M^2 \epsilon^2 s^{-4/3} \lVert n g_n \rVert_{\ell^2} c_j
\end{split}\end{equation}
On the other hand, for $2^k \gtrsim 2^j$, $\chi_{\sim k}^+ Q^+_{\sim j} (|u_n|^2 \discrAvg w_n)$ obeys the same bounds given by~\eqref{eqn:uu-Psi-w-bound}.  Applying this to~\eqref{eqn:tnr-dhf-1-2} and using almost orthogonality yields
\begin{equation*}\begin{split}
    \lVert \eqref{eqn:tnr-dhf-1-2} \rVert_{\ell^2} \lesssim& \biggl\lVert \sum_{\substack{2^j \geq s^{-1/3}\\ s^{-1/3} \leq 2^k < 2^{j-30}}}  \chi_{\sim k}^+ Q_{\sim j}^+ T_{m_j}(\chi_{\sim k}^+ Q_{\ll j}^+ u_n, \chi_k^+ Q_{\sim j}^+ (|u_n|^2 \Psi w_n), \chi_{\sim k}^+ \overline{Q_{\ll j}^+ u_n})\biggr\rVert_{\ell^2}\\
    +& \biggl\lVert \sum_{2^j \geq s^{-\frac{1}{3}}}  T_{m_j}(\chi_{[j-40, j+40]}^+ Q_{\ll j}^+ u_n, \chi_{[j-30, j+30]}^+ Q_{\sim j}^+ (|u_n|^2 \Psi w_n), \chi_{[j-40, j+40]}^+ \overline{Q_{\ll j}^+ u_n})\biggr\rVert_{\ell^2}\\
    +& \bbetter\\
    \lesssim& M^4\epsilon^4 s^{-7/3}\lVert n g_n \rVert_{\ell^2}\Biggl( \sum_{\substack{2^j \geq s^{-1/3}\\ s^{-1/3} \leq 2^k < 2^{j-30}}} 2^{k-3j} + 2^{-2k} c_j^2\Biggr)^{1/2}\\
    &+ M^4\epsilon^4 s^{-2} \lVert ng_n \rVert_{\ell^2} \biggl( \sum_{2^j \geq s^{-1/3}} c_j^2\biggr)^{1/2}\\
    \lesssim& M^4\epsilon^4 s^{-2}\lVert ng_n \rVert_{\ell^2}
\end{split}\end{equation*}

The argument for the term containing the $w_n\overline{u_n} \discrAvg \mSS_n$ factor is similar once we establish analogs for~\eqref{eqn:uu-Psi-w-bound} and~\eqref{eqn:uu-Psi-w-bound-refined}.  If we localize only in space, we find that
\begin{equation}\label{eqn:wu-Psi-SS-bound}
    \lVert \chi_{k}^\pm  w_n \overline{u_n} \discrAvg \mSS_n \rVert_{\ell^2} \lesssim M\epsilon^2 s^{-1} 2^k \lVert n g_n \rVert_{\ell^2} c_k
\end{equation}
which is slightly better than~\eqref{eqn:uu-Psi-w-bound}, since it is linear in the bootstrap parameter $M$.  If localize first in frequencies and then in space, then for $2^k \ll 2^j$ the argument from~\eqref{eqn:uu-Psi-w-bound-refined} yields that
\begin{equation}\label{eqn:wu-Psi-SS-bound-refined}\begin{split}
    \lVert \chi_{k}^\pm  Q_{\sim j}^+ \left(w_n \overline{u_n} \discrAvg \mSS_n\right) \rVert_{\ell^2} \lesssim& \left\lVert \chi_{k}^\pm  Q_{\sim j}^+\left(  w_n \overline{u_n} Q_{\gtrsim j}^+ \discrAvg \mSS_n\right)\right\rVert_{\ell^2} \\
    &+ \left\lVert \chi_{k}^\pm  Q_{\sim j}^+\left(\chi_{\sim k}^\pm   w_n Q_{\gtrsim j}^+ \overline{u_n} \discrAvg Q_{\ll j}^+ \mSS_n\right) \right\rVert_{\ell^2}\\
    &+ \left\lVert \chi_{k}^\pm Q_{\sim j}^+\left(\chi_{\sim k}^\pm Q^+_{\sim j} w_n Q_{\ll j}^+ \overline{u_n} \discrAvg Q_{\ll j}^+ \mSS_n\right)\right\rVert_{\ell^2}\\
    &+ \bsime\\
    \lesssim& M\epsilon^2 s^{-4/3} 2^{k/2-j/2} \lVert n g_n \rVert_{\ell^2} c_k + M\epsilon^2 s^{-4/3} 2^{k - j} \lVert n g_n \rVert_{\ell^2} c_j
\end{split}\end{equation}
which can be used similarly to~\eqref{eqn:uu-Psi-w-bound-refined}.  This completes the estimate~\eqref{eqn:tnr-dhf-12-bound}.  It only remains to prove~\eqref{eqn:tnr-dhf-3-bound}.  Recall that $R_n$ is given by~\eqref{eqn:R-n-def}.  As in~\Cref{sec:ss-eqn-error}, we will focus on the terms containing $R_n^+$.  Using~\eqref{eqn:Rn-decomp}, we find that
\begin{subequations}\begin{align}
    \sum_{2^j \geq s^{-1/3}} T_{m_j}(u_n, R_n, \overline{u_n}) =& \sum_{2^j \geq s^{-1/3}} T_{m_j}(u_n, R_n^{\text{lin},+}, \overline{u_n})\label{eqn:tnr-dhf-3-a}\\
    &+\sum_{2^j \geq s^{-1/3}} T_{m_j}(u_n, R_n^{\text{mod},+}, \overline{u_n})\label{eqn:tnr-dhf-3-b}\\
    &+\sum_{2^j \geq s^{-1/3}} T_{m_j}(u_n, R_n^{\text{cubic},+}, \overline{u_n})\label{eqn:tnr-dhf-3-c}\\
    &+ \bsime\notag
\end{align}\end{subequations}
By observing that $Q_{\sim j} R_n^{\text{lin},+}$ vanishes for $2^j \gg s^{-1/5}$ and using~\eqref{eqn:R_n-lin-ests}, we find that
\begin{equation*}\begin{split}
    \lVert \eqref{eqn:tnr-dhf-3-a} \rVert_{\ell^2} \lesssim& \sum_{\substack{s^{-1/3} \leq 2^j \lesssim s^{-1/5}\\ 2^k \geq s^{-1/3}}} \lVert \chi_{k}^+ T_{m_j}(\chi_{\sim k}^+u_n, \chi_{\sim k}^+ Q_{\lesssim j}^+ R_n^{\text{lin},+}, \chi_{\sim k}^+\overline{u_n})\rVert_{\ell^2}\\
    &+ \sum_{{2^j \gg s^{-1/5}}} \lVert T_{m_j}(Q^+_{\sim j} u_n, \chi_{[j - 20, j+20]}^+R_n^{\text{lin},+}, \chi_{[j - 30, j+30]}^+\overline{u_n})\rVert_{\ell^2}\\
    &+ \btext{similar or easier terms}\\
    \lesssim& M^2\epsilon^3 s^{-\frac{19}{10}}
\end{split}\end{equation*}
which is sufficient.  Turning to~\eqref{eqn:tnr-dhf-3-b}, we perform a further decomposition in frequency to obtain
\begin{subequations}\begin{align}
    \sum_{j} T_{m_j}(u_n, R_n^{\text{mod},+}, \overline{u_n}) =& \sum_{2^j \lesssim s^{-1/5}} T_{m_j}(u_n, Q_{[j-30, j+30]}^+ R_n^{\text{mod},+}, \overline{u_n})\label{eqn:tnr-r-mod-1}\\
    &+ \sum_{2^j \lesssim s^{-1/5}} T_{m_j}(Q_{\sim j}^+ u_n,  Q_{< j - 30}^+ R_n^{\text{mod},+}, \overline{u_n}) \label{eqn:tnr-r-mod-2}\\
    &+ \bsime\notag
\end{align}
\end{subequations}
For~\eqref{eqn:tnr-r-mod-1}, we see (based on the decay of $u_n$) that the worst terms will come from the region where $|n| \approx 2t$, so a quick calculation yields
\begin{equation*}\begin{split}
    \lVert \eqref{eqn:tnr-r-mod-1} \rVert_{\ell^2} \lesssim& \sum_{s^{-1/3} \lesssim 2^j \lesssim s^{-1/5}} \lVert T_{m_j}(\chi_{< j - 60}^+ u_n, \chi_{< j - 50}^+Q_{[j-30, j+30]}^+ R_n^{\text{mod},+}, \chi_{< j - 60}^+ \overline{u_n}) \rVert_{\ell^2}\\
    &+\bbetter\\
    \lesssim& M^5\epsilon^5 s^{-\frac{31}{15}} \sum_{2^j \gtrsim s^{-1/3}} 2^{-j/2} \ln(2 + s^{1/3} 2^{j})\\
    &\lesssim M^5 \epsilon^5 s^{-\frac{19}{10}}
\end{split}
\end{equation*}
which is in line with~\eqref{eqn:tnr-dhf-3-bound}.  Similarly, for~\eqref{eqn:tnr-r-mod-2}, we find that
\begin{equation*}\begin{split}
    \lVert \eqref{eqn:tnr-r-mod-2} \rVert_{\ell^2} \lesssim& \sum_{s^{-1/3} \lesssim 2^j \lesssim s^{-1/5}} \lVert T_{m_j}(Q_{\sim j}^+ u_n, \chi_{> j - 30}^+Q_{< j - 50}^+ R_n^{\text{mod},+}, \chi_{> j}^+\overline{u_n}) \rVert_{\ell^2}\\
    &+ \sum_{s^{-1/3} \lesssim 2^j \lesssim s^{-1/5}} \lVert T_{m_j}(\chi_{< j - 20} Q_{\sim j}^+ u_n, \chi_{< j - 30}^+Q_{< j - 30}^+ R_n^{\text{mod},+}, \chi_{> j}^+\overline{u_n}) \rVert_{\ell^2}\\
    &+\bbetter\\
    \lesssim& M^5\epsilon^5 s^{-\frac{12}{5}} \sum_{2^j \gtrsim s^{-1/3}} 2^{-3/2j} \ln(2 + s^{1/3} 2^{j}) + M^5\epsilon^5 s^{-\frac{67}{30}} \sum_{2^j \gtrsim s^{-1/3}} 2^{-j/2} \ln(2 + s^{1/3} 2^{j})\\
    \lesssim& M^5 \epsilon^5 s^{-\frac{19}{10}}
\end{split}
\end{equation*}

Finally, bound~\eqref{eqn:tnr-dhf-3-c}.  Based on the frequency localization of the symbols $m_j$, we can write
\begin{subequations}\begin{align}
    \sum_{j} T_{m_j}(u_n, R_n^{\text{cubic},+}, \overline{u_n}) =& \sum_{j} Q_{\sim j}^+T_{m_j}(Q_{\ll j}^+u_n, Q_{\sim j}^+R_n^{\text{cubic},+}, Q_{\ll j}^+\overline{u_n})\label{eqn:tnr-rncp-1-a}\\
    &+ \sum_{j} T_{m_j}(Q_{\sim j}^+u_n, R_n^{\text{cubic},+}, \overline{u_n})\label{eqn:tnr-rncp-1-b}\\
    &+ \bsim\notag
\end{align}\end{subequations}
We will show how to bound~\eqref{eqn:tnr-rncp-1-a}: the argument for~\eqref{eqn:tnr-rncp-1-b} is simpler, since the frequency localization of $u_n$ gives additional decay.  By introducing a decomposition in the spatial variables, we can write
\begin{equation*}\begin{split}
    \eqref{eqn:tnr-rncp-1-a} =& \sum_{j,k} \chi_{\sim k}^+ Q_{\sim j}^+T_{m_j}(\chi_{\sim k}^+ Q_{\ll j}^+u_n, \chi_{k}^+ Q_{\sim j}^+R_n^{\text{cubic},+}, \chi_{\sim k}^+ Q_{\ll j}^+\overline{u_n}) + \bbetter
\end{split}\end{equation*}
Thus, by using almost orthogonality in $j$ and $k$ together with the decay estimates~\eqref{eqn:R-cubic-decay-bds}, we see that
\begin{equation*}\begin{split}
    \lVert \eqref{eqn:tnr-rncp-1-a} \rVert_{\ell^2} \lesssim& \left( \sum_{j,k} \lVert T_{m_j}(\chi_{\sim k}^+Q_{\ll j}^+u_n, \chi_{k}^+ Q_{\sim j}^+R_n^{\text{cubic},+}, \chi_{\sim k}^+ Q_{\ll j}^+\overline{u_n}) \rVert_{\ell^2}^2 \right)^{\frac{1}{2}} + \bbetter\\
    \lesssim& M^2\epsilon^2 s^{-1} \left( \sum_{2^k \lesssim s^{-1/3}} 2^{-2k}\lVert \chi_{k}^+ R_n^{\text{cubic},+} \rVert_{\ell^2}^2 \right)^{\frac{1}{2}} + \bbetter\\
    \lesssim& M^2 \epsilon^5 s^{-\frac{19}{10}}
\end{split}
\end{equation*}
as required.

\subsection{\label{sec:weighted-l2-eqn-err}Estimates for the error \texorpdfstring{$LR_n$}{LR\_n}}
We now consider the estimates for the term~\eqref{eqn:ng-deriv-eqn-error}.  We will show that
\begin{equation}
    \left|\int_1^t \langle L R_n, Lw_n \rangle_{\ell^2}\;ds - \eqref{eqn:m-R-2-d-tnr-f}\right| \lesssim \int_1^t \epsilon s^{-\frac{9}{10}}\lVert ng_n \rVert_{\ell^2}\;ds\label{eqn:n-Rn-desired-bound}
\end{equation}
where~\eqref{eqn:m-R-2-d-tnr-f} is a term we defer until~\Cref{sec:final-ngn}.  To prove~\eqref{eqn:n-Rn-desired-bound}, we will again employ the decomposition given in~\cref{eqn:R-n-cross-def,eqn:R-n-lin-def,eqn:R-n-mod-def,eqn:R-n-cubic-def}.  We will focus on estimating the terms $n\invLinProp[t]R^{\text{lin},+}$, $n\invLinProp[t]R^{\text{mod},+}$, and $n\invLinProp[t]R^{\text{cubic},+}$, as the other terms are simpler or easier.

\subsubsection{The estimates for the linear and modulation terms}  We begin by considering $n\invLinProp[t]R^{\text{lin},+}$.  By using Plancherel's theorem and translating in Fourier space, we find that
\begin{subequations}\begin{align}
    \lVert L R_n^{\text{lin},+} \rVert_{\ell^2} =& \lVert n \invLinProp  R_n^{\text{lin},+} \rVert_{\ell^2}\notag\\
    =& \left\lVert \partial_\xi\left( e^{-2it\cos(\xi + \fhPi)} \widehat{R^{\text{lin},+}}(\xi + \fhPi)\right) \right\rVert_{L^2}\nonumber\\
    \lesssim& \left\lVert \partial_{\xi} \left( \SSCutoff E(\xi,t) \left(\sin \xi - \xi + \xi^3 /6\right) \hat{\theta}\right) \right\rVert_{L^2}\label{eqn:R-n-lin-I}\\
    &+ t^{-4/5}\left\lVert \partial_{\xi} \left( \xi \rho'(t^{1/5} \xi) E(\xi,t) \hat{\theta}\right) \right\rVert_{L^2}\label{eqn:R-n-lin-II}
\end{align}\end{subequations}
where 
\begin{equation*}
    E(\xi,t) = \exp\left(2i \left(\sin \xi - \xi + \xi^3/6 \right) t\right)
\end{equation*}  Noting that $|\partial_\xi E| \lesssim t \xi^4$, we find that 
\begin{equation*}\begin{split}
    \eqref{eqn:R-n-lin-I} \lesssim& t^{1/5} \lVert \xi^5 \rho'(t^{1/5} \xi) \hat{\theta} \rVert_{L^2} + t \lVert \xi^9 \SSCutoff \hat{\theta}\rVert_{L^2}\\
    &+ \lVert \xi^4 \SSCutoff \hat{\theta} \rvert_{L^2} + \lVert \xi^5 \SSCutoff \partial_\xi \hat{\theta} \rVert_{L^2}\\
    \lesssim& t^{-9/10} \lVert \hat{\theta} \rVert_{L^\infty} + \lVert \SSCutoff[D] |\fs|^2\fs \rVert_{L^2}\\
    \lesssim& \epsilon(1 + \epsilon^2)t^{-9/10}
\end{split}\end{equation*}
By a similar argument
\begin{equation*}\begin{split}
    \eqref{eqn:R-n-lin-II} \lesssim& t^{-4/5} \lVert \left(\xi t^{1/5} \rho''(t^{1/5} \xi) + \rho'(t^{1/5} \xi)\right) \hat{\theta} \rVert_{L^2} + t^{{1/5}} \lVert \xi^5 \rho'(t^{1/5} \xi) \hat{\theta} \rVert_{L^2}\\
    &+ t^{-4/5} \lVert \xi \rho'(t^{1/5} \xi) \partial_x \hat{\theta} \rVert_{L^2}\\
    \lesssim& \epsilon\left(1 + \epsilon^2 \right) t^{-9/10}
\end{split}\end{equation*}
Hence,
\begin{equation*}
    \lVert n \invLinProp  R_n^{\text{lin},+} \rVert_{\ell^2} \lesssim \epsilon t^{-9/10}
\end{equation*}
which is acceptable.  A similar argument gives the same bound for $n\invLinProp R_n^{\text{mod},+}$ once we use estimates~\cref{eqn:D-alpha-L-fs-bdds,eqn:D-alpha-fs-bdds}:
\begin{equation*}\begin{split}
    \left\lVert n \invLinProp  R_n^{\text{mod},+} \right\rVert_{\ell^2} =& \left\lVert \partial_{\xi} \left(\SSCutoff E(\xi,t) \left[D_\alpha \hat{\fs}(\xi;\alpha_+)\right] \cdot \partial_t \alpha_+\right) \right\rVert_{L^2}\\
    \lesssim& \lVert L D_\alpha \fs \rVert_{L^2}|\partial_t \alpha_+| + t^{{1/5}} \lVert \rho'(t^{1/5} \xi) D_\alpha \hat{\fs}(\xi,\alpha_+)\rVert_{L^2} |\partial_t \alpha_+|\\
    &+ \lVert \rho(t^{\frac{1}{5}}\xi) E'(\xi,t) D_{\alpha} \hat{\fs}(\xi, \alpha_+) \rVert_{L^2} |\partial_t \alpha_+|\\
    \lesssim& M^3\epsilon^5 t^{-9/10}
\end{split}\end{equation*}

\subsubsection{The estimates for the cubic term} 
The cubic term is the most difficult.  We begin by rewriting the term as
\begin{equation*}
    \invLinProp R^{\text{cubic},+}_n = \invLinProp \Rncp{1} + \invLinProp \Rncp{2} + \invLinProp \Rncp{3}
\end{equation*}
where $\Rncp{m}$, $m=1,2,3$ given by~\eqref{eqn:R-n-cubic-division}.
Taking the Fourier transform, we find that
\begin{equation}\label{eqn:R-cubic-nonlin-div}\begin{split}
    \mathcal{F}\left(n \Rncp{m}\right)(\xi + \fhPi) =& -\partial_\xi E e^{-it\xi^3} \hat{T}_{m^{R,m}}(\fs, \fs, \overline{\fs})\\
    &- E e^{-it\xi^3/3} \hat{T}_{\partial_\xi m^{R,m}}(\fs, \fs, \overline{\fs})\\
    &- tE e^{-it\xi^3/3} \hat{T}_{m^{R,m}}( \fs, |\fs|^2\fs, \overline{\fs})\\
    &- tE e^{-it\xi^3/3} \hat{T}_{\partial_\xi \phi_{\tmkdv}m^{R,m}}(\fs, \fs, \overline{\fs})\\
    &=: \rmI_m + \rmII_m + \rmIII_m + \rmIV_m
\end{split}\end{equation}
where $\phi_{\tmkdv}: \bbR^3 \to \bbR$ is the phase function function for the mKdV equation, given by
\begin{equation*}
    \phi_{\tmkdv}(\xi,\eta,\sigma) = \frac{-\xi^3 + \eta^3 + (\xi - \eta - \sigma)^3 + \sigma^3}{3} = - (\xi - \eta)(\xi - \sigma)(\eta + \sigma)
\end{equation*}
we now show how to control the terms $\rmI_m$, $\rmII_m$, $\rmIII_m$ and $\rmIV_m$ for $m=1,2,3$.

\paragraph{\textbf{Case $m=3$}:}  The rapid decay of $m^{R,3}$ at low frequencies allows us to get estimates with faster time decay than we need to bound~\eqref{eqn:ng-deriv-eqn-error}.  Recall that we have the decomposition
\begin{equation*}
    m^{R,3} = \sum_{2^j \lesssim t^{-{1/5}}} 2^{3j} m_j
\end{equation*}
which gives us the bounds
\begin{equation*}\begin{split}
    \lVert \rmI_3 \rVert_{L^2_\xi} \lesssim& \smashoperator[r]{\sum_{2^j \lesssim t^{-{1/5}}}} t2^{7j} \lVert T_{m_j}(\fs, \fs, \overline{\fs}) \rVert_{L^2_x}\\
    \lesssim& \smashoperator[r]{\sum_{2^j \lesssim t^{-{1/5}}}} t2^{7j} \lVert \fs \rVert_{L^6_x}^3\\
    \lesssim& \epsilon^3 t^{-\frac{37}{30}}\\
    \lVert \rmIII_3 \rVert_{L^2_\xi} \lesssim& \smashoperator[r]{\sum_{2^j \lesssim t^{-1/5}}} t2^{3j} \lVert T_{m_j}(\fs, |\fs|^2\fs, \overline{\fs}) \rVert_{L^2}\\
    \lesssim& \smashoperator[r]{\sum_{2^j \lesssim t^{-1/5}}} t2^{3j} \lVert \fs \rVert_{L^{10}_x}^5\\
    \lesssim& \epsilon^5 t^{-\frac{11}{10}}
\end{split}\end{equation*}
Similarly, using the decomposition
\begin{equation*}\begin{split}
    \partial_\xi m^{R,3} =& \smashoperator[r]{\sum_{2^k \sim t^{-{1/5}}}} 2^{2k} m_k\\
\end{split}\end{equation*}
we find that
\begin{equation*}\begin{split}
    \lVert \rmII_3 \rVert_{L^2_\xi} \lesssim& \sum_{2^k \sim t^{-{1/5}}} 2^{2k} \lVert T_{m_k}(\fs, \fs, \overline{\fs}) \rVert_{L^2_x}\\
    \lesssim& \epsilon^3 t^{-\frac{37}{30}}
\end{split}\end{equation*}
Finally, since $\partial_\xi \phi_{\tmkdv}$ is quadratic in the phase variables,
\begin{equation*}
    \partial_{\xi} \phi_{\tmkdv} m^{R,3} = \smashoperator[r]{\sum_{2^j \lesssim t^{-1/5}}} 2^{5j} m_j
\end{equation*}
so
\begin{equation*}\begin{split}
    \lVert \rmIV_3 \rVert_{L^2_\xi} \lesssim& \smashoperator[r]{\sum_{2^j \lesssim t^{-1/5}}} t2^{5j} \lVert T_{m_j}(Q_{\sim j}^\tcts\fs, \fs, \overline{\fs}) \rVert_{L^2_x} + \bsim\\
    \lesssim& \smashoperator[r]{\sum_{2^j \lesssim t^{-1/5}}} t2^{5j} \lVert T_{m_j}(\chi_{\geq j - 20}^\tcts Q_{\sim j}^\tcts\fs, \chi_{\geq j - 30}^\tcts \fs, \chi_{\geq j - 30}^\tcts \overline{\fs}) \rVert_{L^2_x}\\
    &+ \smashoperator[r]{\sum_{2^j \lesssim t^{-1/5}}} t2^{5j} \lVert T_{m_j}(\chi_{< j - 20}^\tcts Q_{\sim j}^\tcts\fs,  \fs,  \overline{\fs}) \rVert_{L^2_x}\\
    &+ \bsime\\
    \lesssim& \epsilon^3 t^{-\frac{11}{10}}
\end{split}\end{equation*}
All of these estimates are consistent with~\eqref{eqn:n-Rn-desired-bound}.

\paragraph{\textbf{Case $m=2$}:} Recall that $m^{R,2}$ is supported in the region
\begin{equation}\label{eqn:m-R-2-supp-cond}
    \left\{(\xi,\eta,\sigma) : |\xi|, \max(|\eta|, |\sigma|, |\xi - \eta - \sigma|)  \sim t^{-1/5} \right\}
\end{equation}
Thus, using the estimate $|\partial_\xi E| \lesssim t 2^{4j}$ together with the bounds of $\fs$ from~\eqref{eqn:fs-decay-bdds} and~\eqref{eqn:fs-decay-bdds-low}, we find that
\begin{equation*}\begin{split}
    \lVert \rmI_2 \rVert_{L^2_\xi} \lesssim& \smashoperator[r]{\sum_{2^j \sim t^{-1/5}}}t 2^{5j} \lVert T_{m_j}(Q_{\sim j}^\tcts \fs, \fs, \overline{\fs})\rVert_{L^2} + \bsim\\
    \lesssim& \smashoperator[r]{\sum_{2^j \sim t^{-1/5}}}t 2^{5j} \lVert T_{m_j}(\chi^\tcts_{\geq j - 20}Q_{\sim j}^\tcts \fs, \chi^\tcts_{\geq j - 30}\fs, \chi^\tcts_{\geq j - 30}\overline{\fs})\rVert_{L^2}\\
    &+ \smashoperator[r]{\sum_{2^j \sim t^{-1/5}}}t 2^{5j} \lVert T_{m_j}(\chi^\tcts_{< j - 20}Q_{\sim j}^\tcts \fs, \fs, \overline{\fs})\rVert_{L^2} + \bsime\\
    \lesssim& \epsilon^3 \sum_{2^j \sim t^{-1/5}} 2^{9/2j} + \epsilon^3 \sum_{2^j \sim t^{-1/5}} t^{-2/9} 2^{23/6 j}\\
    \lesssim& \epsilon^3 t^{-\frac{9}{10}}
\end{split}
\end{equation*}
The estimate for $\rmII_2$ is analogous, since
\begin{equation*}
    \partial_\xi m^{R,2} = \sum_{2^j \sim t^{-1/5}} m_j = \sum_{2^j \sim t^{-1/5}} t 2^{5j} m_j
\end{equation*}
Turning to~$\rmIII_2$, we write
\begin{subequations}\begin{align}
    \rmIII_2 =& \sum_{2^j \sim t^{-1/5}} t2^j \hat{T}_{m_j}(\fs, Q_{\sim j}^\tcts (|\fs|^2\fs), \overline{\fs})\label{eqn:III-2-a}\\
    &+ \sum_{2^j \sim t^{-1/5}} t2^j \hat{T}_{m_j}(Q_{\sim j}^\tcts\fs, |\fs|^2\fs, \overline{\fs})\label{eqn:III-2-b}\\
    &+\bsim \notag
\end{align}
\end{subequations}
The argument for~\eqref{eqn:III-2-b} is similar to the one used to control $\rmI_2$:
\begin{equation*}\begin{split}
    \eqref{eqn:III-2-b} \lesssim& \smashoperator[r]{\sum_{2^j \sim t^{-1/5}}} \lVert T_{m_j}(\chi_{\geq j - 20}^\tcts \fs, \chi_{\geq j - 30}^\tcts |\fs|^2\fs, \chi_{\geq j - 30}^\tcts \overline{\fs}) \rVert_{L^2}\\
    &+ \smashoperator[r]{\sum_{2^j \sim t^{-1/5}}} \lVert T_{m_j}(\chi_{< j - 20}^\tcts \fs, |\fs|^2\fs,\overline{\fs}) \rVert_{L^2}\\
    \lesssim& \epsilon^5 t^{-\frac{9}{10}}
\end{split}
\end{equation*}
For~\eqref{eqn:III-2-a}, we note that the frequency localization of $Q_{\sim j}^\tcts(|\fs|^2\fs)$ gives it better decay near $x = 0$ compared to the same term with no frequency localization.  In particular, using~\Cref{lem:LP-pseudoloc-lemma} we have that for $k < j - 50$,
\begin{equation*}\begin{split}
    \lVert \chi_{k}^\tcts Q_{\sim j}^\tcts(|\fs|^2\fs) \rVert_{L^2} =& \lVert \chi_{k}^\tcts Q_{\sim j}^\tcts( \chi_{[k-20, k+20]}^\tcts |\fs|^2Q_{[j - 20, j + 20]}^\tcts\fs)\rVert_{L^2} +\bsime \\
    \lesssim& \epsilon^3 t^{-4/3} 2^{-3/2j}
\end{split}\end{equation*}
(cf.~\cite[Section 3.2]{stewartLongTimeDecay2021}).  Thus, we have the bound
\begin{equation*}\begin{split}
    \lVert \eqref{eqn:III-2-a} \rVert_{L^2} \lesssim& \sum_{2^j \sim t^{-1/5}} \sum_{k < j - 50} t 2^j \lVert T_{m_j}(\chi_{\sim k}^\tcts \fs, \chi_{k}^\tcts Q_{\sim j}^\tcts(|\fs|^2\fs), \chi_{\sim k}^\tcts \overline{\fs}) \rVert_{L^2}\\
    &+ \sum_{2^j \sim t^{-1/5}} t 2^j \lVert T_{m_j}(\chi_{\geq j - 40}^\tcts \fs, \chi_{\geq j - 50}^\tcts Q_{\sim j}^\tcts(|\fs|^2\fs), \chi_{\geq j - 40}^\tcts \overline{\fs}) \rVert_{L^2}\\
    \lesssim& \epsilon^5 t^{-\frac{9}{10}}
\end{split}
\end{equation*}

The only remaining term is $\rmIV_2$.  To control this term, we must take advantage of the space-time resonance structure in $\phi_{\tmkdv}$.  After taking advantage of the vanishing of $\partial_\xi \phi$, we find that the space-time resonances are
\begin{equation}\label{eqn:mKdV-str-def}\begin{split}
    \mathcal{S}_{\tmkdv} =& \{\eta = \sigma = \xi/3\}\\
    \mathcal{T}_{\tmkdv} =& \{\xi = \eta\} \cup \{\xi = \sigma\}\\
    \mathcal{R}_{\tmkdv} =& \{(0,0,0)\}
\end{split}\end{equation}
Note that the space-time resonance set $\mathcal{R}_\tmkdv$ is separated from the support of $m^{R,2}$, so we only need to consider the space and time resonances.  Let $\chi^\mathcal{S}_{\tmkdv}$ and $\chi^\mathcal{T}_{\tmkdv}$ be a smooth, zero-homogeneous partition of unity on $\bbR^3 \setminus \{0\}$ such that $\supp \chi^\mathcal{S}_{\tmkdv} \cap \mathcal{S} = \supp \chi^\mathcal{T}_{\tmkdv} \cap \mathcal{T} = \emptyset$.  Then, we can write
\begin{subequations}\begin{align}
    \rmIV_2 =& -t E \hat{T}_{\partial_\xi \phi_{\tmkdv} \chi^\mathcal{S}_{\tmkdv} e^{it\phi_{mKdV}} m^{R,2}}(\theta, \theta, \overline{\theta})\label{eqn:m-R-2-d-snr}\\
    &- t E \hat{T}_{\partial_\xi \phi_{\tmkdv} \chi^\mathcal{T}_{\tmkdv} e^{it\phi_{mKdV}} m^{R,2}}(\theta, \theta, \overline{\theta})\label{eqn:m-R-2-d-tnr}
\end{align}\end{subequations}
We first consider the space non-resonant terms~\eqref{eqn:m-R-2-d-snr}.  By integrating by parts in frequency using the identity
\begin{equation*}
    \frac{1}{it|\deles \phi_{\tmkdv}|^2}\deles \phi_{\tmkdv} \cdot \deles e^{it\phi_{\tmkdv}} = e^{it\phi_{\tmkdv}}
\end{equation*}
and noting that $\mathcal{F}^{-1}(Ee^{-it\xi^3/3}) = \invLinProp $, we find that
\begin{subequations}\begin{align}
    \mathcal{F}^{-1} \eqref{eqn:m-R-2-d-snr} =& \invLinProp  T_{\deles \cdot \left(\deles \phi_{\tmkdv} \frac{\partial_\xi \phi_\tmkdv m^{R,2} \chi^\mathcal{S}_\tmkdv}{|\deles \phi_{\tmkdv}|^2}\right)}(\fs, \fs, \fs)\label{eqn:m-R-2-d-snr-symb-deriv}\\
    &+ \invLinProp  T_{\frac{\partial_{\eta}\phi_\tmkdv \partial_\xi \phi_{\tmkdv m^{R,2} \chi^\mathcal{S}_{\tmkdv}}}{|\deles \phi_\tmkdv|^2}}(t|\fs|^2\fs, \fs, \fs)\label{eqn:m-R-2-d-snr-arg-deriv-1}\\
    &+ \bsim\notag
\end{align}\end{subequations}
Now, the symbol for the paraproduct in~\eqref{eqn:m-R-2-d-snr-symb-deriv} can be written as
\begin{equation*}
    \deles \cdot \left(\deles \phi_{\tmkdv} \frac{\partial_\xi \phi_\tmkdv m^{R,2} \chi^\mathcal{S}_\tmkdv}{|\deles \phi_{\tmkdv}|^2}\right) = \sum_{2^j \sim t^{-1/5}} m_j
\end{equation*}
so we can estimate the corresponding pseudoproduct using the same argument we used from $\rmII_2$. Similarly,~\eqref{eqn:m-R-2-d-snr-arg-deriv-1} can be estimated in the same manner as $\rmIII_2$.  Thus, it only remains to control the time non-resonant term~\eqref{eqn:m-R-2-d-tnr}.  In order to control~\eqref{eqn:ng-deriv-eqn-error}, we must bound
\begin{equation*}
    2 \Re \int_1^t \left\langle \eqref{eqn:m-R-2-d-tnr}, ng_n \right\rangle\;ds = -2\Re \int_{1}^t s \left\langle \mathcal{F}^{-1} \left[ E \hat{T}_{\mu_s^\tmkdv\phimKdV e^{is\phimKdV}}(\theta, \theta, \overline{\theta})\right], n g_n \right\rangle\;ds
\end{equation*}
where $\mu_s^\tmkdv$ is the time-dependent symbol given by
\begin{equation*}
    \mu_s^\tmkdv(\xi,\eta,\sigma) = \frac{\partial_\xi \phimKdV \chi^\cT_{\tmkdv} m^{R,2}}{\phimKdV}
\end{equation*} 
Integrating by parts in time, we find that
\begin{subequations}\begin{align}
    2 \Re \int_1^t \left\langle \eqref{eqn:m-R-2-d-tnr}, ng_n \right\rangle\;ds =& 2\Im \left. s\left\langle e^{-is\Psi} T_{\mu_s^\tmkdv}(\fs, \fs, \overline{\fs}) , n g_n\right\rangle\;ds\right|_{s = 1}^{s = t}\label{eqn:m-R-2-d-tnr-a}\\
    &+ 2\Im \int_1^t \left\langle e^{-is\Psi} T_{\mu_s^\tmkdv}(\fs, \fs, \overline{\fs}) , n g_n\right\rangle\;ds\label{eqn:m-R-2-d-tnr-b}\\
    &+ 2\Im \int_1^t s\left\langle \mathcal{F}^{-1}\left[\partial_s E \hat{T}_{\mu_s^\tmkdv e^{is\phimKdV}}(\theta, \theta, \overline{\theta}) \right], n g_n\right\rangle\;ds\label{eqn:m-R-2-d-tnr-c}\\
    &+ 2\Im \int_1^t s\left\langle e^{-is\Psi} T_{\partial_s\mu_s^\tmkdv }(\fs, \fs, \overline{\fs}) , n g_n\right\rangle\;ds\label{eqn:m-R-2-d-tnr-d}\\
    &+ 2\Im \int_1^t s\left\langle e^{-is\Psi} T_{\mu_s^\tmkdv}(\fs, \fs, s|\fs|^2\overline{\fs}), n g_n\right\rangle\;ds\label{eqn:m-R-2-d-tnr-e}\\
    &+ 2\Im \int_1^t s\left\langle e^{-is\Psi} T _{\mu_s^\tmkdv}(\fs, \fs,  \overline{\fs}), n \partial_s g_n\right\rangle\;ds\label{eqn:m-R-2-d-tnr-f}\\
    &+ \bsim\notag
\end{align}\end{subequations}
We now show how to bound the terms~\cref{eqn:m-R-2-d-tnr-a,eqn:m-R-2-d-tnr-b,eqn:m-R-2-d-tnr-c,eqn:m-R-2-d-tnr-d,eqn:m-R-2-d-tnr-e}.  The term~\eqref{eqn:m-R-2-d-tnr-f} will be considered in~\Cref{sec:final-ngn}.  Starting with~\eqref{eqn:m-R-2-d-tnr-a}, we note that the desired bound follows if we can show the pseudoproduct term decays sufficiently rapidly in $L^2$.  We have the estimate
\begin{equation}\label{eqn:mu-tmkdv-est}\begin{split}
    \lVert T_{\mu_s^\tmkdv }(\fs,\fs, \overline{\fs}) \rVert_{L^2} \leq& \sum_{2^j \sim s^{-1/5}} \lVert T_{m_k}(Q_{\sim j}^{\tcts} \fs, \fs, \overline{\fs})\rVert_{L^2} + \bsim\\
    \lesssim& \epsilon^3 s^{-\frac{9}{10}}
\end{split}\end{equation}
This immediately gives us the bounds
\begin{equation*}\begin{split}
    |\eqref{eqn:m-R-2-d-tnr-a}| \lesssim& \epsilon^3 t^{1/10} \lVert n g_n \rVert_{\ell^2}\\
    |\eqref{eqn:m-R-2-d-tnr-b}| \lesssim& \int_1^t \epsilon^3 s^{-9/10} \lVert ng_n \rVert_{\ell^2} \;ds
\end{split}\end{equation*}
which are compatible with~\eqref{eqn:n-gn-desired-est}.  The same argument also lets us control~\eqref{eqn:m-R-2-d-tnr-d}, since $s\partial_s \mu_s^\tmkdv$ satisfies the same support and symbol conditions as $\mu_s^\tmkdv$, and the estimate for~\eqref{eqn:m-R-2-d-tnr-c} is similar once we observe that $|s \partial_s E| \lesssim 1$ on the support of the pseudoproduct.
Turning to~\eqref{eqn:m-R-2-d-tnr-e}, we recall that
\begin{equation*}
    \partial_s \theta = e^{s\partial_x^3/3} \partial_x(|\fs|^2 \fs)
\end{equation*}
So we can write
\begin{equation*}
     sT_{\mu_s^\tmkdv e^{is\phimKdV}}(\theta, \theta, \partial_s \overline{\theta}) = e^{s\partial_x^3/3} \sum_{2^j \sim s^{-1/5}} s2^j T_{m_j}(\fs, \fs, |\fs|^2\overline{\fs})
\end{equation*}
These terms have the same form as those considered in $\rmII_2$, so we have the bound
\begin{equation*}
    |\eqref{eqn:m-R-2-d-tnr-d} | \lesssim  \int_1^t \epsilon^5 s^{-9/10} \lVert n g_n \rVert_{\ell^2}\;ds
\end{equation*}

\paragraph{\textbf{Case $m=1$}:} We now consider the bounds for~\eqref{eqn:R-cubic-nonlin-div} with $m = 1$.  Note that the symbol $m^{R,1}$ is supported on the region ${|\xi| \lesssim t^{-1/5} \lesssim \max(|\eta|, |\sigma|, |\xi - \eta - \sigma|)}$.  Thus, we can write
\begin{equation}\label{eqn:m-R-1-symbol-decomp}
    m^{R,1} = m^{R,1}_{\text{lo}} + m^{R,1}_{\text{hi,hi,lo}} + m^{R,1}_{\text{hi,hi,hi}}
\end{equation}
where the symbol $m^{R,1}_{\text{lo}}$ is supported in the region where $\max\{|\xi - \eta - \sigma|, |\eta|, |\sigma|\} \sim t^{-1/5}$, the symbol $m^{R,1}_{\text{hi,hi,lo}}$ is supported in the region where two of the input  frequencies are much greater than the third, that is, ${\max (|\xi - \eta - \sigma|, |\eta|, |\sigma|) \gg \min (|\xi - \eta - \sigma|, |\eta|, |\sigma|)}$, and the remaining symbol $m^{R,1}_{\text{hi,hi,hi}}$ is supported in the region where $|\xi-\eta-\sigma| \sim |\eta| \sim |\sigma| \gg t^{-1/5}$.

We begin with the argument for $\rmI_1$.  Using the decomposition~\eqref{eqn:m-R-1-symbol-decomp}, we can naturally decompose $\rmI_1$ as
\begin{equation*}\begin{split}
    \rmI_1 =& \partial_\xi E e^{-i\xi^3t} \hat{T}_{m^{R,1}_{\text{lo}}}(\fs, \fs, \overline{\fs}) + \partial_\xi E e^{-i\xi^3t} \hat{T}_{m^{R,1}_{\text{hi,hi,lo}}}(\fs, \fs, \overline{\fs}) + \partial_\xi E e^{-i\xi^3t} \hat{T}_{m^{R,1}_{\text{hi,hi,hi}}}(\fs, \fs, \overline{\fs})\\
    &= \rmI_1^{\text{lo}} + \rmI_1^{\text{hi,hi,lo}} + \rmI_1^{\text{hi,hi,hi}}
\end{split}\end{equation*}
The argument for $\rmI_1^{\text{lo}}$ is essentially the same as the one for $\rmI_2$, giving the bound
\begin{equation*}
    |\rmI_1^\text{lo}| \lesssim \epsilon^3 t^{-\frac{9}{10}}
\end{equation*}
We now turn to the other two terms.  For $\rmI_1^{\text{hi,hi,lo}}$, we write
\begin{equation*}
    \rmI_1^{\text{hi,hi,lo}} = \sum_{2^k \gg 2^j \gtrsim t^{-1/5}} t^{1/5} 2^k T_{m_{t^{-1/5}, j,k}}(\fs, \fs, \overline{\fs}) + \bsime
\end{equation*}
where $m_{t^{-1/5}, j, k}$ is supported on the region $|\xi| \lesssim t^{-1/5}$, $|\eta| \sim 2^j$, $|\sigma| \sim 2^k$, $|\xi - \eta - \sigma| \sim 2^k$ and satisfies the symbol bound
\begin{equation*}
    |\partial_\xi^a \partial_\eta^b \partial_\sigma^c m_{t^{-1/5}, k, j}| \lesssim_{a,b,c} t^{a/5} 2^{-bk} 2^{-cj}
\end{equation*}
In particular, $ m_{t^{-1/5}, j, k}$ satisfies the hypotheses of~\Cref{thm:pseudolocality-lemma}, so the main contribution from the pseudoproduct terms in comes from the regions $x \sim t 2^{2j}$ and $x \sim t 2^{2k}$:
\begin{equation*}\label{eqn:T-m-t-1-5-k-j-est}
\begin{split}
    \lVert T_{m_{t^{-\frac{1}{5}}, k, j}}(\fs, \fs, \overline{\fs}) \rVert_{L^{2}} \lesssim& \lVert T_{m_{t^{-1/5}, j, k}}(\chi_{[j-20,j+20]}^\tcts Q^\tcts_{\sim j}\fs, \chi_{[j-30,j+30]}^\tcts Q^\tcts_{\sim k}\fs, \chi_{[j-30,j+30]}^\tcts Q^\tcts_{\sim k}\overline{\fs}) \rVert_{L^{2}}\\
    &+  \lVert T_{m_{t^{-1/5, j, k}}}(\chi_{[k - 30, k+30]}^\tcts Q^\tcts_{\sim j}\fs, Q^\tcts_{\sim k} \fs, \chi_{[k - 20, k+20]}^\tcts Q^\tcts_{\sim k}\overline{\fs}) \rVert_{L^2}\\
    &+ \bbetter\\
    \lesssim& \epsilon^3 t^{-4/3} 2^{j/2}2^{-2k}
\end{split}\end{equation*}
Summing, we find that
\begin{equation*}
    \lVert \rmI_1^{\text{hi,hi,lo}} \rVert_{L^2} \lesssim \epsilon^3 t^{-\frac{31}{30}}
\end{equation*}

For the pseudoproduct $\rmI_1^{\text{hi,hi,hi}}$, the input frequencies $\eta$, $\sigma$ and $\xi - \eta - \sigma$ all have comparable magnitude.  A quick calculation shows that dividing dyadically in frequency space and estimating using~\Cref{thm:lin-decay-al} produces bounds which are too weak to give the desired estimate.  To resolve the problem, we introduce a more granular division in frequency.  This has two advantages. First, it lets us take better advantage of the low frequency cancellations from the $\partial_\xi E$ factor.  Second, it gives us some additional linear decay from the fact that all of the input frequencies cannot be exactly equal, which causes the terms of the pseudoproduct to concentrate in different places in physical space.  This second effect can be quantified using the following lemma:
\begin{lemma}\label{lem:fs-loc-est}
    Let $\chi_{\ell,m}(x) = \chi \left(\frac{x}{t2^{2\ell}} - m^2\right)$ be a bump function supported on $x =  t2^{2\ell}(m + O(1))^2$, and let $Q_{\ell,n}^\tcts$ be an operator which smoothly projects to frequencies $|\xi - n 2^\ell| \lesssim 2^\ell$.  Then, if $||n| - |m|| \gg 1$, 
    \begin{equation*}
        |\chi_{\ell,m} Q_{\ell,n}^\tcts \fs| \lesssim \epsilon t^{-5/6} |n^2 - m^2|^{-1} 2^{-3/2 \ell}
    \end{equation*}
\end{lemma}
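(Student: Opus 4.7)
The plan is to establish this refined decay via a single integration by parts in frequency, using the Airy profile representation. Writing $\theta(x,t;\alpha) = e^{(t/3)\partial_x^3}\fs(x,t;\alpha)$ so that
\[
Q_{\ell,n}^\tcts\fs(x,t;\alpha) = \frac{1}{\sqrt{2\pi}}\int_{\mathbb{R}} e^{i\phi(x,\xi,t)}\,\tilde\psi_{\ell,n}(\xi)\,\hat\theta(\xi,t;\alpha)\,d\xi,
\]
with $\phi(x,\xi,t) = -x\xi - t\xi^3/3$ and $\tilde\psi_{\ell,n}(\xi)$ a smooth bump adapted to $|\xi - n2^\ell|\lesssim 2^\ell$, I would exploit the two bounds $\lVert\hat\theta\rVert_{L^\infty}\lesssim\epsilon$ and $\lVert\partial_\xi\hat\theta\rVert_{L^2} = \lVert x\theta\rVert_{L^2}\lesssim t^{1/6}\epsilon^3$ from \eqref{eqn:theta-L-inf-bdd} and \eqref{eqn:theta-L2-bdd}. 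The key observation is that on the joint support of $\chi_{\ell,m}(x)\tilde\psi_{\ell,n}(\xi)$ one has $|x|\sim t2^{2\ell}m^2$ and $\xi^2\sim 2^{2\ell}n^2$, so the phase derivative satisfies
\[
|\phi'(x,\xi,t)| = |x + t\xi^2| \gtrsim t\, 2^{2\ell}\,|n^2 - m^2|
\]
whenever $||n| - |m|| \gg 1$ (regardless of the relative sign of $x$ and $t\xi^2$, since the combination $m^2+n^2$ trivially dominates $|n^2-m^2|$).

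I would then integrate by parts once via $e^{i\phi} = \partial_\xi(e^{i\phi})/(i\phi')$, producing, after the product rule, a \emph{curvature piece} proportional to $\tilde\psi_{\ell,n}\hat\theta\,\phi''/(\phi')^2$ (with $\phi''=-2t\xi$) and a \emph{symbol-derivative piece} proportional to $\partial_\xi(\tilde\psi_{\ell,n}\hat\theta)/\phi'$. The curvature piece is bounded pointwise using $|\phi''|\lesssim t2^\ell(|n|+1)$ and integrated against $\lVert\hat\theta\rVert_{L^\infty}\lesssim\epsilon$ over the support of width $2^\ell$, giving a quantity of order $\epsilon(|n|+1)/(t2^{2\ell}|n^2-m^2|^2)$; applying the algebraic identity $(|n|+1)/|n^2-m^2| \lesssim ||n|-|m||^{-1}\lesssim 1$ together with the high-frequency constraint $2^\ell\gtrsim t^{-1/3}$ (so that $t^{1/6}2^{\ell/2}\gtrsim 1$) reduces this to the target $\epsilon t^{-5/6}2^{-3\ell/2}|n^2-m^2|^{-1}$. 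For the symbol-derivative piece, one splits into a cutoff-derivative summand $2^{-\ell}\tilde\psi'_{\ell,n}\hat\theta$, controlled by $\lVert\hat\theta\rVert_{L^\infty}$, and a profile-derivative summand $\tilde\psi_{\ell,n}\partial_\xi\hat\theta$ controlled via Cauchy--Schwarz over the $2^\ell$-wide support to yield $t^{1/6}\epsilon^3\cdot 2^{\ell/2}/(t2^{2\ell}|n^2-m^2|) = \epsilon^2\cdot\epsilon t^{-5/6}2^{-3\ell/2}|n^2-m^2|^{-1}$, which lies within the target by smallness of $\epsilon$.

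The main obstacle is precisely the balance of the $t^{1/6}$ growth from the weighted $L^2$ bound on $\theta$ against the $t^{-5/6}$ decay sought in the conclusion. The algebraic magic that resolves this is the constraint $2^\ell\gtrsim t^{-1/3}$ built into the definition of $Q_{\ell,n}^\tcts$, which ensures $t^{-1/6}2^{-\ell/2}\leq 1$ and closes the estimate with room to spare. A single integration by parts is both necessary and sufficient: further differentiation of $\hat\theta$ is unavailable (only the first derivative is controlled, and only in $L^2$), while a single application already supplies the one factor of $|n^2-m^2|^{-1}$ stated in the conclusion. Apart from this balance, the argument is robust — in particular the sign conventions in the center of $\chi_{\ell,m}$ only strengthen the lower bound on $|\phi'|$.
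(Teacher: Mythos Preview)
Your proposal is correct and follows essentially the same approach as the paper: write $\fs$ via the Airy profile $\theta$, integrate by parts once using the lower bound $|\partial_\xi\phi|\gtrsim t\,2^{2\ell}|n^2-m^2|$, and control the resulting three terms (curvature, cutoff derivative, profile derivative) using $\lVert\hat\theta\rVert_{L^\infty}\lesssim\epsilon$ for the first two and Cauchy--Schwarz with $\lVert\partial_\xi\hat\theta\rVert_{L^2}\lesssim\epsilon^3 t^{1/6}$ for the third. Your observation that the constraint $2^\ell\gtrsim t^{-1/3}$ is what closes the balance between $t^{1/6}$ growth and $t^{-5/6}$ decay is exactly the mechanism the paper uses (implicitly) in its final line.
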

\begin{proof}
    Recall that $\fs(x,t) = e^{-\frac{t}{3}\partial_x^3} \theta(x,t)$, with $\lVert \hat{\theta} \rVert_{L^\infty} \lesssim \epsilon$ and $\lVert x \theta \rVert_{L^2} \lesssim \epsilon^3 t^{1/6}$.  Thus, defining 
    \begin{equation*}
        \phi_x(\xi) = \frac{\xi^3}{3} + \xi \frac{x}{t}
    \end{equation*}
    and integrating by parts, we obtain
    \begin{equation*}\begin{split}
        Q_{\ell,n}^\tcts \fs(x,t) =& \frac{1}{\sqrt{2\pi}} \int e^{it\phi_x} \widehat{Q^\tcts_{\ell,n}}(\xi) \hat{\theta}(\xi)\;d\xi\\
        =& -\frac{i}{\sqrt{2\pi}t} \int \frac{\partial_\xi^2 \phi_x(\xi) }{[\partial_\xi \phi_x(\xi)]^2}e^{it\phi_x(\xi)} \widehat{Q^\tcts_{\ell,n}}(\xi) \hat{\theta}(\xi)\;d\xi\\
        &+ \frac{i}{\sqrt{2\pi}t} \int \frac{1}{\partial_\xi \phi_x(\xi)}e^{it\phi_x(\xi)} \partial_\xi \widehat{Q^\tcts_{\ell,n}}(\xi) \hat{\theta}(\xi)\;d\xi\\
        &+ \frac{i}{\sqrt{2\pi}t} \int \frac{1}{\partial_\xi \phi_x(\xi)}e^{it\phi_x(\xi)}  \widehat{Q^\tcts_{\ell,n}}(\xi) \partial_\xi \hat{\theta}(\xi)\;d\xi
    \end{split}\end{equation*}
    Now, since $||n| - |m|| \gg 1$, we find that
    \begin{equation*}\begin{split}
        |\partial_\xi^2 \phi_x(\xi)| \sim& 2^\ell \jBra{n} \ll 2^{\ell} |n^2 - m^2|\\
        |\partial_\xi \phi_x(\xi)| \sim& 2^{2\ell} |n^2 - m^2|\\
    \end{split}
    \end{equation*}
    combining these bounds with the bounds for $\theta$, we find that
    \begin{equation*}\begin{split}
        \lVert \chi_{\ell,m} Q_{\ell,n}^\tcts \fs(x,t) \rVert_{L^\infty_x} \lesssim& \epsilon \frac{t^{-5/6}}{|n^2-m^2|} \left( \frac{t^{-1/6} \jBra{n}}{|n^2 - m^2| 2^{2\ell}} + \frac{t^{-1/6}}{2^{2\ell}} + \epsilon^22^{-3/2\ell} \right)\\
        \lesssim& \epsilon t^{-5/6} |n^2 - m^2|^{-1} 2^{-3/2\ell}\qedhere
    \end{split}\end{equation*}
\end{proof}
For integers $\vec{n} = (n_1, n_2, n_3) \in \bbZ^3$, let $m^{\vec{n}}_{\ell}$ denote a generic symbol supported on the region
\begin{equation*}
    \left\{|\xi|, |\eta - 2^\ell n_1|, |\sigma - 2^\ell n_2|, |\xi - \eta - \sigma - 2^\ell n_3| \lesssim 2^\ell\right\}
\end{equation*} 
and satisfying $|\partial_{\xi,\eta,\sigma}^\alpha m^{\vec{n}}_{\ell}| \lesssim 2^{-|\alpha| \ell}$.  Then, we can write
\begin{subequations}\begin{align}
    \rmI_1^{\text{hi,hi,hi}} =& \sum_{\crampedsubstack{t^{-1/3} \lesssim 2^{\ell} \lesssim t^{-1/5}\\2^{k+\ell} \gg t^{-1/5}}}\sum_{\vec{n} \in S_k}t 2^{5\ell+k} \hat{T}_{m^{\vec{n}}_{\ell}}(\fs, \fs, \overline{\fs})\notag\\
    =& \sum_{\crampedsubstack{t^{-1/3} \lesssim 2^{\ell} \lesssim t^{-1/5}\\2^{k+\ell} \gg t^{-1/5}}}\sum_{\vec{n} \in S_k^{12,-}}t 2^{5\ell+k} \hat{T}_{m^{\vec{n}}_{\ell}}(\fs, \fs, \overline{\fs}) \label{eqn:n1-like-n2}\\
    &+ \sum_{\crampedsubstack{t^{-1/3} \lesssim 2^{\ell} \lesssim t^{-1/5}\\2^{k+\ell} \gg t^{-1/5}}}\sum_{\vec{n} \in S_k^{\text{rem}}}t 2^{5\ell+k} \hat{T}_{m^{\vec{n}}_{\ell}}(\fs, \fs, \overline{\fs})\label{eqn:all-n-different}\\
    &+\bsim\notag
\end{align}\end{subequations}
where
\begin{equation*}\begin{split}
    S_k =& \{(n_1, n_2, n_3) : |n_1| \sim |n_2| \sim |n_3| \sim 2^k \text{ and } |n_1 + n_2 - n_3| \lesssim 1\}\\
    S_k^{ij,e} =& \{(n_1, n_2, n_3) \in S_k : |n_i +e n_j| \lesssim 1\}\qquad \text{for } (i,j,e) = (1,2,-), (1,3,+), (23,+)\\
    S_k^{\text{rem}} =& \{(n_1, n_2, n_3) \in S_k : ||n_1| - |n_2|| \gg 1, ||n_1| - |n_3|| \gg 1, ||n_2| - |n_3|| \gg 1\}\\
    =& S_k \setminus \left( S_k^{12,-} \cup S_k^{13,+} \cup S_k^{23,+}\right)
\end{split}\end{equation*}
and $\bsim$ denotes terms like~\eqref{eqn:n1-like-n2} where the summation in $\vec{n}$ is taken over another set $S_k^{ij,\pm}$.  Let us first consider the sum~\eqref{eqn:n1-like-n2}.  Because of the frequency restriction, $n_2 = n_1 + O(1)$ and $n_3 = 2 n_1 + O(1)$.  Thus, by~\Cref{lem:fs-loc-est,thm:pseudolocality-lemma}, the leading order contribution to the pseudoproducts in~\eqref{eqn:n1-like-n2} will come from the regions $x = t 2^{2\ell} (n_1 + O(1))^2$ and $x = t2^{2\ell}(2n_1 + O(1))^2$.  Let us define
\begin{equation*}\begin{split}
    \chi_{\ell, n + O(1)} =& \sum_{m = n + O(1)} \chi_{\ell, m}\\
    Q^\tcts_{\ell,n + O(1)} =& \sum_{m = n + O(1)} Q^\tcts_{\ell,m}\\
\end{split}\end{equation*}
By combining the refined decay estimate from~\Cref{lem:fs-loc-est} with the fact that $\chi_{\ell, n_1 + O(1)}$ is supported on a region of size $t 2^{2\ell + k}$ in physical space, we see that
\begin{equation*}
    \lVert \chi_{\ell, n_1 + O(1)}Q^\tcts_{\ell,n_3 + O(1)} \fs \rVert_{L^2} \lesssim \epsilon t^{-1/3} 2^{-\frac{3}{2}k-\ell/2}
\end{equation*}
Combining this with the fact that $\vec{n} \in S^{12,-}_k$ is determined by $n_1$ up to $O(1)$ ambiguity, we find that
\begin{equation*}\begin{split}
    \lVert \eqref{eqn:n1-like-n2} \rVert_{L^2} \lesssim& \smashoperator[lr]{\sum_{\substack{t^{-\frac{1}{3}} \lesssim 2^{\ell} \lesssim t^{-\frac{1}{5}}\\2^{k+\ell} \gg t^{-\frac{1}{5}}}}} t 2^{5\ell+k} \sum_{\vec{n} \in S^{12,-}_k}  \lVert T_{m_{\ell}^{\vec{n}}} (Q^{\tcts}_{\sim k + \ell} \fs, Q^{\tcts}_{\sim k + \ell} \fs, \chi_{\ell, n_1 + O(1)} \overline{Q^\tcts_{\ell, n_3 + O(1)} \fs})\rVert_{L^2} + \bbetter\\
    \lesssim& \epsilon^3 t^{-\frac{1}{3}} \sum_{\crampedsubstack{t^{-\frac{1}{3}} \lesssim 2^\ell \lesssim t^{-\frac{1}{5}}\\2^{k+\ell} \gg t^{-\frac{1}{5}}}}\sum_{|n_1| \sim 2^k} 2^{\frac{7}{2}\ell - \frac{3}{2}k} + \bbetter\\
    \lesssim& \epsilon^3 t^{-\frac{31}{30}}
\end{split}\end{equation*}
We now turn to~\eqref{eqn:all-n-different}.  The main contributions here occur in the regions $\supp \chi_{\ell, n_j + O(1)}$ for $j = 1,2,3$.  Thus, we find that
\begin{equation*}\begin{split}
    \lVert \eqref{eqn:all-n-different} \rVert_{L^2} \lesssim& \smashoperator[r]{\sum_{\crampedsubstack{2^\ell \lesssim t^{-1/5}\\ 2^{k+\ell} \gg t^{-1/5}\\ \vec{n} \in S^{\text{rem}}_k}}} t 2^{5\ell + k} \lVert T_{m_\ell^{\vec{n}}} (Q^\tcts_{\sim k + \ell} \fs, \chi_{\ell, n_1 + O(1)} Q^\tcts_{\ell, n_2 + O(1)} \fs, \chi_{\ell, n_1 + O(1)} \overline{Q^\tcts_{\ell, n_3 + O(1)} \fs} ) \rVert_{L^2} + \bbetter\\
    \lesssim& \epsilon^3 t^{-2/3} \sum_{\substack{t^{-1/3} \lesssim 2^\ell \lesssim t^{-1/5}\\ 2^{k+\ell} \gg t^{-1/5}}} 2^{\frac{5}{2}\ell + k} \sum_{\vec{n} \in S^{\text{rem}}_k} \frac{1}{|n_1^2 - n_2^2|} \frac{1}{|n_2^2 - n_3^2|}
\end{split}\end{equation*}
We first consider the sum in $\vec{n}$.  For $i < j$
\begin{equation*}
    |n_i^2 - n_j^2| = M_{ij} m_{ij} \sim 2^k m_{ij}
\end{equation*}
where $m_{ij} = ||n_i| - |n_j||$ and $M_{ij} = |n_i| + |n_j|$.  Now,
\begin{equation*}
    m_{23} = |m_{12} \pm m_{13}|
\end{equation*}
and more generally knowledge any two of the quantities $m_{12}$, $m_{13}$ and $m_{23}$ determines the third up to finite ambiguity.  The condition $|n_1 - n_2 + n_3| \lesssim 1$ then allows us to determine $\vec{n}$ up to finite ambiguity.  Thus, we have the bound
\begin{equation*}\begin{split}
    \sum_{\vec{n} \in S^{\text{rem}}_k} \frac{1}{|n_1^2 - n_2^2|} \frac{1}{|n_2^2 - n_3^2|} \lesssim& 2^{-2k}\sum_{1 \ll m_{12}, m_{13} \lesssim 2^k}  \frac{1}{m_{12}m_{13}}\\
    \lesssim& k^2 2^{-2k}
\end{split}
\end{equation*}
which allows us to conclude that
\begin{equation*}\begin{split}
    \lVert \eqref{eqn:all-n-different} \rVert_{L^2} \lesssim& \epsilon^3 t^{-2/3} \sum_{\substack{t^{-1/3} \lesssim 2^\ell \lesssim t^{-1/5}\\ 2^{k+\ell} \gg t^{-1/5}}} k^2 2^{\frac{5}{2}\ell - k} \\
    \lesssim& \epsilon^3 t^{-\frac{7}{6}}
\end{split}\end{equation*}
which is more than sufficient, completing the argument for $\rmI_1$.

Let us now consider $\rmII_1$.  As in~\eqref{eqn:m-R-1-symbol-decomp}, we can write
\begin{equation*}
    \partial_\xi m^{R,1} = \partial_\xi m^{R,1}_{\text{lo}} + \partial_\xi m^{R,1}_{\text{hi,hi,lo}} + \partial_\xi m^{R,1}_{\text{hi,hi,hi}}
\end{equation*}
where $\partial_\xi m^{R,1}_{\text{lo}}$ is supported on the region where $\max\{|\xi|, |\eta|, |\sigma|\} \sim t^{-1/5}$, $\partial_\xi m^{R,1}_{\text{hi,hi,lo}}$ is supported in the region $|\xi| \sim t^{-1/5}, \max\{|\xi|, |\eta|, |\sigma|\} \gg \min\{|\xi|, |\eta|, |\xi - \eta -\sigma|\} \gtrsim t^{-1/5}$, and $\partial_\xi m^{R,1}_{\text{hi,hi,hi}}$ is supported on $|\xi| \sim t^{-1/5}$, $|\eta| \sim |\sigma| \sim |\xi - \eta - \sigma| \gg t^{-1/5}$.  
\begin{equation*}
    \rmII_1 = \rmII_1^\text{lo} + \rmII_1^\text{hi,hi,lo} + \rmII_1^\text{hi,hi,hi} 
\end{equation*}
As before, the estimates for the pseudoproduct $\rmII_1^{\text{lo}}$ follow from those for $\rmII_2$.  Turning to $\rmII_1^\text{hi,hi,lo}$, we can use~\eqref{eqn:T-m-t-1-5-k-j-est} to obtain the estimates
\begin{equation*}\begin{split}
    \lVert \rmII_1^{\text{hi,hi,lo}} \rVert_{L^2} =& \lVert T_{\partial_\xi m^{R,1}_\text{hi,hi,lo}}(\fs, \fs,\overline{\fs}) \rVert_{L^2}\\
    \lesssim& \sum_{2^k \gg 2^j \gtrsim t^{{1/5}}} t^{1/5} 2^k \lVert T_{m_{t^{-1/5},j,k}} (\fs, \fs, \overline{\fs}) \rVert_{L^2} + \bsime\\
    \lesssim& \epsilon^3 t^{-\frac{31}{30}}
\end{split}\end{equation*}
Now, we turn to the high frequency interactions.  Recalling that $\partial_\xi m^{R,1}_{\text{hi,hi,hi}}$ is supported in a region where $|\xi| \sim t^{-1/5}$, we find that
\begin{subequations}\begin{align}
    \lVert \rmII_1^{\text{hi,hi,hi}} \rVert_{L^2} \lesssim& \sum_{\substack{2^\ell \sim t^{-1/5}\\2^{k+\ell} \gg t^{-1/5}}} t^{1/5} 2^{k + \ell} \sum_{\vec{n} \in S_k^{12,-}} \lVert T_{m_\ell^{\vec{n}}}(\fs, \fs, \overline{\fs})\rVert_{L^2}\label{eqn:dxi-n1-like-n2}\\
    &+ \sum_{\substack{2^\ell \sim t^{-1/5}\\2^{k+\ell} \gg t^{-1/5}}} t^{1/5} 2^{k + \ell} \sum_{\vec{n} \in S_k^{\text{rem}}} \lVert T_{m_\ell^{\vec{n}}}(\fs, \fs, \overline{\fs})\rVert_{L^2}\label{eqn:dxi-all-n-different}\\
    &+ \bsim\notag
\end{align}\end{subequations}
By modifying the reasoning for~\cref{eqn:n1-like-n2,eqn:all-n-different}, we see that
\begin{equation*}\begin{split}
    \lVert \eqref{eqn:dxi-n1-like-n2} \rVert_{L^2} \lesssim& \epsilon^3 t^{-\frac{14}{15}}\\
    \lVert \eqref{eqn:dxi-all-n-different} \rVert_{L^2} \lesssim& \epsilon^3 t^{-7/6}
\end{split}\end{equation*}

We now turn to~$\rmIII_1$.  As above, we write
\begin{equation*}\begin{split}
    \rmIII_1 =& T_{m^{R,1}_{\text{lo}}}(\fs, t|\fs|^2\fs, \overline{\fs}) + T_{m^{R,1}_{\text{hi,hi,lo}}}(\fs, t|\fs|^2\fs, \overline{\fs}) + T_{m^{R,1}_{\text{hi,hi,hi}}}(\fs, t|\fs|^2\fs, \overline{\fs})\\
    =:& \rmIII_1^\text{lo} + \rmIII_1^\text{hi,hi,lo} + \rmIII_1^\text{hi,hi,hi}
\end{split}\end{equation*}
Again, the bound for $\rmIII_1^\text{lo}$ follows from the same reasoning used for $\rmIII_2$, so we are left to consider the last two terms.  For $\rmIII_1^\text{hi,hi,lo}$, we separate the sum where the cubic term $|\fs|^2\fs$ is localized at high frequency from the one where it is localized to a low frequency to write
\begin{subequations}\begin{align}
    \rmIII_1^\text{hi,hi,lo} =& \sum_{2^k \gg 2^j \gtrsim t^{-1/5}} t2^k T_{m_{t^{-1/5},k,j}}(\fProjc{\sim k}\fs, \fProjc{\sim k}|\fs|^2\fs, \fProjc{\sim j}\overline{\fs})\label{eqn:m-R-1-c-hhl-cubic-hi}\\
    &+ \sum_{2^k \gg 2^j \gtrsim t^{-1/5}} t2^k T_{m_{t^{-1/5},k,j}}(\fProjc{\sim k}\fs, \fProjc{\sim j}|\fs|^2\fs, \fProjc{\sim k}\overline{\fs})\label{eqn:m-R-1-c-hhl-cubic-lo}\\
    &+ \bsim\notag
\end{align}
\end{subequations}
For the high frequency case, based on the physical-space localization of the terms in the pseudoproduct, we see that the leading order contributions from each term in the sum comes in the regions $x \sim t2^{2k}$ and $x \sim t 2^{2j}$.  A straightforward calculation then yields the bound
\begin{equation*}\begin{split}
    \lVert \eqref{eqn:m-R-1-c-hhl-cubic-hi} \rVert_{L^2} \leq& \sum_{2^k \gg 2^j \gtrsim t^{-1/5}} \lVert t2^k \chi_{\sim k}^\tcts T_{m_{t^{-1/5},k,j}}(\fProjc{\sim k}\fs, \fProjc{\sim k}|\fs|^2\fs, \fProjc{\sim j}\overline{\fs}) \rVert_{L^2}\\
    &+ \sum_{2^k \gg 2^j \gtrsim t^{-1/5}} \lVert t2^k \chi_{\sim j}^\tcts T_{m_{t^{-1/5},k,j}}(\fProjc{\sim k}\fs, \fProjc{\sim k}|\fs|^2\fs, \fProjc{\sim j}\overline{\fs}) \rVert_{L^2}+ \bbetter\\
    \lesssim& \epsilon^5 t^{-\frac{37}{30}}
\end{split}\end{equation*}
Similarly, for the low frequency sum, we find that
\begin{equation*}\begin{split}
    \lVert \eqref{eqn:m-R-1-c-hhl-cubic-lo} \rVert_{L^2} \leq& \sum_{2^k \gg 2^j \gtrsim t^{-1/5}} \lVert t2^k \chi_{\sim k}^\tcts T_{m_{t^{-1/5},k,j}}(\fProjc{\sim k}\fs, \fProjc{\sim j}|\fs|^2\fs, \fProjc{\sim k}\overline{\fs}) \rVert_{L^2}\\
    &+ \sum_{2^k \gg 2^j \gtrsim t^{-1/5}} \lVert t2^k \chi_{\sim j}^\tcts T_{m_{t^{-1/5},k,j}}(\fProjc{\sim k}\fs, \fProjc{\sim j}|\fs|^2\fs, \fProjc{\sim k}\overline{\fs}) \rVert_{L^2}+ \bbetter\\
    \lesssim& \epsilon^5 t^{-\frac{11}{10}}
\end{split}
\end{equation*}
For $\rmIII_1^\text{hi,hi,hi}$, we note that
\begin{equation*}\begin{split}
    \lVert \rmIII_1^\text{hi,hi,hi} \rVert_{L^2} =& \biggl\lVert\sum_{2^j \gg t^{-1/5}} t 2^j T_{m_j}(Q_{\sim j}^\tcts \fs, Q_{\sim j}^\tcts (|\fs|^2 \fs), \overline{Q_{\sim j}^\tcts \fs}) \biggr \rVert_{L^2}\\
    \lesssim& \sum_{2^j \gg t^{-1/5}} t 2^j \lVert Q_{\sim j}^\tcts \fs \rVert_{L^4}^2 \lVert Q^\tcts_{\sim j} (|\fs|^2\fs) \rVert_{L^\infty}\\
    \lesssim& \epsilon^5 t^{-\frac{9}{10}}
\end{split}\end{equation*}

Finally, we turn to $\rmIV_1$.  Here, we must take advantage of the space-time resonance structure of the nonlinearity. Note that by the definition of $\rho$, $\supp m^{R,1}$ contains no space resonances, so we can freely integrate by parts in frequency.  Doing so, we obtain
\begin{subequations}\begin{align}
    \rmIV_1 =& T_{\nablaes \cdot \mu^{R,1}}(\fs,\fs,\overline{\fs}) \label{eqn:m-R-1-d-1}\\
    &+ tT_{\mu^{R,1}_\eta}(\fs,|\fs|^2\fs,\overline{\fs}) \label{eqn:m-R-1-d-2}\\
    &+\bsim\notag
\end{align}\end{subequations}
where 
\begin{equation*}
    \mu^{R,1} = \frac{\partial_\xi \phimKdV \nablaes \phimKdV m^{R,1}}{|\nablaes \phimKdV|^2}
\end{equation*}
is a vector valued symbol with components $\mu^{R,1}_\eta$ and $\mu^{R,1}_\sigma$. Since $\nablaes \cdot \mu^{R,1}$ obeys the same bounds as $\partial_\xi m^{R,1}$, the term~\eqref{eqn:m-R-1-d-1} can be handled using the same reasoning as for~$\rmII_1$.  Similarly, since $\mu^{R,1}$ has the same support and decay properties as $m^{R,1}$, the bounds for~\eqref{eqn:m-R-1-d-2} follow from the same arguments used to bound~$\rmIII_1$.

\subsection{Estimates and cancellations for the remaining terms}\label{sec:final-ngn}	
The only remaining terms are~\eqref{eqn:tnr-deriv-hits-IP} and \eqref{eqn:m-R-2-d-tnr-f}.  Defining
\begin{equation*}\begin{split}
    T_n(s) =& T_{\mu_s^\cT}(u_n, w_n, \overline{u_n}) + T_{\mu_s^\cT}(u_n, \mSS_n, \overline{w_n}) + T_{\mu_s^\cT}(w_n, \mSS_n, \overline{u_n})\\
    &+ ie^{-\frac{i}{3}\xi^3}T_{\mu_s^\tmkdv}(\fs^+,\fs^+,\overline{\fs^+})(n + 2t) + ie^{-\frac{i}{3}\xi^3}T_{\mu_s^\tmkdv}(\fs^-,\fs^-,\overline{\fs^-})(n - 2t)
\end{split}\end{equation*}
we can use the preceding arguments to write
\begin{subequations}\begin{align}
    \eqref{eqn:tnr-deriv-hits-IP} + \eqref{eqn:m-R-2-d-tnr-f} =& \int_1^t s \Re\left\langle T_n(s), e^{is\Psi} n \partial_s g_n(s)\right\rangle\;ds\notag\\
    =&  \int_1^t s \Im\left\langle T_n(s), |u_n|^2 \Psi Lw_n \right\rangle\;ds\label{eqn:cancel-terms-psiLw}\\
    &+ \int_1^t s^2 \Re\left\langle T_n(s), \tilde{T}_n(s) \right\rangle\;ds\label{eqn:cancel-terms-t-tilde}\\
    &+ \int_1^t s \Re\left\langle T_n(s), H_n(s) \right\rangle\;ds\label{eqn:cancel-terms-easy}
\end{align}\end{subequations}
where
\begin{equation*}\begin{split}
    \tilde{T}_n(s) =& e^{is\Psi}\left(T_{\mu_s^\cT \phi e^{is\phi}}(f_n, g_n, \overline{f_n}) + T_{\mu_s^\cT \phi e^{is\phi}}(f_n, h_n, \overline{g_n}) + T_{\mu_s^\cT \phi e^{is\phi}}(g_n, h_n, \overline{f_n})\right.\\
    &\qquad\qquad\left.+ i T_{\mu_s^\tmkdv \phimKdV e^{is\phimKdV}}(\theta^+,\theta^+,\overline{\theta^+})(n + 2t)\right.\\
    &\qquad\qquad \left.+ i T_{\mu_s^\tmkdv \phimKdV e^{is\phimKdV}}(\theta^-,\theta^-,\overline{\theta^-})(n - 2t)\right)
\end{split}\end{equation*}
represents the terms in $e^{is\Psi} n\partial_s g$ which we handled by integrating by parts in time, and $H_n(s)$ collects the remaining terms.  By our prior arguments, the terms in $H_n(s)$ decay pointwise in time, giving us the bound
\begin{equation*}
    \lVert H_n(s) \rVert_{\ell^2} \lesssim M^2\epsilon^2(1 + \epsilon^2) s^{-1} \lVert ng_n \rVert_{\ell^2} + \epsilon (1 + \epsilon^2) s^{-\frac{9}{10}}
\end{equation*}
By combining~\eqref{eqn:T-mu-s-bd} and~\eqref{eqn:mu-tmkdv-est}, we see that
\begin{equation}\label{eqn:T-cancel-sec-bd}
    \lVert T_n(s) \rVert_{\ell^2} \lesssim M^2\epsilon^2 s^{-1}\lVert ng_n \rVert_{\ell^2} + \epsilon^3 s^{-\frac{9}{10}}
\end{equation}
so, by Cauchy-Schwarz,
\begin{equation*}
    |\eqref{eqn:cancel-terms-easy}| \lesssim M^2 \epsilon^4 t^{1/5} + \int_1^t M^2\epsilon^2 s^{-1} \lVert ng_n \rVert_{\ell^2}^2\;ds
\end{equation*}
which is consistent with~\eqref{eqn:n-gn-desired-est}.  Turning to~\eqref{eqn:cancel-terms-psiLw}, we can use the self-adjointness of $\Psi$ and Cauchy-Schwarz to write
\begin{equation*}
    |\eqref{eqn:cancel-terms-psiLw}| \lesssim \int_1^t s \bigl\lVert \Psi \bigl( |u_n|^2 T_n(s)\bigr) \bigr\rVert_{\ell^2} \lVert ng_n \rVert_{\ell^2}\;ds
\end{equation*}
Dividing dyadically in frequency, we can write
\begin{subequations}\begin{align}
    \left\lVert\Psi \left( |u_n|^2 T_n(s)\right)\right\rVert_{\ell^2} \lesssim& \left\lVert\sum_{j} \left(\sum_{2^k \gtrsim 2^j} 2^j Q_j^+  |Q_{\sim k}^+ u_n|^2 Q_k^+ T_{\mu_s^\cT}(u_n, w_n, \overline{u_n}) \right)\right\rVert_{\ell^2}\label{eqn:cancel-terms-psi-Lw-str}\\
    &+ \left\lVert\sum_{j} \sum_{2^k \gtrsim 2^j} 2^j  Q_j^+ \left( |Q_{\sim k}^+ u_n|^2 Q_k T_{\mu_s^\tmkdv}(\fs^+, \fs^+, \overline{\fs^+}) (n + 2t) \right)\right\rVert_{\ell^2}\label{eqn:cancel-terms-psi-Lw-ss}\\
    &+ \bsime\notag
\end{align}\end{subequations}
For~\eqref{eqn:cancel-terms-psi-Lw-str}, we will focus on the contribution from $\supp \chi_{\sim k}^+$, since the remaining contribution can be handled more easily using the decay estimates for $u_n$ with~\eqref{eqn:T-mu-s-bd}.  We can write
\begin{equation*}\begin{split}
    Q_k^+ T_{\mu_s^\cT}(u_n, w_n, \overline{u_n}) =& \sum_{2^\ell \gtrsim 2^k} T_{m_\ell}(Q_\ell^+ u_n, w_n, \overline{u_n}) + T_{m_\ell}(u_n, Q_\ell^+ w_n, \overline{u_n}) + \bsim
\end{split}\end{equation*}
and a quick calculation shows that
\begin{equation*}
    \lVert \chi_{\sim k}^+ T_{m_\ell}(Q_{\sim \ell}^+ u_n, w_n, \overline{u_n}) \rVert_{\ell^2} + \lVert \chi_{\sim k}^+ T_{m_\ell}(u_n, Q_{\sim \ell}^+ w_n, \overline{u_n}) \rVert_{\ell^2} \lesssim \begin{cases}
        M^2\epsilon^2 s^{-1} \lVert n g_n \rVert_{\ell^2} c_k & |k - \ell| \leq C\\
        M^2\epsilon^2 s^{-4/3} 2^{k - 2\ell} & \ell > k + C
    \end{cases}
\end{equation*}
which gives the estimate
\begin{equation}
    \lVert \chi_{\sim k}^+ Q_k^+ T_{\mu_s^\cT}(u_n, w_n, \overline{u_n}) \rVert_{\ell^2} \lesssim M^2 \epsilon^2 s^{-1} \lVert ng_n \rVert_{\ell^2} c_k
\end{equation}
Combining this estimates with the decay estimates for $Q_{\sim k}^+ u_n$, summing in $\ell$, and using almost orthogonality and pseudolocality, we find that
\begin{equation*}\begin{split}
    \eqref{eqn:cancel-terms-psi-Lw-str}\lesssim& \biggl\lVert\sum_{j} \sum_{2^k \gtrsim 2^j} 2^j \chi_{\sim k}^+ Q_j^+\bigl(  |Q_{\sim k}^+ u_n|^2 Q_k^+ T_{\mu_s^\cT}(u_n, w_n, \overline{u_n})\bigr) \biggr\rVert_{\ell^2} + \bbetter\\
    \lesssim& \left(\sum_{j} \sum_{2^k \gtrsim 2^j} 2^{2j} \lVert Q^+_{\sim k} u_n \rVert_{\ell^\infty}^4 \lVert \chi_{\sim k}^+ T_{\mu_s^\cT}(u_n, w_n, \overline{u_n})\rVert_{\ell^2}^2\right)^{1/2} + \bbetter\\
    \lesssim& M^4 \epsilon^4 s^{-2} \lVert ng_n \rVert_{\ell^2}
\end{split}\end{equation*}
The argument for~\eqref{eqn:cancel-terms-psi-Lw-ss} is similar once we notice that
\begin{equation*}
    \lVert \chi_{\sim k}^\tcts Q_k^\tcts T_{\mu_s^\tmkdv}(\fs^+, \fs^+, \overline{\fs^+}) \rVert_{L^2} \lesssim \begin{cases}
        \epsilon^3 s^{-2/5} & 2^k \sim s^{-1/5}\\
        0 & \text{else}
        \end{cases}
\end{equation*}

The only remaining term is~\eqref{eqn:cancel-terms-t-tilde}.  Observe that
\begin{equation*}
    \partial_s T_n(s) = \tilde{T}_n(s) + O_{\ell^2}\bigl(M^2\epsilon^2 s^{-2} \lVert ng_n \rVert_{\ell^2} + M^2 \epsilon^3 s^{-\frac{19}{10}}\bigr)
\end{equation*}
Thus, using~\eqref{eqn:T-cancel-sec-bd}, we see that
\begin{equation*}\begin{split}
    \frac{1}{2}\partial_s \lVert T_n(s) \rVert_{\ell^2}^2 =& \Re \langle T_n(s), \tilde{T}_n(s) \rangle + O\bigl(M^4\epsilon^4 s^{-3} \lVert ng_n \rVert_{\ell^2} + M^2 \epsilon^6 s^{-\frac{9}{5}} \bigr)
\end{split}\end{equation*}
which gives us the bound
\begin{equation*}\begin{split}
    |\eqref{eqn:cancel-terms-t-tilde}| \lesssim& \int_1^t \frac{s^2}{2} \partial_s \lVert T_n(s) \rVert_{\ell^2}^2\;ds  + \int_1^t M^4\epsilon^4 s^{-1} \lVert ng_n \rVert_{\ell^2}^2\;ds + M^2\epsilon^6 t^{1/5}\\
    \lesssim& \left. s^2 \lVert T_n(s) \rVert_{\ell^2}^2 \right|_{s = 1}^{s=t} + \int_1^t s \lVert T_n(s) \rVert_{\ell^2}^2\;ds + \int_1^t M^4\epsilon^4 s^{-1} \lVert ng_n \rVert_{\ell^2}^2\;ds + M^2\epsilon^6 t^{1/5}\\
    \lesssim& M^2\epsilon^2 \lVert ng_n(t) \rVert_{\ell^2}^2 + M^2 \epsilon^4 t^{1/5} + \int_1^t M^2\epsilon^2 s^{-1} \lVert ng_n\rVert_{\ell^2}^2\;ds
\end{split}\end{equation*}
This completes the proof of~\eqref{eqn:n-gn-desired-est}.

\section{Decay at the degenerate frequencies\label{sec:degen-freq-decay}}

We now consider the problem of estimating $\partial_t \alpha_{\pm}$.  For notational simplicity, we will focus on the estimate for $\partial_t \alpha_+$, since the estimate for $\partial_t \alpha_{-}$ is similar.  We begin by observing that $\partial_t \alpha_+$ can be written as a sum:
\begin{equation}\label{eqn:dt-alpha-+-sum}
    \partial_t \alpha_+ = \sum_{n \in \bbZ} e^{-i\frac{\pi}{2}n} |u_n|^2 \Psi u_{n}
\end{equation}
\subsection{Reduction to estimates involving \texorpdfstring{$w_n$}{w\_n}}
As discussed in the introduction, the linear decay estimates are not sufficient to bound~\eqref{eqn:dt-alpha-+-sum}.  Instead, we will separate out the contribution from the approximately self-similar solution to write
\begin{subequations}\begin{align}
    \partial_t \alpha_+ =& \sum_{n \in \bbZ} e^{-i\frac{\pi}{2}n} |u_n|^2 \Psi w_{n}\label{eqn:dt-a-Psi-w}\\
    &+ \sum_{n \in \bbZ} e^{-i\frac{\pi}{2}n} 2\Re(u_n \overline{w}_n)\Psi \mSS_{n}\label{eqn:dt-a-Psi-SS}\\
    &+ \sum_{n \in \bbZ}e^{-i\frac{\pi}{2}n} |\mSS_n|^2\Psi \mSS_{n} \label{eqn:dt-a-all-SS}
\end{align}\end{subequations}
We begin by considering the third term.  By using Fourier inversion, we find that
\begin{subequations}\begin{align}
    \eqref{eqn:dt-a-all-SS} 
    =& 2 e^{i\pi t}\iint_{\bbR^2} \SSCutoff[\eta]\hat{\fs}(-\eta) \SSCutoff[\sigma]\hat{\fs}(-\sigma) \SSCutoff[(\eta + \sigma)]\overline{\hat{\fs}}(-\eta-\sigma) \sin(\eta + \sigma) \;d\eta d\sigma\nonumber\\
    =& -2e^{i\pi t} \iint_{\bbR^2} \SSCutoff[\eta]\hat{\fs}(\eta) \SSCutoff[\sigma]\hat{\fs}(\sigma) \SSCutoff[(\eta + \sigma)]\overline{\hat{\fs}}(\eta+\sigma) (\eta + \sigma) \;d\eta d\sigma \label{eqn:degen-freq-III-cancel}\\
    &-2e^{i\pi t} \iint_{\bbR^2} \SSCutoff[\eta]\hat{\fs}(\eta) \SSCutoff[\sigma]\hat{\fs}(\sigma) \SSCutoff[(\eta+\sigma)]\overline{\hat{\fs}}(\eta+\sigma) (\sin(\eta + \sigma) - (\eta + \sigma)) \;d\eta d\sigma \label{eqn:degen-freq-III-small}
\end{align}\end{subequations}
For the first term~\eqref{eqn:degen-freq-III-cancel}, using Fourier inversion and the fact that $\SSCutoff[D]\fs$ has constant argument, we find
\begin{equation*}\begin{split}
    \eqref{eqn:degen-freq-III-cancel}  =& -i\int (\SSCutoff[D] \fs(x))^2 \SSCutoff[D] \partial_x \overline{\fs}(x)\;dx\\
    =& -i\int \partial_x\left((\SSCutoff[D] \fs(x))^2 \SSCutoff[D] \overline{\fs}(x)\right)\;dx\\
    =& 0
\end{split}\end{equation*}
Thus,
\begin{equation}\begin{split}
    |\eqref{eqn:dt-a-all-SS}| =& \left|\eqref{eqn:degen-freq-III-small}\right|\\
    \lesssim& \lVert (\SSCutoff[D] \fs(x))^2 (\SSCutoff[D](\sin(D) - D) \overline{\fs}(-x)) \rVert_{L^1}\\
    \lesssim& \lVert \SSCutoff[D] \fs(x) \rVert_{L^2} \sum_{2^j \lesssim t^{-1/5}} 2^{3j} \lVert P_j \fs \rVert_{L^4}^2\\
    \lesssim& \epsilon^3 t^{-6/5}
\end{split}\end{equation}

\subsection{Estimates involving \texorpdfstring{$w_n$}{w\_n}}

We now turn to estimating~\eqref{eqn:dt-a-Psi-w} (the argument for~\eqref{eqn:dt-a-Psi-SS} is similar).  By inverting the Fourier transform, we find that
\begin{equation*}
    \eqref{eqn:dt-a-Psi-w} = \iint m(\eta, \sigma) e^{it\phi} \hat{g}(\fhPi-\eta-\sigma,t) \hat{f}(\eta) \overline{\hat{f}}(-\sigma) \;d\eta d\sigma
\end{equation*}
with
\begin{equation*}
    m(\eta,\sigma) = \cos\left(\fhPi - \eta - \sigma\right)
\end{equation*}
The main contribution to $\rmI$ comes from the three points where $\nabla \phi$ vanishes.  These points are given by
\begin{equation*}\begin{split}
    p_1 = (\eta_1,\sigma_1) = \left(\fhPi, -\fhPi\right)\\
    p_2 = (\eta_2,\sigma_2) = \left(-\frac{5\pi}{6}, \frac{\pi}{6}\right)\\
    p_3 = (\eta_3,\sigma_3) = \left(-\frac{\pi}{6}, \frac{5\pi}{6}\right)
\end{split}\end{equation*}
Notice that the symbol $m$ vanishes at all of these points.  At the points $p_2$ and $p_3$, $\phi$ has a non-degenerate Hessian, so we can perform the usual quadratic stationary phase approximate.  At $p_1$, the Hessian of $\phi$ vanishes, which produces slower decay. However, this is compensated by the fact that $\hat{g}\left(\fhPi - \eta - \sigma\right)$ also vanishes at $p_1$, giving us an integrable decay rate.

We now rewrite~\eqref{eqn:dt-a-Psi-w} in a form that isolates contribution from each stationary point.  Let $\chi_r(\eta,\sigma)$, $r=1,2,3$ be smooth functions with disjoint support such that $\chi_r$ is equal to $1$ in some neighborhood of $p_r$ and vanishes in a neighborhood of $p_m$ for each $m \neq r$.  Then, we can write 
\begin{subequations}\begin{align}
    \eqref{eqn:dt-a-Psi-w} =& \iint m(\eta,\sigma) \chi_1(\eta,\sigma) e^{is\phi} \hat{g}(\fhPi-\eta-\sigma,s) \hat{f}(\eta,s) \overline{\hat{f}}(-\sigma,s) \;d\eta d\sigma\label{eqn:p1-st-pt}\\
    &+ \iint m(\eta,\sigma) \chi_2(\eta,\sigma) e^{is\phi} \hat{g}(\fhPi-\eta-\sigma,s) \hat{f}(\eta,s) \overline{\hat{f}}(-\sigma,s) \;d\eta d\sigma\label{eqn:p2-st-pt}\\
    &+ \iint m(\eta,\sigma) \chi_3(\eta,\sigma) e^{is\phi} \hat{g}(\fhPi-\eta-\sigma,s) \hat{f}(\eta,s) \overline{\hat{f}}(-\sigma,s) \;d\eta d\sigma\label{eqn:p3-st-pt}\\
    &+ \iint m(\eta,\sigma) \bigl(1-\sum_{r=1}^3\chi_3(\eta,\sigma)\bigr) e^{is\phi} \hat{g}(\fhPi-\eta-\sigma,s) \hat{f}(\eta,s) \overline{\hat{f}}(-\sigma,s) \;d\eta d\sigma\label{eqn:nonstat-rgn}
\end{align}\end{subequations}
We will show how to control~\cref{eqn:p1-st-pt,eqn:p2-st-pt,eqn:p3-st-pt}: the contribution from~\eqref{eqn:nonstat-rgn} is more easily controlled, since the phase is nonstationary.

\subsubsection{The contribution from the quadratic stationary points}

We first show how to control~\eqref{eqn:p2-st-pt} (with a similar argument giving us the estimate for~\eqref{eqn:p3-st-pt}).  Let us write $\chi_{2} = \sum_{\ell_1,\ell_2} \chi_{2,\ell_1,\ell_2}$, where
\begin{equation*}
    \chi_{2,\ell_1,\ell_2}(\eta,\sigma) = \chi_2(\eta,\sigma) \tilde{\psi}^{[s^{-4/9}]}_{\ell_1}(\eta - \eta_2) \tilde{\psi}^{[s^{-4/9}]}_{\ell_1}(\sigma - \sigma_2)
\end{equation*}
and let $\ell_0 = \ell_0(s)$ be such that $2^{\ell_0} < s^{-4/9} \leq 2^{\ell_0+1}$.  Then, we can further decompose~\eqref{eqn:p2-st-pt} as
\begin{subequations}\begin{align}
    \eqref{eqn:p2-st-pt} =&  \iint m(\eta,\sigma) \chi_{2,\ell_0,\ell_0}(\eta,\sigma) e^{is\phi} \hat{g}(\fhPi-\eta-\sigma,s) \hat{f}(\eta,s) \overline{\hat{f}}(-\sigma,s) \;d\eta d\sigma\label{eqn:p2-st-pt-lo}\\
    &+ \sum_{\substack{\ell_1 > \min\{\ell_0+1, \ell_2\}}}\iint m(\eta,\sigma) \chi_{2,\ell_1,\ell_2}(\eta,\sigma) e^{is\phi} \hat{g}(\fhPi-\eta-\sigma,s) \hat{f}(\eta,s) \overline{\hat{f}}(-\sigma,s) \;d\eta d\sigma\label{eqn:p2-st-pt-hi}\\
    &+ \bsim\notag
\end{align}\end{subequations}
For~\eqref{eqn:p2-st-pt-lo}, a simple volume bound combined with the smallness of $m$ over the support of $\chi_{2,\ell_0,\ell_0}$ yields
\begin{equation*}
    |\eqref{eqn:p2-st-pt-lo}| \lesssim s^{-4/3} \lVert \hat{f} \rVert_{L^\infty}^2\lVert \hat{g} \rVert_{L^\infty} \lesssim M^2\epsilon^3 s^{-\frac{37}{30}} 
\end{equation*}
where to obtain the final estimate, we have used the fact that 
$$\lVert \hat{g} \rVert_{L^\infty} \lesssim \lVert ng_n \rVert_{L^2} \lesssim \epsilon t^{\frac{1}{10}}$$
with the last inequality coming from the work in~\Cref{sec:n-ell2}.  Turning to~\eqref{eqn:p2-st-pt-hi}, we integrate by parts to obtain
\begin{subequations}\begin{align}
    \eqref{eqn:p2-st-pt-hi} =& \sum_{\substack{\ell_1 > \min\{\ell_0+1, \ell_2\}}} \frac{i}{s} \iint \deles \cdot \Bigl(\deles \phi \frac{m \chi_{2,\ell_1,\ell_2}}{|\deles \phi|^2}\Bigr) e^{is\phi} \hat{g}(\fhPi-\eta-\sigma) \hat{f}(\eta) \overline{\hat{f}}(-\sigma) \;d\eta d\sigma \label{eqn:p2-st-pt-hi-1}\\
    &+ \sum_{\substack{\ell_1 > \min\{\ell_0+1, \ell_2\}}}  \frac{i}{s} \iint \partial_\eta \phi \frac{m \chi_{2,\ell_1,\ell_2}}{|\deles \phi|^2} e^{is\phi} \hat{g}(\fhPi-\eta-\sigma) \partial_\eta \hat{f}(\eta) \overline{\hat{f}}(-\sigma) \;d\eta d\sigma \label{eqn:p2-st-pt-hi-2}\\
    &- \sum_{\substack{\ell_1 > \min\{\ell_0+1, \ell_2\}}}  \frac{i}{s} \iint \partial_\eta \phi \frac{m \chi_{2,\ell_1,\ell_2}}{|\deles \phi|^2} e^{is\phi} \partial_\eta \hat{g}(\fhPi-\eta-\sigma) \partial_\eta \hat{f}(\eta) \overline{\hat{f}}(-\sigma) \;d\eta d\sigma \label{eqn:p2-st-pt-hi-3}\\
    &+ \bsim \notag
\end{align}
\end{subequations}
Let us write
\begin{equation*}\begin{split}
    m^1_{2,\ell_1,\ell_2} = \psi^+_{\leq \ell_2}(\xi) \deles \cdot \Bigl(\deles \phi \frac{m \chi_{2,\ell_1,\ell_2}}{|\deles \phi|^2}\Bigr), \qquad\qquad
    m^2_{2,\ell_1, \ell_2} = \psi^+_{\leq \ell_2}(\xi) \partial_\eta \phi \frac{m \chi_{2,\ell_1,\ell_2}}{|\deles \phi|^2}
\end{split}\end{equation*}
The symbols $2^{\ell_1} m^1_{2,\ell_1,\ell_2}$ and $m^2_{2,\ell_1, \ell_2}$ satisfy the hypotheses in~\Cref{rmk:freq-loc-symbol-bds}, so by Hausdorff-Young,
\begin{equation*}\begin{split}
    |\eqref{eqn:p2-st-pt-hi}| \leq& |\eqref{eqn:p2-st-pt-hi-1}| + |\eqref{eqn:p2-st-pt-hi-2}| + |\eqref{eqn:p2-st-pt-hi-3}| + \bsime\\
    \leq& \sum_{\substack{\ell_1 > \min\{\ell_0+1, \ell_2\}}} s^{-1}\bigl\lVert T_{m^1_{\ell_1,\ell_2}}(w_n, \psi_{\ell_1}(D - \eta_2) u_n,\overline{\psi_{\ell_2}^{[s^{-4/9}]}(D - \sigma_2)u_n})\bigr\rVert_{\ell^1}\\
    &+ \sum_{\substack{\ell_1 > \min\{\ell_0+1, \ell_2\}}} s^{-1}\bigl\lVert T_{m^1_{\ell_1,\ell_2}}(w_n, L u_n,\overline{\psi_{\ell_2}^{[s^{-4/9}]}(D - \sigma_2)u_n})\bigr\rVert_{\ell^1}\\
    +& \sum_{\substack{\ell_1 > \min\{\ell_0+1, \ell_2\}}} s^{-1}\bigl\lVert T_{m^1_{\ell_1,\ell_2}}(Lw_n, (Q^{+}_{\gg - 100} + Q^-_{\gg -100}) u_n,\overline{\psi_{\ell_2}^{[s^{-4/9}]}(D - \sigma_2)u_n})\bigr\rVert_{\ell^1}\\
    \lesssim& M^2\epsilon^3 s^{-\frac{37}{30}}\log s
\end{split}\end{equation*}

\subsubsection{The contribution from the degenerate stationary point}

The argument for~\eqref{eqn:p1-st-pt} is similar to the one for~\eqref{eqn:p2-st-pt}.  Introducing the cut-off functions
    \begin{equation*}
    \chi_{3,\ell_1,\ell_2}(\eta,\sigma) = \chi_2(\eta,\sigma) \tilde{\psi}^{[s^{-1/3}]}_{\ell_1}(\eta - \eta_2) \tilde{\psi}^{[s^{-1/3}]}_{\ell_1}(\sigma - \sigma_2)
\end{equation*}
we divide in frequency, writing
\begin{subequations}\begin{align}
    \eqref{eqn:p1-st-pt} =& \iint \mu(\eta,\sigma) \chi_{3,\ell_0,\ell_0}  e^{is\phi} \hat{g}(\fhPi-\eta-\sigma,s) \hat{f}(\eta,s) \overline{\hat{f}}(-\sigma,s) \;d\eta d\sigma\label{eqn:p1-st-pt-lo}\\
    &+ \sum_{\ell_1 > \max\{\ell_1, \ell_0\}} \iint \mu(\eta,\sigma) \chi_{3,\ell_1,\ell_2}  e^{is\phi} \hat{g}(\fhPi-\eta-\sigma,s) \hat{f}(\eta,s) \overline{\hat{f}}(-\sigma,s) \;d\eta d\sigma\label{eqn:p1-st-pt-hi}\\
    &+\bsim\notag
\end{align}\end{subequations}
where $\ell_0 = \ell_0(s)$ satisfies $2^{\ell_0} < s^{-1/3} \leq 2^{\ell_0+1}$.  Noting that \begin{equation*}
    \hat{g}(\fhPi -\eta+\sigma,s)| \lesssim |\eta + \sigma|^{1/2} \lVert ng_n \rVert_{\ell^2} \lesssim \epsilon s^{\frac{1}{10}}|\eta - \sigma|^{1/2}
\end{equation*}
a simple volume bound gives
\begin{equation*}
    |\eqref{eqn:p1-st-pt-lo}| \lesssim M^2\epsilon^3s^{-\frac{16}{15}}
\end{equation*}
Turning to~\eqref{eqn:p1-st-pt-hi}, an integration by parts yields
\begin{equation*}\begin{split}
    \eqref{eqn:p1-st-pt-hi} =& \sum_{\ell_1 > \max\{\ell_1, \ell_0\}}  \frac{i}{s} \hat{T}_{m^1_{\ell_1\ell_2}}(Q_{\sim \ell_1}^{+} u_n,Q_{\lesssim \ell_1}^{+}w_n,\overline{Q_{\sim \ell_2}^{+}u_n})(\fhPi) \\
    &- \sum_{\ell_1 > \max\{\ell_1, \ell_0\}} \frac{i}{s} \hat{T}_{m^2_{\ell_1\ell_2}}(\invLinProp[s] nf_n,Q_{\lesssim \ell_1}^{+}w_n,\overline{Q_{\sim \ell_2}^{+}u_n})(\fhPi)\\
    &- \sum_{\ell_1 > \max\{\ell_1, \ell_0\}}  \frac{i}{s} \hat{T}_{m^2_{\ell_1\ell_2}}(Q_{\sim \ell_1}^{+} u,Lw_n,\overline{Q_{\sim \ell_2}^{+}u})(\fhPi)\\
\end{split}\end{equation*}
where
\begin{equation*}\begin{split}
    m^1_{1,\ell_1 \ell_2}(\xi,\eta,\sigma) =& \psi^{[s^{-1/3}]}_{\lesssim \ell_1}(\xi) \deles \cdot \frac{m\chi_{1,\ell_1, \ell_2} \deles \phi}{|\deles \phi|^2}\\
    m^2_{1,\ell_1 \ell_2}(\xi,\eta,\sigma) =& \psi^{[s^{-1/3}]}_{\lesssim \ell_1}(\xi)\frac{m\chi_{1,\ell_1, \ell_2} \partial_\eta \phi}{|\deles \phi|^2}
\end{split}\end{equation*}
A quick calculation shows that $2^{2\ell_1} m^1_{1,\ell_1, \ell_2}$ and $2^{\ell_1} m^2_{1,\ell_1, \ell_2}$ satisfy the conditions given in~\Cref{rmk:freq-loc-symbol-bds}, so by the Hausdorff-Young inequality,
\begin{equation*}\begin{split}
    \left|\frac{i}{s}\hat{T}_{m^1_{\ell_1\ell_2}}(Q_{\sim \ell_1}^{+} u_n,Q_{\lesssim \ell_1}^{+}w_n,\overline{Q_{\sim \ell_2}^{-}u_n})(\fhPi) \right| \lesssim& s^{-1} 2^{-2\ell_1} \lVert Q_{\sim \ell_1}^{+} u_n \rVert_{\ell^\infty} \lVert Q_{\lesssim \ell_1}^{+}w_n \rVert_{\ell^2} \lVert Q_{\ell_2}^{-} u_n \rVert_{\ell^2}\\
    \lesssim& M^2 \epsilon^3 s^{-7/5} 2^{-3/2 \ell_1 + \ell_2/2}\\
    \left|\frac{i}{s}\hat{T}_{m^2_{\ell_1\ell_2}}(\invLinProp[s] n f_n,Q_{\lesssim \ell_1}^{+}w_n,\overline{Q_{\sim \ell_2}^{-}u_n})(\fhPi) \right| \lesssim& s^{-1} 2^{-\ell_1} \lVert nf_n \rVert_{\ell^2} \lVert w_n \rVert_{\ell^\infty} \lVert Q_{\ell_2}^{-} u_n \rVert_{\ell^2}\\
    \lesssim& M^2 \epsilon^3 s^{-\frac{37}{30}} 2^{-\ell_1 + \ell_2/2}\\
    \left|\frac{i}{s}\hat{T}_{m^2_{\ell_1\ell_2}}(Q^+_{\sim \ell_1} u_n,Lw_n,\overline{Q_{\sim \ell_2}^{-}u_n})(\fhPi) \right| \lesssim& s^{-1} 2^{-\ell_1} \lVert ng_n \rVert_{\ell^2} \lVert Q^+_{\sim \ell_1} u_n \rVert_{\ell^\infty} \lVert Q_{\ell_2}^{-} u_n \rVert_{\ell^2}\\
    \lesssim& M^2 \epsilon^3 s^{-7/5} 2^{-3/2\ell_1 + \ell_2/2}
\end{split}\end{equation*}
Summing, we find that
\begin{equation*}
    |\eqref{eqn:p1-st-pt-hi}| \lesssim M^2\epsilon^3 s^{-\frac{16}{15}}
\end{equation*}

\section{The stationary phase estimate\label{sec:L-infty}}
We now consider the behavior of $\hat f(t)$.  There are two regions of interest: the region $|\xi \mp \fhPi| \lesssim t^{-1/3}$ where $\xi$ is near the degenerate frequencies $\pm \fhPi$, and the complementary region $|\xi \mp \fhPi| \gtrsim t^{-1/3}$.

\subsection{Estimates for the degenerate frequency regions}
Suppose $\left|\xi-\fhPi\right| \lesssim t^{-1/3}$.  Based on the estimates in~\eqref{sec:degen-freq-decay}, we see that
\begin{equation*}
    |\hat{f}(\fhPi, t)| \leq |\hat{u_*}(\fhPi)| + \int_1^t |\partial_t \alpha_+(s)|\;ds \lesssim \epsilon
\end{equation*}
so it suffices to control $|\hat{f}(\xi,t) - \hat{f}(\fhPi,t)|$.  But, the Sobolev-Morrey embedding gives us the bound
\begin{equation*}
    |\hat{f}(\xi,t) - \hat{f}(\eta,t)| \lesssim \lVert nf_n \rVert_{\ell^2} |\xi - \fhPi|^{1/2} \lesssim \epsilon t^{1/6} |\xi - \fhPi|^{1/2}
\end{equation*}
which shows that $|\hat{f}(\xi,t)|$ is bounded for $|\xi - \fhPi| \lesssim t^{-1/3}$.

\subsection{The perturbed Hamiltonian dynamics}

  Symmetrizing~\eqref{eqn:dt-f-hat} in $\eta$ and $\xi - \eta - \sigma$, we find that
\begin{equation}\label{eqn:f-hat-symmetrized-eqn}
	\partial_t \hat{f}(\xi_0,t) = \frac{i}{4\pi} \iint \mu(\xi_0,\eta,\sigma) e^{it\phi} \hat{f}(\eta,t) \hat{f}(\xi_0 - \eta - \sigma, t) \overline{\hat{f}}(-\sigma, t)\;d\eta d\sigma
\end{equation}
where $\mu(\xi, \eta, \sigma) = \left(\cos \eta + \cos(\xi - \eta - \sigma)\right)$
For fixed $\xi_0 \neq \pm \frac{\pi}{2}$, $\phi(\xi_0, \cdot, \cdot)$ has six stationary points.  Two of these points can be neglected because they occur where the null-form $\mu$ also vanishes.  The remaining four points $P_j = (\eta_j, \sigma_j)$, $j=1,2,3,4$ are given in~\Cref{tab:stationary-point-data}.  Observe that the pairs of points $\{P_1,P_4\}$ and $\{P_1,P_3\}$ are separated by a distance $\bigO{\xi_0 \mp \fhPi}$, while all other pairs of stationary points are uniformly separated in $\xi_0$.  A sketch of the situation is given in~\Cref{fig:st-pt-pic}.

\begin{table}[b]
\begin{tabular}{|c|c|c|c|c|}
    \hline
     $P_j$ &  $(\eta_j,\sigma_j)$ & $\phi_j = \phi(\xi,\eta_j,\sigma_j)$ & $D_j = \det(\Hess \phi)$ & $S_j = \sgn(\Hess \phi)$\\
     \hline\hline
     $P_1$ & $(\xi_0, -\xi_0)$ & $0$ & $-4\cos^2 \xi_0$ & 0\\\hline
     $P_2$ & $\left(\pi +\frac{\xi_0}{3}, \frac{\xi_0}{3}\right)$ & $-8 \cos^3(\xi_0/3)$ & $12 \cos^2 \frac{\xi_0}{3}$ &$-\sgn(\cos \frac{\xi}{3})$\\\hline
     $P_3$ & $\left(\frac{\xi_0 - \pi}{3}, \frac{\xi_0 + 2\pi}{3}\right)$ & $\phantom{-}8 \cos^3\left(\frac{\pi - \xi_0}{3}\right)$ & $12 \cos^2 \left(\frac{\xi_0 -\pi}{3}\right)$ & $\sgn\left(\cos \left(\frac{\xi_0 - \pi}{3}\right)\right)$\\\hline
     $P_4$ & $\left(\frac{\xi_0 + \pi}{3}, \frac{\xi_0 - 2\pi}{3}\right)$ & $-8 \cos^3\left(\frac{\xi_0 + \pi}{3}\right)$ & $12 \cos^2\left(\frac{\xi_0 + \pi}{3}\right)$ &$\sgn\left(\cos \left(\frac{\xi_0 + \pi}{3}\right)\right)$\\\hline
\end{tabular}
\caption[Function values at the stationary points]{\label{tab:stationary-point-data} The four stationary points of $\deles \phi$ which are not canceled by the null-form, together with data about the phase there.}
\end{table} 

\begin{figure}
    \def\piconst{12}
    \def\picxi{7}
    \begin{center}
    \begin{tikzpicture}[scale=0.3]
        \draw[loosely dashed](-0.5 * \piconst,-\piconst)--(-0.5*\piconst,\piconst);
        \draw[loosely dashed](0.5 * \piconst,-\piconst)--(0.5*\piconst,\piconst);
        \draw[loosely dashed](- \piconst,-0.5 *\piconst)--(\piconst,-0.5*\piconst);
        \draw[loosely dashed]( -\piconst,0.5*\piconst)--(\piconst,0.5*\piconst);
        \draw[fill=black] (\picxi,-\picxi) circle (0.1em) node[anchor=north west]{$P_1$};
        \draw[fill=black] (-\piconst + \picxi / 3,\picxi / 3) circle (0.1em) node[anchor=north west]{$P_2$};
        \draw[fill=black] (-\piconst/3 + \picxi / 3,\picxi / 3 + 2* \piconst /3) circle (0.1em) node[anchor=north west]{$P_3$};
        \draw[fill=black] (\piconst/3 + \picxi / 3,\picxi / 3 - 2* \piconst /3) circle (0.1em) node[anchor=south west]{$P_4$};
    \end{tikzpicture}
    \end{center}
    \caption[The stationary points of $\phi$ for fixed $\xi$]{\label{fig:st-pt-pic} The four stationary points for $\xi_0 \approx \fhPi$.  The dotted lines are $\eta,\sigma = \pm \fhPi$.}
\end{figure}
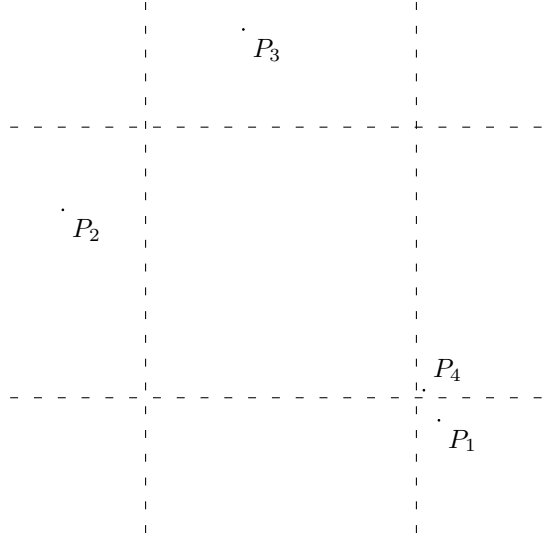

We now claim that
\begin{equation}\label{eqn:d-t-f-hat-st-ph-form}\begin{split}
    \partial_t \hat{f} =& \frac{i\sgn(\cos \xi_0)}{4t} |\hat{f}(\xi_0)|^2 \hat{f}(\xi_0) + \sum_{j=2}^4 \frac{c_j(\xi_0)}{t} e^{it\phi_j} \hat{f}(\eta_j) \hat{f}(\xi_0 - \eta_j - \sigma_j) \overline{\hat{f}(-\sigma_j)}\\
    &\qquad+ O\left(M^3\epsilon^3 t^{-1}(|\cos(\xi_0)| t^{1/3})^{-\stPhErr}\right)
\end{split}\end{equation}
where $c_j(\xi_0) = i \frac{\cos(\eta_j)}{2|D_j|^{1/2}}e^{i\frac{\pi}{4}S_j}$ for $D_j$, $S_j$ given in~\Cref{tab:stationary-point-data}.  We now show that the stationary phase expansion~\eqref{eqn:d-t-f-hat-st-ph-form} is enough to prove that $\hat{f}$ is bounded in time and evolves by logarithmic phase rotation.  Defining
\begin{equation}\label{eqn:B-defn}
    B(t,\xi_0) := -\frac{1}{4} \int_{1}^t \frac{|\hat f(\xi_0,s)|^2}{s}\;ds
\end{equation}
and letting $v = \exp(iB)\hat{f}$, and integrating in time, we find that
\begin{equation*}
    v(t_2) - v(t_1) = \sum_{j=2}^4 \int_{t_1}^{t_2} \frac{c_j(\xi_0)}{s} e^{is\phi_j} e^{iB} \hat{f}(\eta_j) \hat{f}(\xi_0 - \eta_j - \sigma_j) \overline{\hat{f}(-\sigma_j)} \;ds  + O\left(M^3\epsilon^3 (\cos(\xi_0) t_1^{1/2})^{-\stPhErr}\right)
\end{equation*}
Integrating by parts in $s$ and using~\cref{eqn:d-t-f-hat-st-ph-form,eqn:B-defn} to express the derivatives yields
\begin{equation*}\begin{split}
    |v(t_2) - v(t_1)| \lesssim& \sum_{j=2}^4\Biggl. \frac{\bigl| \hat{f}(\eta_j) \hat{f}(\xi_0 - \eta_j - \sigma_j) \overline{\hat{f}(-\sigma_j)} \bigr|}{s|\phi_j|} \Biggr|_{s = t_1}^{s = t_2}+ \int_{t_1}^{t_2} \frac{\left| \hat{f}(\eta_j) \hat{f}(\xi_0 - \eta_j - \sigma_j) \overline{\hat{f}(-\sigma_j)} \right|}{|\phi_j|s^2}  ds\\
    &\qquad+ \int_{t_1}^{t_2} \frac{\bigl|\partial_s B \hat{f}(\eta_j) \hat{f}(\xi_0 - \eta_j - \sigma_j) \overline{\hat{f}(-\sigma_j)} \bigr|}{s |\phi_j|}   \; ds \\
    &+ \int_{t_1}^{t_2} \frac{\bigl| \partial_s \hat{f}(\eta_j) \hat{f}(\xi_0 - \eta_j - \sigma_j) \overline{\hat{f}(-\sigma_j)} \bigr|}{s |\phi_j|} ds +\bsim \\
    &+ O\bigl(M^3\epsilon^3 (\cos(\xi_0) t_1^{1/2})^{-\stPhErr}\bigr)\\
    \lesssim& M^3\epsilon^3 (\cos(\xi_0) t_1^{1/2})^{-\stPhErr}
\end{split}\end{equation*}
It follows that $\lim_{t \to \infty} v(t) =: f_\infty(\xi)$ exists, and that
\begin{equation*}
    \hat{f}(\xi,t) = \exp(-iB) f_{\infty}(\xi) + O\left(M^3\epsilon^3 (\cos(\xi_0) t^{1/3})^{-\stPhErr}\right)
\end{equation*}

\subsection{The stationary phase estimates}

In this section, we will prove the stationary phase estimate~\eqref{eqn:d-t-f-hat-st-ph-form}.  Let $|\cos \xi_0| \sim 2^k$.  We will show how to obtain the estimates for $\sin \xi_0 \geq 0$, $k \leq -20$; that is, for $|\xi_0 - \fhPi| \ll 1$.  The estimates $|\xi_0 + \fhPi| \ll 1$ are similar once we interchange the roles of $P_3$ and $P_4$, and the estimates for $\xi_0$ bounded away from $\pm \fhPi$ are easier because the distances between stationary points are bounded below.  Based on the frequency space geometry sketched in~\Cref{fig:st-pt-pic}, we write
\begin{equation*}
    \partial_t \hat{f}(\xi_0) = \rmI^{(1,4)} + \rmI^{(2)} + \rmI^{(3)} + R
\end{equation*}
where
\begin{equation*}\begin{split}
    \rmI^{(1,4)} =& \frac{i}{4\pi} \iint  \psi_{\leq -10}(\eta - \fhPi) \psi_{\leq -10}(\sigma + \fhPi) e^{it\phi} F(\xi_0,\eta,\sigma)\;d\eta d\sigma\\
    \rmI^{(2)} =& \frac{i}{4\pi} \iint \psi_{\leq -10}(\eta - \eta_2) \psi_{\leq -10}(\sigma -\sigma_2) e^{it\phi} F(\xi_0,\eta,\sigma)\;d\eta d\sigma\\
    \rmI^{(3)} =& \frac{i}{4\pi} \iint \psi_{\leq -10}(\eta - \eta_3) \psi_{\leq -10}(\sigma -\sigma_3) e^{it\phi} F(\xi_0,\eta,\sigma)\;d\eta d\sigma\\
    R =& \partial_t \hat{f}(\xi_0) - \rmI^{(1,4)} - \rmI^{(2)} - \rmI^{(3)}
\end{split}\end{equation*}
where, for brevity, we have introduced the notation
\begin{equation*}
    F(\xi, \eta, \sigma) = \mu(\xi, \eta,\sigma) \hat{f}(\eta) \hat{f}(\xi - \eta + \sigma) \overline{\hat{f}(-\sigma)}
\end{equation*}
Here $I^{(2)}$ and $I^{(3)}$ contain the contribution from near the stationary points $P_2$ and $P_3$ (respectively), $I^{(1,4)}$ contains the contribution from the region near $P_1$ and $P_4$, and $R$ is the remainder.  We claim that
\begin{equation}\label{eqn:st-ph-I14}\begin{split}
    \rmI^{(1,4)} =& \frac{i \sgn(\cos \xi)}{4t}|\hat{f}(\xi_0)|^2\hat{f}(\xi_0) + \frac{c_4(\xi)}{t}e^{it\phi_3}\hat{f}(\eta_4)\hat{f}(\xi_0 - \eta_4 - \sigma_4) \overline{\hat{f}(-\sigma_4)} \\
    &\qquad + \bigO{M^3\epsilon^3 t^{-1} \left(t^{1/3} 2^k\right)^{-\stPhErr}}
\end{split}\end{equation}
\begin{equation}\label{eqn:st-ph-I2}
    \rmI^{(2)} = \frac{c_2(\xi_0)}{t}e^{it\phi_2}\hat{f}(\eta_2)\hat{f}(\xi_0 - \eta_2 - \sigma_2) \overline{\hat{f}(-\sigma)} + \bigO{M^3\epsilon^3 t^{-1} \left(t^{1/3} 2^k\right)^{-\stPhErr}}
\end{equation}
\begin{equation}\label{eqn:st-ph-I3}
    \rmI^{(3)} = \frac{c_3(\xi_0)}{t}e^{it\phi_2}\hat{f}(\eta_2)\hat{f}(\xi_0 - \eta_2 - \sigma_2) \overline{\hat{f}(-\sigma)} + \bigO{M^3\epsilon^3 t^{-1} \left(t^{1/3} 2^k\right)^{-\stPhErr}}
\end{equation}
\begin{equation}\label{eqn:st-ph-R}
    R = \bigO{M^3\epsilon^3 t^{-1} \left(t^{1/3} 2^k\right)^{-\stPhErr}}
\end{equation}
which, taken together, give the estimate~\eqref{eqn:d-t-f-hat-st-ph-form}.  The estimate for $\rmI^{(1,4)}$ is the most challenging, since it contains the two stationary points $P_1$ and $P_4$ which coalesce as $\xi_0 \to \fhPi$.  Thus, we will focus on proving~\eqref{eqn:st-ph-I14}, as \cref{eqn:st-ph-I2,eqn:st-ph-I3,eqn:st-ph-R} are easier.  We introduce the decomposition
\begin{equation*}\begin{split}
    \rmI^{(1,4)} =& \frac{i}{4\pi} \iint \psi_{\ll k}(\eta - \fhPi) \psi_{\ll k}(\sigma + \fhPi) e^{it\phi} F(\xi_0,\eta,\sigma)\;d\eta d\sigma\\
    &+ \frac{i}{4\pi} \iint \biggl(\psi_{\sim k}(\eta - \fhPi) \psi_{\lesssim k}(\sigma + \fhPi) + \psi_{\ll k}(\eta - \fhPi) \psi_{\sim k}(\sigma + \fhPi)\biggr) e^{it\phi} F(\xi_0,\eta,\sigma)d\eta d\sigma\\
    &+ \frac{i}{4\pi} \iint \left(1 - \psi_{\lesssim k}(\eta - \fhPi) \psi_{\lesssim k}(\sigma + \fhPi)\right) e^{it\phi} F(\xi_0,\eta,\sigma)\;d\eta d\sigma\\
    =:& \rmI^{(1,4)}_\text{lo} + \rmI^{(1,4)}_\text{stat} + \rmI^{(1,4)}_\text{hi}
\end{split}\end{equation*}
We begin with the bound for $\rmI^{(1,4)}_{\text{lo}}$.  In this region, $|\deles \phi| \sim 2^{2k}$, so we can integrate by parts in frequency to obtain
\begin{equation*}\begin{split}
    \rmI^{(1,4)}_{\text{lo}} 
    &= t^{-1} \hat{T}_{m^1_\text{lo}}(Q^+_{\lesssim k} u_n,Q^+_{\lesssim k} u_n, \overline{Q^+_{\lesssim k}u}_n)(\xi_0)\\
    &- t^{-1} \hat{T}_{m^2_\text{lo}}(L u_n,Q^+_{\lesssim k} u_n, \overline{Q^+_{\lesssim k}u_n})(\xi_0)\\
    &-\bsim
\end{split}\end{equation*}
where 
\begin{equation*}
    m^1_{\text{lo}} = \frac{1}{4\pi}\psi_{\lesssim k}(\xi - \xi_0)\deles \cdot \left(\frac{\deles \phi}{|\deles \phi|^2}\mu(\xi_0,\eta,\sigma) \psi_{\ll k}(\eta - \fhPi) \psi_{\ll k}(\sigma + \fhPi) \right)
\end{equation*}
\begin{equation*}
    m^2_{\text{lo}} = \frac{1}{4\pi}\psi_{\lesssim k}(\xi - \xi_0)\left(\frac{\partial_\eta \phi}{|\deles \phi|^2}\mu(\xi_0,\eta,\sigma) \psi_{\ll k}(\eta - \fhPi) \psi_{\ll k}(\sigma + \fhPi) \right)
\end{equation*}
Both $m^1_\text{lo}$ and $m^2_{\text{lo}}$ are supported on a region of volume $\bigO{2^{3k}}$, and satisfy the symbol bounds
\begin{equation*}\begin{split}
    |\partial_{\xi,\eta,\sigma}^{\alpha} m^1_{\text{lo}}| \lesssim_\alpha& 2^{-(2 + |\alpha|)k}\\
    |\partial_{\xi,\eta,\sigma}^{\alpha} m^2_{\text{lo}}| \lesssim_\alpha& 2^{-(1 + |\alpha|)k}\\
\end{split}\end{equation*}
Thus, by~\Cref{rmk:freq-loc-symbol-bds}, we find
\begin{equation*}\begin{split}
    t^{-1}  \left|\hat{T}_{m^1_\text{lo}}(Q^+_{\lesssim k} u_n,Q^+_{\lesssim k} u_n, \overline{Q^+_{\lesssim k}u_n})(\xi_0) \right| 
    \lesssim& t^{-1} 2^{-2k} \lVert Q^+_{\ll k} u_n \rVert_{\ell^2}^2 \lVert u_n \rVert_{\ell^\infty}\\
    \lesssim& M^3\epsilon^3 t^{-4/3} 2^{-k}\\
t^{-1} \left|\hat{T}_{m^2_\text{lo}}(L u_n,Q^+_{\lesssim k} u_n, \overline{Q^+_{\lesssim k}u_n})(\xi_0) \right| 
    \lesssim& t^{-1} 2^{-k} \lVert Q^+_{\ll k} u_n \rVert_{\ell^2}\lVert nf_n \rVert_{\ell^2} \lVert u_n \rVert_{\ell^\infty}\\
    \lesssim& M^3\epsilon^3 t^{-7/6} 2^{-k/2}
\end{split}	\end{equation*}
so $\rmI^{(1,4)}_{\text{lo}} = \bigO{M^3\epsilon^3 t^{-1} (t^{1/3} 2^k)^{-1/2}}$, which is better than required.  Turning to $\rmI^{(1,4)}_{\text{hi}}$, we write
\begin{equation*}\begin{split}
    \rmI^{(1,4)}_{\text{hi}} =& \sum_{\ell > k + C} \frac{i}{4\pi} \iint  \psi_{\ell}(\eta - \fhPi) \psi_{\leq \ell}(\sigma + \fhPi) e^{it\phi} F(\xi_0,\eta,\sigma)\;d\eta d\sigma\\
    &+ \sum_{\ell > k + C} \frac{i}{4\pi} \iint \psi_{< \ell}(\eta - \fhPi) \psi_{\ell}(\sigma + \fhPi) e^{it\phi} F(\xi_0,\eta,\sigma)\;d\eta d\sigma\\
    =:& \sum_{\ell > k + C} J_{\ell} + \tilde{J}_{\ell}
\end{split}\end{equation*}
Over the support of $J_{\ell}$, we find that $|\deles \phi| \sim 2^{2\ell}$, so we can integrate by parts to obtain
\begin{equation*}\begin{split}
    J_{\ell} =& t^{-1} \hat{T}_{m^1_{\text{hi},\ell}}(Q^+_{\lesssim \ell} u,Q^+_{\lesssim \ell} u, \overline{Q^+_{\lesssim \ell} u})(\xi_0)\\
    &+ \hat{T}_{m^2_{\text{hi},\ell}}(\linProp (n f_n),Q^+_{\lesssim \ell} u, \overline{Q^+_{\lesssim \ell} u})(\xi_0)\\
    &+ \bsim
\end{split}\end{equation*}
where 
\begin{equation*}
    m^1_{\text{hi},\ell} = -\frac{1}{4\pi}\psi_{\lesssim \ell}(\xi - \xi_0)\deles \cdot \left(\frac{\deles \phi}{|\deles \phi|^2}\mu(\xi_0,\eta,\sigma) \psi_{\ll k}(\eta - \fhPi) \psi_{\ll k}(\sigma + \fhPi) \right)
\end{equation*}
\begin{equation*}
    m^2_{\text{hi},\ell} = -\frac{1}{4\pi}\psi_{\lesssim \ell}(\xi - \xi_0)\left(\frac{\partial_\eta \phi}{|\deles \phi|^2}\mu(\xi_0,\eta,\sigma) \psi_{\ll k}(\eta - \fhPi) \psi_{\ll k}(\sigma + \fhPi) \right)
\end{equation*}
Arguing as above, we find that
\begin{equation*}\begin{split}
    t^{-1}\left| \hat{T}_{m^1_{\text{hi},\ell}}(Q^+_{\lesssim \ell} u,Q^+_{\lesssim \ell} u, \overline{Q^+_{\lesssim \ell} u})(\xi_0)\right| 
    \lesssim& M^3\epsilon^3 t^{-4/3} 2^{-\ell}\\
    t^{-1}\left| \hat{T}_{m^1_{\text{hi},\ell}}(\linProp nf_n,Q^+_{\lesssim \ell} u, \overline{Q^+_{\lesssim \ell} u})(\xi_0)\right| 
    \lesssim& M^3\epsilon^3 t^{-7/6} 2^{-\ell/2}
\end{split}\end{equation*}
Similar estimates hold for $\tilde{J}_\ell$.  Thus, summing over $\ell > k + C$, we find that
\begin{equation*}
    \rmI^{(1,4)}_{\text{hi}} = \bigO{M^3\epsilon^3 t^{-1} (t^{1/3} 2^k)^{-1/2}}
\end{equation*}
which is better than required.

It only remains to estimate $\rmI^{(1,4)}_{\text{stat}}$.  Here, it is important to consider the contributions from the two stationary points.  To do so, we write
\begin{equation}
    \rmI^{(1,4)}_\text{stat} = J^{(1)} + J^{(4)} + J^{\text{rem}}
\end{equation}
where
\begin{equation*}
    J^{(j)} = \frac{i}{4\pi} \iint \psi_{\ll k}(\eta - \eta_j) \psi_{\ll k}(\sigma - \sigma_j) e^{it\phi} F(\xi_0,\eta,\sigma)\;d\eta d\sigma
\end{equation*}
for $j = 1,4$, and $J^{\text{rem}} = \rmI^{(1,4)}_{\text{stat}} - J^{(1)} - J^{(4)}$ contains the contribution from away from the stationary points.  We focus on the contributions from the $J^{(j)}$: the remainder term is simpler.  We begin by dividing dyadically in frequency, writing
\begin{equation*}\begin{split}
    J^{(j)} =& \sum_{\ell \ll k} \frac{i}{4\pi} \iint \psi^{[\ell_0]}_{\ell}(\eta - \eta_j) \psi^{[\ell_0]}_{\leq \ell}(\sigma - \sigma_j) e^{it\phi} F(\xi_0, \eta, \sigma)\;d\eta d\sigma \\
    &+ \sum_{\ell \ll k} \frac{i}{4\pi} \iint \psi^{[\ell_0]}_{< \ell}(\eta - \eta_j) \psi^{[\ell_0]}_{\ell}(\sigma - \sigma_j) e^{it\phi} F(\xi_0, \eta, \sigma)\;d\eta d\sigma \\
    =:& \sum_{2^{\ell} \ll 2^k} J^{(j)}_{\ell} + \tilde{J}^{(j)}_{\ell}
\end{split}\end{equation*}
where
\begin{equation}\label{eqn:ell-0-defn}
    2^{\ell_0} \sim t^{-1/3}(t^{1/3}2^k)^{3/7}
\end{equation}
We first bound the sum for $\ell > \ell_0$.  In this case, $|\deles \phi| \sim 2^{k + \ell}$ over the integrand, and we can integrate by parts to obtain
\begin{equation*}\begin{split}
    J^{(j)}_{\ell} 
    =& t^{-1}\hat{T}_{m^{j,1}_{\text{stat},\ell}}(P^{\eta_j}_{\ell_1}u_n, \tilde{P}^{\xi_0 - \eta_j + \sigma_j}_{\ell}u_n, \overline{Q^+_{\sim k} u_n})(\xi_0)\\
    &+ t^{-1}\hat{T}_{m^{j,2}_{\text{stat},\ell}}(L u_n, \tilde{P}^{\xi_0 - \eta_j + \sigma_j}_{\ell}u_n, \overline{Q^{+}_{\sim k}u_n})(\xi_0)\\
    &+\bsim\\
\end{split}	\end{equation*}
where $\tilde{P}^\zeta_{\ell}$ is the Fourier multiplier with symbol $\widehat{\tilde{P}^\zeta_{\ell}}(\xi) = \psi_{\lesssim \ell}(\xi - \zeta)$ and $m^{j,1}_{\text{stat},\ell}$ and $m^{j,2}_{\text{stat},\ell}$ are symbols satisfying
\begin{equation*}\begin{split}
    |\partial_{\xi,\eta,\sigma} m^{j,1}_{\text{stat},\ell}| \lesssim 2^{-(|\alpha| - 2)\ell},&\qquad\qquad \Vol(\supp m^{j,1}_{\text{stat},\ell}) \sim 2^{3\ell}\\
    |\partial_{\xi,\eta,\sigma} m^{j,2}_{\text{stat},\ell}| \lesssim 2^{-(|\alpha| - 1)\ell},&\qquad\qquad \Vol(\supp m^{j,2}_{\text{stat},\ell}) \sim 2^{3\ell}
\end{split}\end{equation*}
A quick calculation similar to the ones performed above yields that
\begin{equation*}\begin{split}
    |J^{(j)}_\ell| \lesssim& M^3\epsilon^3 t^{-3/2}2^{-k/2-\ell} +  M^3\epsilon^3 t^{-4/3} 2^{-k/2} 2^{-\ell/2}
\end{split}\end{equation*}
The $\tilde{J}^{(j)}_\ell$ obey similar bounds, so we can sum to obtain
\begin{equation*}
    \sum_{\ell > \ell_0} |J^{(j)}_\ell| + |\tilde{J}^{(j)}_\ell| \lesssim M^3\epsilon^3 t^{-1} (t^{1/3}2^k)^{-\frac{1}{52}}
\end{equation*}
It only remains to consider the term with $\ell = \ell_0$.  By definition, $\tilde{J}^{(j)}_{\ell_0} = 0$, so we consider $J^{(j)}_{\ell_0}$.  We will estimate this term using the method of stationary phase.  Using the $C^{1/2}$ bounds on $\hat{f}$ and the fact that ${|\mu(\xi_0, \eta_j, \sigma_j)| \sim 2^k}$ for $j = 1,4$,	we find that
\begin{equation*}
    |F(\xi_0,\eta,\sigma) - F(\xi_0, \eta_j, \sigma_j)| \lesssim M^3 \epsilon^3 \left((|\eta - \eta_j| + |\sigma - \sigma_j|) +  t^{1/6}2^k (|\eta - \eta_j| + |\sigma - \sigma_j|)^{1/2}\right)
\end{equation*}
so
\begin{equation*}
    J^{(j)}_{\ell_0} = \frac{i}{4\pi}F(\xi,\eta_j, \sigma_j) \iint \psi^{[\ell_0]}_{\ell}(\eta - \eta_j) \psi^{[\ell_0]}_{\leq \ell}(\sigma - \sigma_j) e^{it\phi} \;d\eta d\sigma + \bigO{M^3\epsilon^3\left(2^{3\ell_0} + t^{1/6}2^k 2^{5/2\ell_0}\right)}
\end{equation*}
Now, since $P_j = (\eta_j, \sigma_j)$ is a stationary point of $\phi$, we can write
\begin{equation*}
    \phi(\xi_0,\eta,\sigma) = \phi_j + Q_j (\eta - \eta_j, \sigma - \sigma_j) + \bigO{|\eta - \eta_j|^3 + |\sigma - \sigma_j|^3}
\end{equation*}
where $Q_j$ is the quadratic form associated to $\Hess \phi(\xi_0, \eta_j, \sigma_j)$ and $\phi_j = \phi(\xi_0,\eta_j,\sigma_j)$ (see~\Cref{tab:stationary-point-data}).  Using the bound $|e^{ix} - 1| \leq |x|$, we can 
\begin{equation*}\begin{split}
    J^{(j)}_{\ell_0} =& \frac{i}{4\pi} F(\xi,\eta_j, \sigma_j) e^{it\phi_j} \iint \psi_{\leq \ell_0}(\eta - \eta_j) \psi_{\leq \ell_0}(\sigma - \sigma_j) e^{itQ_j(\eta - \eta_j, \sigma - \sigma_j)} \;d\eta d\sigma\\
    &+ \bigO{M^3\epsilon^3 \left(2^{3\ell_0} + t^{1/6}2^{5/2\ell_0}2^k + t 2^{5\ell_0}2^k\right)}
\end{split}\end{equation*}
Finally, setting $\zeta = 2^{\ell_0}(\eta - \eta_j)$, $\omega = 2^{\ell_0}(\sigma - \sigma_j)$ and applying the stationary phase lemma, we find that
\begin{equation*}\begin{split}
    \iint \psi_{\leq \ell_0}(\eta - \eta_j) \psi_{\leq \ell_0}(\sigma - \sigma_j) e^{itQ_j(\eta - \eta_j, \sigma - \sigma_j)} \;d\eta d\sigma =& 2^{-2\ell_0}\iint \psi_{0}(\zeta) \psi_{0}(\omega) e^{it2^{-2\ell_0}Q_j(\zeta, \omega)} \;d\zeta d\omega\\
    =& \frac{2\pi}{|D_j|^{1/2} t}e^{i\frac{\pi}{4} S_j} + \bigO{t^{-2}2^{-2\ell_0-2k}}
\end{split}\end{equation*}
where $D_j$ is the determinant of the Hessian at $P_j$ (see~\Cref{tab:stationary-point-data}).  Inserting this into the expression for $J^{(j)}_{\ell_0}$ and recalling the definition of $\ell_0$, we find that
\begin{equation}\label{eqn:I-bound-st-phase}
    J^{(j)}_{\ell_0} = \frac{i\cos(\eta_j)}{2|D_j|^{1/2}t}e^{i\frac{\pi}{2} S_j} \hat{f}(\eta_j) \hat{f}(\xi - \eta_j - \sigma_j) \overline{\hat{f}}(-\sigma_j) + \bigO{M^3\epsilon^3 t^{-1}(2^k t^{1/3})^{-\stPhErr}}
\end{equation}
as required.

	\bibliography{sources}{} 
	\bibliographystyle{plain}

\end{document}